\tikzset{big arrow/.style={
    -Stealth,line cap=round,line width=1mm,
    shorten <=1mm,shorten >=1mm}}
\newcommand{\SmallMatrix}[1]
{\left(\begin{smallmatrix}#1\end{smallmatrix}\right)}
\newcommand{\0}[2]{{}_{#2}#1}
\newcommand{\ii}{\boldsymbol{i}}
\DeclareMathOperator{\Aut}{Aut}
\newcommand{\ad}{{\text{\upshape{ad}}}}
\newcommand{\Zl}[1]{\mathrm{Z}^{#1}}
\newcommand{\Bd}[1]{\mathrm{B}^{#1}}
\newcommand{\Ho}[1]{\mathrm{H}^{#1}}
\numberwithin{equation}{section}
\theoremstyle{plain}
\newtheorem{theorem}[equation]{Theorem}
\newtheorem{proposition}[equation]{Proposition}
\newtheorem{lemma}[equation]{Lemma}
\newtheorem{corollary}[equation]{Corollary}
\newtheorem{main-theorem}[equation]{Main Theorem}
\theoremstyle{definition}
\newtheorem{remark}[equation]{Remark}
\newtheorem{definition}[equation]{Definition}
\newtheorem{notation}[equation]{Notation}
\newtheorem{construction}[equation]{Construction}
\newtheorem{subsec}[equation]{}
\newcommand{\sN}{{\mathcal{N}}}
\newcommand{\into}{\hookrightarrow}
\newcommand{\isoto}{\stackrel{\sim}{\to}}
\newcommand{\labelto}[1]{\xrightarrow{\makebox[1.5em]{\scriptsize ${#1}$}}}
\newcommand{\longisoto}{\labelto\sim}
\newcommand{\hs}{\kern 0.8pt}
\newcommand{\hssh}{\kern 1.2pt}
\newcommand{\hshs}{\kern 1.6pt}
\newcommand{\hssss}{\kern 2.0pt}
\newcommand{\hm}{\kern -0.8pt}
\newcommand{\hmm}{\kern -1.2pt}
\DeclareMathOperator{\Lie}{Lie}
\newcommand{\ttau}{\tau}
\newcommand{\at}{\mathbin{\bar*_\theta}}
\newcommand{\ath}{\mathbin{*_\theta}}
\newcommand{\nb}{{\bar n}}
\newcommand{\tb}{{\bar t}}
\newcommand{\thG}{{\0\GG\theta}}
\newcommand{\thT}{{\0\TT\theta}}
\newcommand{\R}{{\mathbb{R}}}
\newcommand{\Z}{{\mathbb{Z}}}
\renewcommand{\C}{{\mathbb{C}}}
\newcommand{\Q}{{\mathbb{Q}}}
\newcommand{\id}{{\rm id}}
\newcommand{\GG}{{\bf G}}
\newcommand{\NN}{{\bf N}}
\newcommand{\TT}{{\bf T}}
\newcommand{\WW}{{\bf W}}
\newcommand{\X}{{\sf X}}
\newcommand{\tthat}{{\hat\theta}}
\DeclareMathOperator{\Hom}{Hom}
\newcommand{\pp}{p}
\newcommand{\qq}{q}
\newcommand{\GL}{{\bf{GL}}}
\newcommand{\SL}{{\bf{SL}}}
\newcommand{\SO}{{\bf{SO}}}
\newcommand{\PGL}{{\bf{PGL}}}
\DeclareMathOperator{\Inn}{Inn}
\DeclareMathOperator{\Gal}{Gal}
\def\Hquat{{\mathbb{H}}}
\newcommand\two[2]{\underset{\ds#2}{#1}}
\DeclareMathOperator{\Ad}{Ad}
\def\half{{\tfrac{1}{2}}}
\def\ihalf{{\tfrac{\ii}{2}}}
\newcommand{\hh}{\kern 1.0pt}
\newcommand{\ssc}{{\text{\upshape{sc}}}}
\def\Dtil{{\widetilde{D}}}
\def\Smtil{{\widetilde{\Sm}}}
\newcommand{\vvk}{{(k)}}
\newcommand{\ds}{\displaystyle}
\newcommand{\gf}[2]{\genfrac{}{}{0pt}{}{#1}{#2}}
\def\even{{\text{even}}}
\def\odd{{\text{odd}}}
\newcommand{\NNt}{\NN_\tau}
\newcommand{\Nt}{N_\tau}
\DeclareMathOperator{\inn}{inn}
\newcommand{\Ghat}{{\widehat G}}
\newcommand{\AAA}{{\sf A}}
\newcommand{\BBB}{{\sf B}}
\newcommand{\CCC}{{\sf C}}
\newcommand{\DDD}{{\sf D}}
\newcommand{\EEE}{{\sf E}}
\newcommand{\FFF}{{\sf F}}
\newcommand{\GGG}{{\sf G}}
\newcommand{\Wtil}{{\widetilde{W}}}
\newcommand{\Wotil}{{\widetilde{W}_0}}
\newcommand{\BRD}{{\rm BRD}}
\newcommand{\brd}{{\star}}
\newcommand{\Sm}{S}
\newcommand{\Wohat}{\widehat{W}_0}
\newcommand{\Om}{\mathcal{O}}
\newcommand{\Qm}{\mathcal{Q}}
\newcommand{\Rt}{R}
\newcommand{\Rtbar}{{\overline R}}
\newcommand{\wt}{\widetilde}
\newcommand{\tl}{\mathfrak{t}}
\newcommand{\ttl}{\boldsymbol{\tl}}
\newcommand{\hl}{\mathfrak{h}}
\newcommand{\gl}{\mathfrak{g}}
\newcommand{\bl}{\mathfrak{b}}
\newcommand{\zl}{\mathfrak{z}}
\newcommand{\upgam}{{\hs^\gamma\hm}}
\newcommand{\D}{D}
\newcommand{\wtil}{{\widetilde w}}
\newcommand{\what}{{\widehat w}}
\newcommand{\Ga}{\Gamma}
\newcommand{\Km}{{\mathcal K}}
\newcommand{\Zm}{{\mathcal Z}}
\newcommand{\Nm}{{\mathcal N}}
\newcommand{\orb}{{\text{\upshape orb}}}
\newcommand{\eee}{{\text{\tiny\sf E}}}
\newcommand{\aaa}{{\text{\tiny\sf A}}}
\newcommand{\Smbar}{{\overline\Sm}}
\newcommand{\Dbar}{{\overline D}}
\newcommand{\cc}{\raise 1.7pt \hbox{\Tiny{$\bullet$}}}
\newcommand{\htau}{{\hat\tau}}
\newcommand{\sa}{\varphi}
\newcommand{\e}[1]{e_{#1}}
\newcommand{\tto}{t_{\theta}}
\newcommand{\Exp}{{\mathcal{E}}}
\newcommand{\Exphat}{{\widehat{\mathcal E}}}
\newcommand{\tw}{{\mathcal T}}
\newcommand{\der}{{\text{\upshape der}}}
\newcommand{\GmC}{\C^\times}
\newcommand{\lb}{\left\langle}
\newcommand{\rb}{\right\rangle}
\title[Real Galois cohomology]
{Galois cohomology of real semisimple groups\\ via Kac labelings}
\author{Mikhail Borovoi and Dmitry A. Timashev}
\address{Borovoi: Raymond and Beverly Sackler School of Mathematical Sciences,
Tel Aviv University, 6997801 Tel Aviv, Israel}
\email{borovoi@tauex.tau.ac.il}
\address{Timashev:
Lomonosov Moscow State University, Faculty of Mechanics and
Mathematics, Department of Higher Algebra, 119991 Moscow, Russia}
\email{timashev@mech.math.msu.su }
\thanks{Borovoi was partially supported
by the Hermann Minkowski Center for Geometry
and by the Israel Science Foundation (grant 870/16).
Timashev was partially supported by the Russian Foundation
for Basic Research (grant 20-01-00091).}
\keywords{Real reductive group, real semisimple group, real Galois cohomology, Kac labeling}
\subjclass[2010]{%
11E72
, 20G10
, 20G20
}
\dedicatory{To the memory of \`Ernest Borisovich  Vinberg}
\begin{document}


\begin{abstract}
For a connected semisimple group $\GG$ over the field of real numbers $\R$,
using a method of Onishchik and Vinberg,
we compute the first Galois cohomology set $\Ho1(\R,\GG)$
in terms of Kac labelings of the affine Dynkin diagram of $\GG$.
\end{abstract}


\maketitle

\tableofcontents

\setcounter{section}{-1}


\section{Introduction}

\begin{subsec}
Let $\GG$ be a linear algebraic group defined over the field of real numbers $\R$.
For the definition of the  first (nonabelian)
Galois cohomology set $\Ho1(\R,\GG)$ see Serre's book \cite[Section I.5]{Serre};
see also Section \ref{s:anti-regular} below.
Galois cohomology can be used to answer many natural questions
(on classification of real forms of algebraic varieties with additional structure,
on classification of real orbits in a homogeneous space, etc.);
see Serre \cite[Section III.1]{Serre}.
In this article we shall consider the case when $\GG$ is connected and semisimple.
Note that it is very interesting to know the Galois cohomology
of a semisimple or reductive group over a {\em number field,}
but this actually reduces to a calculation of Galois cohomology over $\R$
(and  a calculation of a  certain abelian group);
see Sansuc \cite[Corollary 4.5]{Sansuc}
and  Borovoi \cite[Theorem 5.11]{Borovoi-Memoir}.
\end{subsec}

\begin{subsec}
In this article, by a semisimple or reductive algebraic group we always mean a {\em connected}
semisimple or reductive algebraic group, unless otherwise specified.
A real algebraic group $\GG$ is called {\em $\R$-simple}
if it has no positive-dimensional normal algebraic subgroups defined over $\R$.
A real algebraic group $\GG$ is called {\em absolutely simple}
if it remains simple after extension of scalars from $\R$ to $\C$.

The Galois cohomology sets $\Ho1(\R,\GG)$ of the classical groups are well known.
Recently the sets $\Ho1(\R,\GG)$ were computed for ``most''
of the {\rm absolutely  simple} $\R$-groups  by Adams and Ta{\"\i}bi \cite{AT},
in particular, for all {\em simply connected} absolutely simple $\R$-groups
by Adams and Ta{\"\i}bi \cite{AT} and by Borovoi and Evenor \cite{BE}.
A {\em simply connected} $\R$-simple group $\GG$ that is {\em not} absolutely simple
is isomorphic to a group obtained by the Weil restriction of scalars from
a simply connected simple $\C$-group.
It follows that $\Ho1(\R,\GG)=1$ in this case.
Now if $\GG$ is any {\em simply connected} semisimple $\R$-group,
then $\GG$ is the direct product of its (simply connected) $\R$-simple normal subgroups:
\[\GG=\prod_i\GG_i\hs,\quad\text{and hence}\quad  \Ho1(\R, \GG)=\prod_i\Ho1(\R,\GG_i),\]
where we know $\Ho1(\R,\GG_i)$ for each
$\GG_i$\hs, see above.
Thus  $\Ho1(\R, \GG)$ is known for all {\em simply connected semisimple} $\R$-groups $\GG$.

Kac \cite{Kac} used infinite-dimensional Lie algebras
to classify the conjugacy classes of automorphisms of finite order
of a simple Lie algebra over the field of complex numbers $\C$
in terms of what is now called {\em Kac diagrams.}
For another method giving the same description of the conjugacy classes
of automorphisms of finite order  in terms of Kac diagrams,
see Onishchik and Vinberg \cite[Section 4.4]{OV}, and also
Gorbatsevich, Onishchik, and Vinberg \cite[Section 3.3]{OV2};
the authors write that their approach goes back to Gantmacher \cite{Gant}.
In particular, these methods and results  give a classification of involutions;
see \cite[Section 5.1.5]{OV} and \cite[Section 4.1.4]{OV2}.
A slight modification of these methods and results gives $\Ho1(\R,\GG)$
for all {\em absolutely simple, adjoint} $\R$-groups $\GG$.
Arguing as above, we see that   $\Ho1(\R, \GG)$ is actually known for all {\em adjoint semisimple} $\R$-groups $\GG$.

In the present article we consider  a general semisimple $\R$-group $\GG$,
not necessarily simply connected or adjoint.
We use the method of Onishchik and Vinberg
to compute $\Ho1(\R,\GG)$ in terms of Kac diagrams,
or, as we say, in terms of {\em Kac labelings} of the affine Dynkin diagram of $\GG$.
Our main result is Theorem \ref{t:main-ss}.
It gives a simple uniform combinatorial  description of the Galois cohomology set  $\Ho1(\R,\GG)$ for any semisimple $\R$-group $\GG$.
This permits one to describe easily {\em additional structures} on $\Ho1(\R,\GG)$, see below.
Using this result, we shall compute the Galois cohomology of  {\em reductive} $\R$-groups  in a forthcoming article.
\end{subsec}

\begin{subsec}
In general, the Galois cohomology set $\Ho1(\R,\GG)$
of a linear algebraic $\R$-group $\GG$
has no natural group structure.
It has only a distinguished point, the {\em neutral element}; see Subsection \ref{ss:H1}.
We see that $\Ho1(\R,\GG)$ is just a
pointed set,
and one is tempted to conclude that it has no other structure.

However, these pointed sets  $\Ho1(\R,\GG)$ {\em together} have an important additional structure:  \emph{functoriality}.
Namely, the correspondence
$\GG\rightsquigarrow  \Ho1(\R,\GG)$
is a functor.
This means that for any homomorphism of algebraic $\R$-groups
$\varphi\colon \GG\to \GG'$
we have the induced morphism of pointed sets
\[\varphi_*\colon  \Ho1(\R,\GG)\to  \Ho1(\R,\GG').\]
For  applications it is important to know $\varphi_*$.

Our description of  $\Ho1(\R,\GG)$ for a semisimple $\R$-group $\GG$ in Main Theorem \ref{t:main-ss} permits one
to compute easily the map $\varphi_*$ in the case when $\varphi$ is a {\em normal} homomorphism,
that is, when $\varphi(\GG)$ is normal in $\GG'$.
See Section~\ref{s:twisting}, where the case of an {\em isogeny} $\GG\to\GG'$ is treated in detail.

Another additional structure on $\Ho1(\R,\GG)$ is \emph{twisting}.
Let $\GG$ be an algebraic $\R$-group and  $a\in \Zl1(\R,\GG)$ be a cocycle.
Consider the inner twist  $_a\GG$ of $\GG$, which is  another $\R$-group; see Subsection \ref{ss:twist-def}.
There is a canonical bijection $\tw_a\colon \Ho1(\R,\hs_a\GG)\to \Ho1(\R,\GG)$
sending the neutral element of $\Ho1(\R,\hs_a\GG)$ to the class of $a$ in $\Ho1(\R,\GG)$.
For a  semisimple $\R$-group $\GG$, using our description of $\Ho1(\R,\GG)$ in Main Theorem \ref{t:main-ss},
we can easily compute the map $\tw_a$; see   Proposition \ref{p:inner-twist}.

For many applications of Galois cohomology
(for instance, for classification of real orbits in real loci of complex homogeneous spaces)
it is important to know explicit cocycles representing each cohomology class in $\Ho1(\R,\GG)$.
Our Main Theorem \ref{t:main-ss} gives such representatives for semisimple $\GG$.
\end{subsec}

\begin{subsec}
We describe the contents of the article.

Sections \ref{s:anti-regular}--\ref{s:prelim-compact} contain preliminary material on
real algebraic groups, Galois cohomology over $\R$, and structure, automorphisms, and real forms of reductive groups.
In Section~\ref{s:red-to-torus} we explain how to reduce the computation of Galois cohomology
of a reductive $\R$-group $\GG$ to a calculation with a maximal compact subtorus $\TT_0$ of~$\GG$.
In Section~\ref{s:M} we identify $\Ho1(\R,\GG)$ with the orbit set for the action
of a certain subgroup of the normalizer of $\TT_0$
on the set $(T_0)_2$ of elements in $\TT_0$ of order $\le2$ by twisted conjugation.
We get rid of the twist in Section~\ref{s:shifting} by shifting $(T_0)_2$
to a disconnected group with identity component $\GG$.

Section~\ref{s:lattices} collects some standard facts about restricted roots of $\TT_0$ and the (twisted) affine Weyl group,
to be used in the next section. In Section~\ref{s:logarithm} we
reduce our problem to a description of the orbit set for a certain discrete group
of affine transformations of the Lie algebra of $\TT_0$.
We obtain a description for this orbit set in Sections \ref{s:inner}--\ref{s:nas}
for simple $\R$-groups and in Section~\ref{s:sqroots} for an arbitrary semisimple $\R$-group.

In Section~\ref{s:mt-ss} we deduce our main result describing $\Ho1(\R,\GG)$ for semisimple $\GG$ in terms of Kac labelings.
Functoriality of our description
is discussed in Section~\ref{s:twisting}.
Sections \ref{s:E7}--\ref{s:E7xSL(m,H)} contain examples of explicit computation of Galois cohomology by our method.
\end{subsec}

\begin{subsec}
{\bf Notation and conventions}\label{not-conv}
\begin{itemize}
\item[\cc] $\Z$ denotes the ring of integers.
\item[\cc] $\Q$, $\R$, and $\C$ denote the fields of rational numbers,
   of real numbers, and of complex numbers, respectively.
\item[\cc] $\ii\in\C$ is such that $\ii^2=-1$. (Our results do not depend on the choice of $\ii$.)
\item[\cc] $\Gamma=\Gal(\C/\R)=\{1,\gamma\}$, the Galois group of $\C$ over $\R$,
           where $\gamma$ is the complex conjugation.
\item[\cc] We denote real algebraic groups  by boldface letters $\GG$, $\TT$, \dots,
           their complexifications by respective Roman (non-bold) letters $G=\GG\times_\R \C$, $T=\TT\times_\R\C$, \dots,
           and the corresponding complex Lie algebras by respective lowercase Gothic letters $\gl=\Lie G$, $\tl=\Lie T$, \dots.
\item[\cc] For a homomorphism $\varphi:G\to H$ of algebraic (or Lie) groups,
           the differential at the unity $d\varphi:\gl\to\hl$ is a homomorphism of Lie algebras.
           By abuse of notation, we often write $\varphi$ instead of $d\varphi$.
\item[\cc] $G^0$ denotes the identity component of an algebraic (or Lie) group $G$.
\item[\cc] $\Inn G$ denotes the group of inner automorphisms of a group $G$.
\item[\cc] $\inn(g)\colon x\mapsto gxg^{-1}$ denotes the inner automorphism
of a group $G$ corresponding to an element $g$ of $G$.
\item[\cc] $\GG$ is a connected reductive $\R$-group, unless otherwise specified.
\item[\cc]  $Z(\GG)$ denotes the center of $\GG$.
\item[\cc] $\Zm_\GG(\ )$ denotes the centralizer in $\GG$ of the set in the parentheses.
\item[\cc] $\Nm_\GG(\ )$ denotes the normalizer in $\GG$ of the group in the parentheses.
\item[\cc] $\GG^\ad:=\GG/Z(\GG)$  denotes the corresponding adjoint group.
\item[\cc] $\GG^\der=[\GG,\GG]$, the derived group of $\GG$.
\item[\cc] $\GG^\ssc$ denotes the universal cover of the connected semisimple group $\GG^\der$.
\item[\cc] $\TT\subset \GG$ is a maximal torus (defined over $\R$).
\item[\cc] $\TT^\ad:=\TT/Z(\GG)$ denotes the image of $\TT$ in $\GG^\ad$, which is a  maximal torus in $\GG^\ad$.
\item[\cc] $\TT^\ssc$ denotes the preimage of $\TT$ in $\GG^\ssc$, which is a maximal torus in $\GG^\ssc$.
\item[\cc] $\X^*(T)=\Hom(T,\GmC)$, the character group of $T$, where $\Hom$
          denotes the group of homomorphisms of {\em algebraic $\C$-groups}.
          We regard $\X^*(T)$ as a lattice in the dual space $\tl^*$ of $\tl$,
          in view of the canonical embedding ${\X^*(T)\into\tl^*}$, $\chi\mapsto d\chi$.
\item[\cc] $\X_*(T)=\Hom(\GmC,T)$, the cocharacter group of $T$.
          We regard $\X_*(T)$ as a lattice in $\tl$, in view of the canonical embedding $\X_*(T)\into\tl$, $\nu\mapsto d\nu(1)$.
\item[\cc] $A_2$ denotes the set of elements of order dividing $2$ in a subset $A$ of some group.
\item[\cc] $A_2^a$ denotes the set of elements of $A$ with square $a$.
\item[\cc] By an involution (of an algebraic group, based root datum etc.) we mean
    an automorphism with square identity. In particular, we regard the identity automorphism as the trivial involution.
\end{itemize}
\end{subsec}

\noindent{\bf Acknowledgements.} \
This article was inspired by e-mail correspondence
of the first-named author with E.~B.~Vinberg  in 2008,
where Vinberg proposed a formula for $\Ho1(\R,\GG)$ in terms of Kac labelings of the extended Dynkin diagram of $\GG$
in the case when $\GG$ is an inner form of a simply connected compact $\R$-group; see Corollary \ref{c:main-sc}.
The authors are grateful to Benjamin McKay for developing a new version of his package {\tt dynkin-diagrams}
especially for this article and to Zinovy Reichstein for helpful comments.
The first-named author worked on this article, in particular, during his visit
to the Institut des Hautes \'Etudes Scientifiques (IHES) in the fall of  2019,
and he is grateful to this institute for support and excellent working conditions.

\section{Real algebraic groups and Galois cohomology}
\label{s:anti-regular}

\begin{subsec}
Let $\GG$ be a real linear algebraic group.
In the coordinate language, the reader may regard $\GG$ as
a subgroup in the general linear group $\GL_n(\C)$ (for some integer~$n$)
defined by polynomial equations with {\em real} coefficients in the matrix entries;
see Borel \cite[Section 1.1]{Borel-66}.
More conceptually, the reader may assume that $\GG$ is an affine group scheme
of finite type over $\R$; see  Milne \cite[Definition 1.1]{Milne}.
With any of these two equivalent  definitions, $\GG$ defines a covariant functor
\begin{equation*}
A\mapsto \GG(A)
\end{equation*}
from the category of commutative unital $\R$-algebras to the category of groups.
Applying this functor to the $\R$-algebra $\R$, we obtain a real Lie group $\GG(\R)$.
Applying this functor  to the $\R$-algebra $\C$ and to the morphism of $\R$-algebras
\[\gamma\colon \C\to\C, \quad z\mapsto \bar z\quad\text{for }z\in \C,\]
we obtain a complex Lie group $\GG(\C)$ together with an anti-holomorphic involution
$\GG(\C)\to \GG(\C),$
which will be denoted by $\sigma_\GG$.
The Galois group $\Gamma$ naturally acts on $\GG(\C)$; namely, the complex conjugation $\gamma$ acts by $\sigma_\GG$.
We have $\GG(\R)=\GG(\C)^\Gamma$ (the subgroup of fixed points).

We shall consider the linear algebraic group $\GG\times_\R\C$
obtained from $\GG$ by extension of scalars from $\R$ to $\C$.
In this article we shall denote $\GG\times_\R\C$ by $G$,
the same Latin  letter, but non-boldface
(though the standard notation is $\GG_\C$).
By abuse of notation we shall identify $G$ with $\GG(\C)$;
in particular, we shall write $g\in G$ meaning that $g\in \GG(\C)$.

Since $G$ is an affine group scheme over $\C$, we have the ring of regular function
$\C[G]=\R[\GG]\otimes_\R\C.$
Our anti-holomorphic involution $\sigma_\GG$ of $\GG(\C)$ is {\em anti-regular} in the following sense:
when acting on the ring of holomorphic functions on $G$, it preserves the subring $\C[G]$.
An anti-regular involution of $G$ is called also  a {\em real structure on} $G$.
\end{subsec}

\begin{remark}
If $G$ is a {\em reductive} algebraic group over $\C$ (not necessarily connected),
then any anti-holomorphic involution of $G$
is anti-regular. The hypothesis that $G$ is reductive is necessary: the abelian algebraic group $\C\times\C^\times$
has the  anti-holomorphic involution $(z,w)\mapsto (\bar{z},\exp(i\bar{z})\bar{w})$
that is not anti-regular.
See  Adams and Ta\"\i bi \cite[Lemma 3.1]{AT} and Cornulier \cite{Cornulier}.
\end{remark}

\begin{subsec}
A morphism of real linear algebraic groups $\GG\to\GG'$ induces a morphism of pairs
$(G,\sigma_\GG)\to (G',\sigma_{\GG'})$.
In this way we obtain a functor
$\GG\mapsto(G,\sigma_\GG)$.
By Galois descent, see Serre \cite[V.4.20,  Corollary 2 of Proposition 12]{Serre-AGCF} or Jahnel \cite[Theorem 2.2]{Jahnel},
this functor is an equivalence of categories.
In particular, any pair $(G,\sigma)$, where $G$ is a complex linear algebraic group and $\sigma$ is a real structure on $G$,
is isomorphic to a pair coming from a real linear algebraic group $\GG$,
and any morphism of pairs $(G,\sigma)\to (G',\sigma')$ comes
from  a unique  morphism of the corresponding real algebraic groups.

From now on, when mentioning a {\em real} algebraic group $\GG$, we shall actually work with a pair $(G,\sigma)$,
where $G$ is a {\em complex} algebraic group and $\sigma$ is a real structure on $G$.
We shall write $\GG=(G,\sigma)$.
We shall shorten ``real linear algebraic group'' to ``$\R$-group''.
\end{subsec}

\begin{subsec}\label{ss:H1}
Let  $A$ be a $\Ga$-group,
that is, an abstract group (not necessarily abelian) endowed with an action of $\Ga=\Gal(\C/\R)$.
We denote this action by putting a superscript on the left, so that $\gamma$ sends $a\in A$ to $\upgam a$.
We consider the {\em first cohomology pointed set} $\Ho1(\Ga,A)$;
see Serre \cite[Section I.5]{Serre}.
Recall that the group $A$ acts on the set of $1$-{\em cocycles}
\[\Zl1(\Ga, A)=\{z\in A\mid z\cdot\upgam z=1\}\]
by ``twisted conjugation'':
\[a\colon z\mapsto a\cdot z\cdot \upgam a^{-1}\quad\text{for } a\in A,\ z\in \Zl1(\Ga,A).\]
By definition, $\Ho1(\Ga, A)$ is the set of orbits of $A$ in $\Zl1(\Ga,A)$.
In general  $\Ho1(\Ga, A)$  has no natural group structure, but it has a {\em neutral element},
the class of the cocycle $1\in \Zl1(\Ga,A)\subseteq A$.

If the group $A$ is abelian, we consider the  subgroup of  $1$-{\em coboundaries}
\[\Bd1(\Ga, A)=\{a\cdot\upgam a^{-1}\mid a\in A\} \]
of the abelian group $\Zl1(\Ga,A)$.
 Then we have
\[\Ho1(\Ga, A)=\Zl1(\Ga,A)/\Bd1(\Ga,A).\]
Thus $\Ho1(\Ga,A)$ is naturally an abelian group in this case.

Let $\GG=(G,\sigma)$ be a real algebraic group.
The Galois group $\Gamma=\{1,\gamma\}$ acts on $G=\GG(\C)$
(namely, $\gamma$ acts as $\sigma$), and
the \emph{first Galois cohomology set} of $\GG$ is defined as
\[\Ho1(\R,\GG)=\Ho1(\Gamma,G).\]
\end{subsec}

\section{Complex reductive groups}
\label{s:prelim-complex}

\begin{subsec}
Let $G$ be a (connected) reductive group over $\C$, and let $T\subset G$ be a maximal torus.
We denote by
\[ X=\X^*(T)\quad\text{and}\quad X^\vee=\X_*(T) \]
the character and cocharacter lattices of $T$, respectively.
We have a canonical pairing
\begin{equation*}
\langle\,\hs,\hs\rangle: X\times X^\vee\to\Z, \quad
(\chi,\nu)\mapsto \langle\chi,\nu\rangle
\ \text{ for }\chi\in X,\ \nu\in X^\vee,
\end{equation*}
where $\chi\circ \nu=(z\mapsto z^{\langle\chi,\nu\rangle}\hs)$ for $z\in\GmC$.
Under the canonical embeddings $X\into\tl^*$ and $X^\vee\into\tl$ of \ref{not-conv},
this pairing is compatible with the pairing between $\tl^*$ and $\tl$,
and makes the lattices $X$ and $X^\vee$ dual to each other.
\end{subsec}

\begin{subsec}
We consider the root system $\Rt=\Rt(G,T)$.
Then
\[ \gl=\tl\oplus\bigoplus_{\alpha\in \Rt}\gl_{\alpha}\hs,\]
where $\gl_{\alpha}$ is the eigenspace corresponding to the root $\alpha$.
Let $B\subset G$ be a Borel subgroup containing $T$; then
\[ \bl=\tl\oplus\bigoplus_{\alpha\in \Rt_+}\gl_{\alpha}\]
(this formula defines the set of positive roots $\Rt_+\subset \Rt$).
Let $\Sm=\Sm(G,T,B)\subseteq \Rt_+$ denote the corresponding basis of $\Rt$ (system of simple roots).
We denote by $\D=\D(G)=\D(G,T,B)$ the Dynkin diagram of $G$;
then $\Sm$ is identified with the set of vertices of $\D(G,T,B)$.
We do not assume that the Dynkin diagram $\D(G)$ is connected.

The Dynkin diagram $\D(G)$ does not determine $G$ uniquely up to an isomorphism.
We need the notion of the  based root datum of $G$.

Let $G$ be a reductive group over $\C$, $T\subset G$ be a maximal torus,
and $B\subset G$ be a Borel subgroup containing $T$.
Let
\[\BRD(G)=\BRD(G,T,B)=(X,X^\vee,\Rt,\Rt^\vee,\Sm,\Sm^\vee)\]
denote the {\em based root datum of $G$}.
Here $X=\X^*(T)$ is the character group of $T$,
$X^\vee=\X_*(T)$ is the cocharacter group of $T$,
$\Rt=\Rt(G,T)\subset X$ is the root system,
$\Rt^\vee\subset X^\vee$ is the coroot system,
$\Sm=\Sm(G,\,T,\,B)\subset \Rt$ is the basis of $\Rt$ defined by $B$,
$\Sm^\vee\subset \Rt^\vee$ is the corresponding basis of $\Rt^\vee$.
We have a canonical pairing
$X\times X^\vee\to\Z$
and compatible canonical bijections
$\Rt\leftrightarrow \Rt^\vee$, $\Sm\leftrightarrow \Sm^\vee$.
See Springer \cite[Sections 1 and 2]{Springer-Corvallis} for details.
\end{subsec}

\begin{subsec}\label{ss:QP}
Recall the notions of the lattices of roots, weights, coroots and coweights
for an abstract (not necessarily reduced) root system $R$:
the root lattice $Q=Q(R)$ is spanned by all roots in $R$, the coroot lattice $Q^\vee=Q(R^\vee)$
is spanned by all coroots in the dual root system $R^\vee$,
and the weight lattice $P=P(R)$, resp.\ the coweight lattice $P^\vee=P(R^\vee)$,
is the dual lattice of $Q^\vee$, resp.\ of $Q$; see Bourbaki \cite[Section VI.1.9]{Bourbaki} for details.

With any basis $S$ of $R$ (that is, a set of simple roots) there is canonically associated a basis $S^\vee$ of $R^\vee$,
called the set of simple coroots relative to $S$; see \cite[Section VI.1.5, Remark 5]{Bourbaki}.
The sets $S$ and $S^\vee$ are bases of the lattices $Q$ and $Q^\vee$, respectively.
Note that for a non-reduced root system $R$ the set $S^\vee$ is \emph{not} the set of coroots $\alpha^\vee$ over all $\alpha\in S$.
The set of fundamental weights (resp.\ coweights) relative to $S$ is the basis of $P$
(resp.\ of $P^\vee$) dual to $S^\vee$, resp.\ to $S$; see \cite[Section VI.1.10]{Bourbaki}.
(Note that in loc.~cit.\ the fundamental (co)weights are defined
only for a reduced root system, though this restriction is unnecessary.)
\end{subsec}

\begin{subsec}\label{ss:brd}
We recall a construction of the canonical homomorphism
\begin{equation*}
\Aut G\,\to\, \Aut\BRD(G),\quad \sa\mapsto \sa_\brd\,\text{ for }\sa\in \Aut G
\end{equation*}
with kernel $\Inn G$.
Let $\sa\in{\Aut G}$.
Consider the maximal torus $\sa(T)\subseteq G$ and the Borel subgroup $\sa(B)\subseteq G$ containing $\sa(T)$.
Then there exists $g\in G$ such that
\begin{equation}\label{e:phi-T-B}
g\cdot \sa(T)\cdot g^{-1}=T\quad\text{and}\quad g\cdot \sa(B)\cdot g^{-1}=B.
\end{equation}
Moreover, if $g'\in G$ is another such element, then $g'=tg$ for some $t\in T$.
The automorphism $\inn(g)\circ\sa$ of $G$ preserves $T$ and $B$,
and thus induces an automorphism $\sa_\brd$ of $\BRD(G,T,B)$.
Namely,
\begin{align*}
(\sa_\brd\chi)(t) &=\chi(\sa^{-1}(g^{-1}tg))\quad \text{for } \chi\in X,\ t\in T,&\\
(\sa_\brd\nu)(z) &= g\hs\sa(\nu(z))g^{-1}\quad \text{for } \nu\in X^\vee,\ z\in\C^\times.
\end{align*}
One checks immediately that $\sa_\brd$ preserves the pairing $X\times X^\vee\to\Z$.
It follows from \eqref{e:phi-T-B} that $\sa_\brd$ preserves the subsets
$S\subset R\subset X$ and $S^\vee\subset R^\vee\subset X^\vee$.
One can easily check that $\sa_\brd$ does not depend on the choice of $g\in G$
and that the map $\sa\mapsto \sa_\brd$ is a homomorphism.
See, for instance, \cite[Section 3.2 and Proposition 3.1(a)]{BKLR} for details.
Note that there is  a canonical homomorphism
\[\Aut\BRD(G)\to\Aut\D(G),\]
which is injective when $G$ is semisimple.
\end{subsec}

\begin{definition}\label{d:pinning}
A {\em pinning} of $(G,T,B)$ is a family $(\e\alpha)_{\alpha\in\Sm}$, where $\e\alpha\in \gl_\alpha$,
 $\e\alpha\neq 0$ for all $\alpha\in\Sm$.
\end{definition}

\begin{lemma}[see Conrad {\cite[Prop.\,1.5.5]{Conrad-Reductive}}
or Milne {\cite[Prop.\,23.44]{Milne}}]
\label{l:pinned}
Let $(\e\alpha)_{\alpha\in\Sm}$ be a pinning of $(G,T,B)$.
Then the natural homomorphism
\[\Aut(G,T,B,(\e\alpha))\to\Aut\BRD(G,T,B)\]
is an isomorphism.
\end{lemma}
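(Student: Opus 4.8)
The plan is to construct an explicit two-sided inverse to the natural homomorphism $\lambda\colon \Aut(G,T,B,(\e\alpha))\to\Aut\BRD(G,T,B)$. First I would recall what the source group is: an element is an automorphism $\sa$ of $G$ with $\sa(T)=T$, $\sa(B)=B$, and $\sa(\e\alpha)=\e{\sa_\brd\alpha}$ for every $\alpha\in\Sm$; the map $\lambda$ sends such $\sa$ to the induced automorphism $\sa_\brd$ of the based root datum (which here is literally the restriction of $\sa$ to $\X^*(T)$ and $\X_*(T)$, since $\sa$ already preserves $T$ and $B$, so no conjugation by $g$ as in \ref{ss:brd} is needed).

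\emph{Injectivity of $\lambda$.} Suppose $\sa\in\Aut(G,T,B,(\e\alpha))$ with $\sa_\brd=\id$. Then $\sa$ acts trivially on $X=\X^*(T)$, hence $\sa|_T=\id_T$. Thus $\sa\in\Inn G$ by the exact sequence $1\to\Inn G\to\Aut G\to\Aut\BRD(G)$ recalled in \ref{ss:brd}; write $\sa=\inn(g)$. Since $\sa|_T=\id$, the element $g$ centralizes $T$, so $g\in T$ (as $T$ is its own centralizer in the reductive $G$). Since $\sa$ fixes $B$ and acts trivially on $\Sm$, for each $\alpha\in\Sm$ we have $\sa(\e\alpha)=\e\alpha$; but $\inn(g)$ scales $\e\alpha\in\gl_\alpha$ by $\alpha(g)\in\C^\times$, so $\alpha(g)=1$ for all $\alpha\in\Sm$. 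Hence $g$ lies in the center $Z(G)$, and $\inn(g)=\id$. Therefore $\sa=\id$.

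\emph{Surjectivity of $\lambda$.} Given $\psi\in\Aut\BRD(G,T,B)$, I would first use the existence half of the isomorphism theorem for reductive groups (the split case suffices here since we work over $\C$): there is $\sa\in\Aut G$ with $\sa(T)=T$, $\sa(B)=B$ and $\sa_\brd=\psi$. This is exactly the surjectivity of $\Aut(G,T,B)\to\Aut\BRD(G,T,B)$, which is the Isogeny/Isomorphism Theorem as in Conrad \cite[Thm.\,1.3.9, Prop.\,1.5.5]{Conrad-Reductive} or Milne \cite[Thm.\,23.25, Prop.\,23.44]{Milne}. Such $\sa$ need not respect the pinning, but it does permute the root spaces correctly: $\sa(\gl_\alpha)=\gl_{\psi\alpha}$, so $\sa(\e\alpha)=c_\alpha\,\e{\psi\alpha}$ for some $c_\alpha\in\C^\times$, one scalar per $\alpha\in\Sm$. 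I then correct $\sa$ by an inner automorphism by an element of $T$: since $\Sm$ is linearly independent in $\X^*(T)\otimes\Q$, I can choose $t\in T$ with $\alpha(t)=c_\alpha^{-1}$ for every $\alpha\in\Sm$ (here I may need to pass to $T$ rather than $T^{\ad}$, but $T\onto\X^*(T)^{\vee}\otimes\GmC$ is surjective on the relevant characters, so such $t$ exists). Replacing $\sa$ by $\inn(t)\circ\sa$ leaves $T$, $B$ and $\sa_\brd=\psi$ unchanged (as $t\in T$), while now $(\inn(t)\circ\sa)(\e\alpha)=\alpha(t)c_\alpha\,\e{\psi\alpha}=\e{\psi\alpha}$, i.e.\ $\inn(t)\circ\sa\in\Aut(G,T,B,(\e\alpha))$ and maps to $\psi$. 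Finally I would note that $\lambda$ is a group homomorphism (immediate) and conclude it is an isomorphism.

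The main obstacle is the input from the structure theory of reductive groups: surjectivity genuinely relies on the Isomorphism Theorem producing \emph{some} automorphism realizing a given based-root-datum automorphism; once that black box is invoked, the rest is the elementary rigidification argument above (pin down the residual scaling ambiguity, which is a torus's worth, by conjugating by an element of $T$). One should double-check that the element $t$ can be found in $T$ itself and not merely in $T^{\ad}$ — this works because the $c_\alpha$ are prescribed only on the linearly independent set $\Sm$, so the required homomorphism $Q(R)\to\GmC$ extends (non-uniquely) to $\X^*(T)\to\GmC$, i.e.\ lifts to $t\in T(\C)$.
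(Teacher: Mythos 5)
The paper offers no proof of this lemma, only the citation to Conrad and Milne, and your argument is essentially the standard one found in those sources: rigidity of the pinning gives injectivity (an automorphism trivial on $\BRD$ is $\inn(g)$ with $g\in\Zm_G(T)=T$ and $\alpha(g)=1$ for all $\alpha\in\Sm$, hence $g\in Z(G)$), while surjectivity is the Isomorphism Theorem followed by a correction with $\inn(t)$, $t\in T$. The argument is correct, up to one small indexing slip: $\inn(t)$ scales $\e{\psi\alpha}\in\gl_{\psi\alpha}$ by $(\psi\alpha)(t)$, not by $\alpha(t)$, so you should choose $t$ with $(\psi\alpha)(t)=c_\alpha^{-1}$ for all $\alpha\in\Sm$ (equivalently $\beta(t)=c_{\psi^{-1}\beta}^{-1}$ for $\beta\in\Sm$); such $t$ exists by exactly your linear-independence argument, since $\psi$ permutes $\Sm$. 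Alternatively, composing in the other order as $\sa\circ\inn(t)$ makes your formula $\alpha(t)\hs c_\alpha$ literally correct.
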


Inverting the isomorphism of Lemma \ref{l:pinned}, we obtain a homomorphism
\begin{equation}\label{e:phi-u}
\Aut\BRD(G,T,B)\isoto\Aut(G,T,B,(\e\alpha))\into\Aut(G),
\end{equation}
which is a splitting of the homomorphism
\begin{equation*}
\label{e:psi-inn}
\Aut G\,\to\, \Aut\BRD(G),\quad \sa\mapsto \sa_\brd.
\end{equation*}
It follows that the latter homomorphism is surjective.
By abuse of notation, we identify $\Aut\BRD(G)$ with its image in $\Aut G$
under the embedding \eqref{e:phi-u} and get an isomorphism
\begin{equation*}
\Aut G \isoto\Inn G\rtimes\Aut\BRD(G)
\end{equation*}
depending on the chosen pinning $(e_\alpha)$.

\section{Compact reductive groups}
\label{s:prelim-compact}

A (connected) reductive $\R$-group $\GG=(G,\sigma)$ is called {\em compact}
if the real Lie group $\GG(\R)=G^\sigma$ is compact.
In this case the real structure $\sigma$ on $G$ is called \emph{compact}.

\begin{proposition}[Weyl, Chevalley]
Any  reductive $\C$-group $G$ admits a compact real structure.
Any two compact real structures on $G$ are conjugate by an inner automorphism of $G$.
\end{proposition}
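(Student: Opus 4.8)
The plan is to establish the two assertions --- existence of a compact real structure and uniqueness up to inner conjugation --- separately, reducing in each case to the semisimple (indeed simply connected) situation and then to the well-known classical statement about compact real forms of complex semisimple Lie algebras.

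\textbf{Existence.} First I would reduce to the semisimple case. Write $G = (G^\der \cdot Z(G)^0)$ and observe that a torus $Z(G)^0 \cong (\C^\times)^r$ carries the evident compact real structure $z \mapsto \bar z^{-1}$ on each factor (whose fixed points form the compact torus $(S^1)^r$). For the semisimple part one may pass to the simply connected cover $\GG^\ssc$: a compact real structure on $G^\ssc$ descends through any central isogeny, since the relevant finite central subgroup is preserved by the complex conjugation of a compact form (it is the full torsion of the center in a compact form, or more directly: an anti-regular involution carrying a maximal compact subgroup to itself automatically preserves its center). So it suffices to produce a compact real structure on a simply connected semisimple $G$. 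Here I would invoke the classical theorem of Weyl and Chevalley on the level of Lie algebras: the complex semisimple Lie algebra $\gl$ has a compact real form $\gl_u$ (constructed from a Chevalley basis, with $\mathfrak{k}_u = \bigoplus \R(e_\alpha - e_{-\alpha}) \oplus \bigoplus \R \hs \ii(e_\alpha + e_{-\alpha}) \oplus \bigoplus \R \hs \ii h_j$), and the corresponding conjugation $\tau$ of $\gl$ is an anti-linear Lie algebra involution. By the equivalence of categories $\GG \mapsto (G,\sigma_\GG)$ recalled in the excerpt (or simply because $G$ is simply connected, so $\tau$ integrates to an anti-holomorphic group involution $\sigma$ of $G$), we obtain a real structure $\sigma$ on $G$, anti-regular because $G$ is reductive (Remark in the excerpt). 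Its fixed-point group $G^\sigma$ has Lie algebra $\gl_u$, which is compact; since $G^\sigma$ is a closed real subgroup with compact Lie algebra and finitely many components --- in fact it is connected, being the fixed points of an involution on a simply connected group --- it is compact. This shows $\sigma$ is a compact real structure.

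\textbf{Uniqueness.} Let $\sigma_1, \sigma_2$ be two compact real structures on $G$, with fixed-point groups the compact real forms $U_1, U_2$. The key point is the standard conjugacy theorem for maximal compact subgroups of the complex Lie group $G$ (Cartan): any two maximal compact subgroups are conjugate by an element of $G^0 = G$; and each $U_i$, being a compact real form, \emph{is} a maximal compact subgroup of $G$ (its complexification is all of $G$, forcing maximality). Hence there is $g \in G$ with $g U_1 g^{-1} = U_2$. I claim $\inn(g) \circ \sigma_1 \circ \inn(g)^{-1} = \sigma_2$. Indeed both sides are anti-holomorphic involutions of $G$ (the conjugate of a real structure is a real structure) whose fixed-point group is exactly $U_2$; and an anti-holomorphic involution of a connected reductive group is determined by its fixed-point real form, because the form is Zariski-dense --- equivalently, $\sigma$ is recovered from $U = G^\sigma$ by $\sigma(g) = $ the unique anti-regular-involution value, or concretely because $U$ determines its complexified coordinate ring with its conjugation. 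Thus $\sigma_2 = \inn(g) \circ \sigma_1 \circ \inn(g)^{-1}$, i.e.\ the two compact real structures are conjugate by the inner automorphism $\inn(g)$.

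\textbf{Main obstacle.} The routine reductions (to semisimple, to simply connected, handling the central torus) and the appeal to Weyl--Chevalley are standard; the point requiring the most care is the assertion that a compact real structure is uniquely determined by its fixed-point group, and relatedly that a compact real form is a maximal compact subgroup of the complex group $G$. I would prove the latter by noting that if $U \subsetneq U'$ with $U'$ compact, then $U'$ is Zariski-dense in $G$ (any compact subgroup of $G$ containing a compact real form is), so $\Lie U' $ is a $\tau$-stable real subalgebra of $\gl$ strictly containing the compact real form $\gl_u = \Lie U$; but a compact real form is a maximal compact --- equivalently maximal among real forms on which the Killing form is negative definite --- subalgebra, contradiction. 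For the former I would use that $\sigma$ is an anti-regular map, hence determined by its action on the coordinate ring $\C[G]$, and $U = G^\sigma$ being Zariski-dense pins down $\sigma$ on $\C[G]$ by continuity/density. With these two facts in place the conjugacy argument goes through cleanly.
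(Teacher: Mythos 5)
The paper does not actually prove this proposition: it only cites Onishchik--Vinberg \cite[5.2.3, Theorems 8 and 9]{OV}. Your write-up is therefore, by necessity, a different route --- namely an actual argument --- and it is the standard one: reduce to the simply connected semisimple factor plus the central torus, invoke the Weyl--Chevalley compact real form of $\gl$, integrate, and for uniqueness combine Cartan's conjugacy of maximal compact subgroups with the fact that an anti-regular involution is determined by its Zariski-dense fixed-point set. The overall architecture is correct and is essentially what the cited reference does.

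Two points need repair. First, a minor one: the parenthetical claim that $G^\sigma$ is connected ``being the fixed points of an involution on a simply connected group'' appeals to Steinberg's theorem, which concerns \emph{regular} automorphisms and does not apply as stated to the anti-holomorphic $\sigma$; your fallback is the right argument ($(G^\sigma)^0$ is compact by Weyl's finiteness theorem, and $G^\sigma$ has finitely many components as the real locus of an algebraic group), and one should also note that after descending through the central isogeny with kernel $\mu$ the fixed-point group of the quotient contains the image of the compact form with index bounded by $\#\Ho1(\Gamma,\mu)$, hence is still compact. Second, and more substantively: your proof that a compact real form $U$ is a maximal compact subgroup of $G$ only rules out a compact $U'\supsetneq U$ with $\Lie U'\supsetneq \Lie U$; it does not exclude $U'\supsetneq U$ with $(U')^0=U^0$, i.e.\ extra components. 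To close this, put $U'$ inside a maximal compact subgroup $K$ of the connected Lie group $G$; then $\Lie K$ is a compactly embedded subalgebra of $\gl$ (viewed as a real Lie algebra) containing the maximal compactly embedded subalgebra $\Lie U$, so $K^0=U^0$, and since maximal compact subgroups of connected Lie groups are connected (Cartan--Iwasawa--Mal'cev), $K=U^0\subseteq U$, forcing $U'=U$. With these two repairs the proof is complete.
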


\begin{proof}
See, for instance, Onishchik and Vinberg \cite[5.2.3, Theorems 8 and 9]{OV}.
\end{proof}

\begin{definition}[Adams and Ta{\"\i}bi {\cite[Def.\,3.12]{AT}}]
Let $\sigma$ be a real structure on a reductive $\C$-group $G$.
A {\em Cartan involution for }$\sigma$ is an involutive regular automorphism $\theta$ of $G$ commuting with $\sigma$
and such that the anti-regular automorphism $\sigma\circ\theta=\theta\circ\sigma$ is a compact real structure on $G$.
\end{definition}

The Cartan involution $\theta$ as defined above is the complexification
of the classical Cartan involution of the real Lie group $G^\sigma$.

\begin{proposition}
\label{p:AT-3-13}
Let $\sigma$ be a real structure on a reductive $\C$-group $G$.
Then there exists a Cartan involution $\theta$ for $\sigma$.
The correspondence $\sigma\rightsquigarrow \theta$ induces a bijection
between the set of conjugacy classes of real structures on $G$
and the set of conjugacy classes of involutive regular automorphisms of $G$,
where in both cases
the conjugation action of inner automorphisms of $G$ is considered.
\end{proposition}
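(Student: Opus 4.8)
The plan is to establish the existence of a Cartan involution first, then upgrade it to the claimed bijection. For existence: fix a compact real structure $\sigma_0$ on $G$, which exists by the Weyl--Chevalley proposition. The composition $\sigma\circ\sigma_0$ is a regular automorphism of $G$ (a composition of two anti-regular maps is regular). I would like to replace $\sigma_0$ by a conjugate $\sigma_1=\inn(g)\circ\sigma_0\circ\inn(g)^{-1}$ so that $\theta:=\sigma\circ\sigma_1$ becomes involutive and commutes with $\sigma$. The standard device here is to work on the real Lie group level: on a compact real form $U=G^{\sigma_0}$, the automorphism $\sigma\circ\sigma_0$ acts, and one uses the polar (Cartan) decomposition of the group of automorphisms of $G$ relative to $U$ — equivalently, one applies the classical fact (see Onishchik--Vinberg \cite[Ch.~5]{OV}) that any two compact real structures are inner-conjugate, together with an averaging/fixed-point argument for the $\Z/2$-action generated by $\sigma$ on the symmetric space of compact real forms. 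Concretely: $\sigma$ acts on the set of compact real structures on $G$, which carries a natural structure making it a Riemannian symmetric space of noncompact type (a $G$-conjugation orbit, identified with $G/U$); this action is by an isometry of order $\le 2$, hence has a fixed point $\sigma_1$ by the Cartan fixed-point theorem (a compact group, or here a $\Z/2$, acting on a complete simply connected nonpositively curved space has a fixed point). Fixedness of $\sigma_1$ under $\sigma$ means exactly that $\sigma$ and $\sigma_1$ commute, and then $\theta=\sigma\circ\sigma_1=\sigma_1\circ\sigma$ is an involutive regular automorphism with $\sigma\circ\theta=\sigma_1$ compact, i.e.\ $\theta$ is a Cartan involution for $\sigma$.

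Next I would check that the correspondence is well-defined on conjugacy classes and independent of choices: if $\theta'$ is another Cartan involution for $\sigma$, then $\sigma\circ\theta$ and $\sigma\circ\theta'$ are both compact real structures commuting with $\sigma$, and I would invoke (or prove, by the same symmetric-space fixed-point argument applied inside the fixed locus of $\sigma$) that two compact real structures commuting with $\sigma$ are conjugate by an element of $G^\sigma$, hence by an element of $G$; this forces $\theta$ and $\theta'$ to be conjugate by an inner automorphism. Thus $\sigma\mapsto\theta$ descends to a well-defined map on conjugacy classes, and conjugating $\sigma$ by $\inn(g)$ conjugates $\theta$ by $\inn(g)$, so the map of conjugacy-class sets is well-defined.

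To finish I must show this map is a bijection. For surjectivity: given an involutive regular automorphism $\theta$ of $G$, pick any compact real structure $\tau$; arguing as above (now using that $\theta$ acts with order $\le2$ on the space of compact real structures) I may assume $\tau$ commutes with $\theta$, and then $\sigma:=\tau\circ\theta$ is an anti-regular involution — anti-regular because $\tau$ is and $\theta$ is regular, involutive because $(\tau\theta)^2=\tau\theta\tau\theta=\tau^2\theta^2=1$ using commutativity — so $\sigma$ is a real structure with Cartan involution $\theta$, mapping to the class of $\theta$. For injectivity: suppose $\sigma,\sigma'$ are real structures with Cartan involutions $\theta,\theta'$ that are inner-conjugate; after replacing $\sigma'$ by a conjugate I may assume $\theta=\theta'$. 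Then $\sigma$ and $\sigma'$ both commute with $\theta$, and $\sigma\circ\theta$, $\sigma'\circ\theta$ are compact real structures; writing $\sigma'=(\sigma'\theta)\circ\theta$ and $\sigma=(\sigma\theta)\circ\theta$, it suffices to conjugate the compact structures $\sigma\theta$ and $\sigma'\theta$ to each other by an element centralizing $\theta$ — again a fixed-point statement in the $\theta$-fixed locus of the symmetric space of compact real forms. Then $\sigma$ and $\sigma'$ are inner-conjugate, as required.

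The main obstacle, and the technical heart of the argument, is the repeated use of the fixed-point principle for an order-two automorphism acting on the ``symmetric space of compact real structures'' on $G$, together with its relative (equivariant) version — that fixed points can be found inside the fixed locus of a commuting involution. I would either cite this in the form already packaged in the literature (Adams--Ta\"\i bi \cite{AT}, whose Definition~3.12 is being used here, presumably prove exactly this as their Proposition around 3.13; Onishchik--Vinberg \cite{OV} for the underlying nonpositive-curvature geometry of $G/U$) or spell out the Cartan fixed-point argument: the set of compact real structures is a single $\Inn G$-orbit, the stabilizer of one of them is a maximal compact subgroup, the resulting homogeneous space is diffeomorphic to a Euclidean space and carries a $G$-invariant nonpositively curved metric, and a group of isometries with a bounded orbit (in particular a finite cyclic group) fixes a point; the equivariant refinement follows by applying this to the centralizer of the commuting involution acting on the corresponding totally geodesic fixed submanifold. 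Everything else is a routine check that compositions of (anti-)regular involutions have the asserted regularity and order properties.
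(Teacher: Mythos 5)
The paper does not actually prove this proposition: it records it as known and cites Onishchik--Vinberg \cite[5.1.4, Theorems 3 and 4]{OV} for the semisimple case and Adams--Ta\"\i bi \cite[Theorem 3.13(1)(a) and Corollary 3.17]{AT} for general reductive (even disconnected) $G$. Your proposal reconstructs the standard argument behind those references, and its overall architecture is sound: the set of compact real structures is a single $\Inn G$-orbit (by the Weyl--Chevalley proposition), isomorphic to the symmetric space $G^{\mathrm{ad}}(\C)/U^{\mathrm{ad}}$, which is a Hadamard manifold; an involution ($\sigma$ or $\theta$) acts on it by an isometry of order at most two and hence has a fixed point by Cartan's theorem, which yields existence of a commuting compact structure; and the formal manipulations producing $\theta=\sigma\circ\sigma_1$, the surjectivity, and the injectivity are all correct. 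So you are proving more than the paper does, by essentially the same route as the cited sources.

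One step in your sketch is thinner than you acknowledge. The statement you repeatedly need is not just a fixed-point statement but a \emph{transitivity} statement: that the centralizer of the commuting involution in $\Inn G$ acts transitively on the fixed locus $F$ of that involution in the space of compact real structures. Convexity of $F$ and the Cartan fixed-point theorem alone do not give this; ``applying the fixed-point argument inside the fixed locus'' produces fixed points, not conjugating elements. The standard way to close this gap is either the polar-decomposition argument (write $\sigma_1\sigma_2=\inn(\exp x)$ with $x$ in the appropriate $(-1)$-eigenspace and conjugate by $\inn(\exp(x/2))$, which automatically centralizes the commuting involution), or the geometric argument that the orbit of the centralizer's identity component through a point $p\in F$ has tangent space equal to $T_pF$ and $F=\exp_p(T_pF)$ because $F$ is complete, connected, and totally geodesic in a Hadamard manifold. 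Since you explicitly offer to cite \cite[Theorem 3.13]{AT}, which contains exactly this equivariant conjugacy, the proposal is acceptable as it stands; but if you intend to make the argument self-contained you should supply one of these two completions.
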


This result is well known for semisimple $G$; see for instance Onishchik and Vinberg \cite[5.1.4, Theorems 3 and 4]{OV}.
For a proof for a general reductive group (even not necessarily connected)
see Adams and Ta{\"\i}bi \cite[Theorem 3.13(1)(a) and Corollary 3.17]{AT}.

\begin{subsec}
By Proposition \ref{p:AT-3-13}  any reductive $\R$-group is obtained
from a compact reductive $\R$-group $\GG=(G,\sigma_c)$
by twisting by an involutive real automorphism $\theta$ of $\GG$.
In other words, any reductive $\R$-group is of the form
$$\thG=(G,\sigma),$$ where $\sigma=\theta\circ\sigma_c$, $\sigma_c$
is a compact structure on $G$, $\theta\in\Aut\GG$, $\theta^2=\id$.
Our aim is to compute $\Ho1(\R,\thG)$. To this end, we need to analyze the structure of $\theta$.

The fixed point subgroup $\GG^{\theta}$ is {reductive (though possibly disconnected)},
 and $(\GG^\theta)^0$ is compact.
Choose a maximal torus $\TT_0\subset(\GG^{\theta})^0$.
\end{subsec}

\begin{lemma}\label{l:Z(T0)}
$\TT=\Zm_\GG(\TT_0)$ is a $\theta$-stable maximal torus in $\GG$.
\end{lemma}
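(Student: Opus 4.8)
The plan is to show first that $\TT := \Zm_\GG(\TT_0)$ is a torus, and then that it is maximal and $\theta$-stable. Since $\TT_0$ is a torus in the connected reductive group $\GG$, its centralizer $\Zm_\GG(\TT_0)$ is a connected reductive subgroup of $\GG$ (the centralizer of any torus in a connected reductive group is connected and reductive; this is standard, see e.g.\ Borel's book). To conclude that $\TT$ is a torus, it suffices to show that $\TT$ is solvable, equivalently that $\TT$ has no nontrivial semisimple part. For this I would exploit that $\TT_0$ is a \emph{maximal} torus of $(\GG^\theta)^0$: if $\TT$ had positive semisimple rank, then $\Zm_\GG(\TT_0)^\der$ would be a nontrivial connected semisimple group normalized by $\theta$ (see next paragraph for $\theta$-stability), hence $\theta$ would act on it, and its fixed-point group $(\Zm_\GG(\TT_0)^\der)^\theta$ would have positive-dimensional identity component containing a maximal torus strictly larger than $\TT_0 \cap \Zm_\GG(\TT_0)^\der$ inside $(\GG^\theta)^0$, contradicting maximality of $\TT_0$. (Here one uses that an involution of a connected semisimple group always has a fixed subgroup of positive rank, since the $+1$-eigenspace of $\theta$ on a $\theta$-stable maximal torus of $\Zm_\GG(\TT_0)^\der$ is nontrivial whenever that torus is nontrivial — the $+1$ eigenspace of an order-$2$ automorphism of a nonzero vector space containing the relevant lattice is nonzero.) Therefore $\TT$ is a torus in $\GG$.

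Next, $\theta$-stability. Since $\theta$ fixes $\TT_0$ pointwise (as $\TT_0 \subset \GG^\theta$), the automorphism $\theta$ permutes the centralizer: for any $t_0 \in \TT_0$ and $g \in \GG$ with $g t_0 g^{-1} = t_0$ for all $t_0$, applying $\theta$ gives $\theta(g)\, t_0\, \theta(g)^{-1} = \theta(g t_0 g^{-1}) = \theta(t_0) = t_0$, so $\theta(g) \in \Zm_\GG(\TT_0) = \TT$. Hence $\theta(\TT) = \TT$, i.e.\ $\TT$ is $\theta$-stable. (Strictly speaking this argument should be run before or alongside the torus argument above, since I invoked $\theta$-stability there; the two are logically independent and can be established in either order — $\theta$-stability uses only $\TT_0 \subseteq \GG^\theta$ and not that $\TT$ is a torus.)

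Finally, maximality of $\TT$ as a torus in $\GG$. Since $\TT$ is a torus, it is contained in some maximal torus $\TT'$ of $\GG$; then $\TT' \subseteq \Zm_\GG(\TT')\subseteq \Zm_\GG(\TT)$. But $\TT = \Zm_\GG(\TT_0) \supseteq \Zm_\GG(\TT)$ because $\TT_0 \subseteq \TT$ forces $\Zm_\GG(\TT)\subseteq\Zm_\GG(\TT_0)$; combined with $\TT \subseteq \Zm_\GG(\TT)$ (as $\TT$ is abelian, being a torus), we get $\TT = \Zm_\GG(\TT)$. Hence $\TT' \subseteq \TT$, forcing $\TT = \TT'$, so $\TT$ is a maximal torus of $\GG$. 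Alternatively and more directly: a torus in a connected reductive group is maximal iff it equals its own centralizer, and we have just shown $\Zm_\GG(\TT) = \TT$.

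The main obstacle is the first step — proving that $\Zm_\GG(\TT_0)$ is a torus rather than merely a connected reductive group. The key input is the maximality of $\TT_0$ inside $(\GG^\theta)^0$, which has to be leveraged against the potential semisimple part of the centralizer via the observation that any involution of a nontrivial connected semisimple group has fixed points of positive rank. Everything else ($\theta$-stability, maximality of the resulting torus) is formal manipulation of centralizers.
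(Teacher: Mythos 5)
Your overall strategy — centralizers of tori are connected reductive, then play the maximality of $\TT_0$ in $(\GG^\theta)^0$ against the putative semisimple part $\Zm_\GG(\TT_0)^\der$ — is exactly the strategy of the paper, and your treatment of $\theta$-stability and of maximality of the resulting torus is correct and matches the paper's. But the one step you yourself flag as the crux contains a genuine error in its justification. You claim that an involution of a nontrivial connected semisimple group has fixed subgroup of positive rank \emph{because} ``the $+1$ eigenspace of an order-$2$ automorphism of a nonzero vector space containing the relevant lattice is nonzero.'' That linear-algebra statement is false: $-\id$ preserves any lattice and has zero $+1$-eigenspace. Concretely, for the split involution $\theta(g)=(g^t)^{-1}$ of $\SL_2$, a $\theta$-stable maximal torus is the diagonal torus, on which $\theta$ acts by inversion, so the $+1$-eigenspace on its Lie algebra is $0$; the fixed subgroup $\SO_2$ does have positive rank, but its nontrivial torus is not found inside the chosen $\theta$-stable maximal torus. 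So your argument, as written, does not establish the claim you need.

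The correct way to fill the gap is essentially the computation the paper performs at the Lie algebra level. Write $\gl=\gl^\theta\oplus\gl^{-\theta}$. Since $\tl_0$ is a Cartan subalgebra of $\gl^\theta$, it is self-centralizing there, so $\tl\cap\gl^\theta=\tl_0\subseteq\zl(\tl)$; hence the graded subspace $\tl^\der$ lies entirely in $\gl^{-\theta}$, and then $[\tl^\der,\tl^\der]\subseteq\gl^\theta\cap\tl^\der=0$, forcing $\tl^\der$ to be abelian and therefore zero. (If you insist on arguing at the group level with $H=\Zm_\GG(\TT_0)^\der$, the true reason $\hl^\theta\ne0$ is this same bracket identity — $\hl^\theta=0$ would give $\hl=\hl^{-\theta}$ and $[\hl,\hl]\subseteq\hl^\theta=0$ — and you would additionally need that $(H^\theta)^0$ contains a nontrivial \emph{torus}, not merely that it is positive-dimensional, which requires reductivity of fixed points of a semisimple automorphism; the paper's route through $\tl^\der\subseteq\tl$ avoids this entirely, since $\tl$ already consists of commuting semisimple elements.)
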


This is a particular case of a more general statement about any semisimple automorphism $\theta$ of $G$;
see for instance \cite[3.3.8, Theorem 3.13]{OV2}.
For convenience of the reader, we provide a simple proof adapted to our case.

\begin{proof}
The group $\TT$ is connected and reductive; see Humphreys \cite[Theorem 22.3 and Corollary 26.2.A]{Humphreys}.
Since $\TT$ is $\theta$-stable, $\tl$ is a graded Lie subalgebra of $\gl$,
where the grading $\gl=\gl^{\theta}\oplus\gl^{-\theta}$ modulo 2 is given by the decomposition
into the sum of eigenspaces $\gl^{\pm\theta}$ for the differential of $\theta$ with eigenvalues $\pm1$, respectively.

Since $\tl_0$ is a Cartan subalgebra in $\gl^{\theta}$, it coincides with its centralizer in $\gl^{\theta}$.
Hence we have $\tl\cap\gl^{\theta}=\tl_0$.
The reductive Lie algebra $\tl$ is the direct sum of its center $\zl(\tl)$ and the derived subalgebra  $\tl^\der=[\tl,\tl]$.
Both summands are graded and, since $\zl(\tl)\supseteq\tl_0=\tl\cap\gl^{\theta}$, we have $\tl^\der\subseteq\gl^{-\theta}$.
It follows  that $[\tl^\der,\tl^\der]\subseteq \gl^\theta$.
Since also $[\tl^\der,\tl^\der]\subseteq\tl^\der\subseteq\gl^{-\theta}$,
we see that $[\tl^\der,\tl^\der]=0$.
We conclude that  $\tl$ is abelian, because otherwise $\tl^\der$ would be nonzero and semisimple, hence nonabelian.
Therefore $\TT$ is a torus, and clearly it is a maximal torus containing $\TT_0$.
\end{proof}

\begin{subsec}
We have a decomposition $\TT=\TT_0\cdot\TT_1$ into an almost direct product of algebraic tori,
where $\theta$ acts on $\TT_0$ trivially and on $\TT_1$ as inversion.
The respective Lie algebra decomposition is $\tl=\tl_0\oplus\tl_1$,
where $\tl_0=\tl\cap\gl^{\theta}$ and $\tl_1=\tl\cap\gl^{-\theta}$.
Note that $\thT$ is a maximal torus in $\thG$,
$\thT_1$ is the maximal split subtorus in $\thT$,
and $\thT_0=\TT_0$ is the maximal compact subtorus in $\thT$.
Since $\thT$ is the centralizer of $\TT_0$ in $\thG$, we see  that $\TT_0$ is a maximal compact torus in $\thG$.

Consider the root system $\Rt=\Rt(G,T)$, on which $\theta$ acts naturally.
The restrictions of all roots $\alpha\in\Rt$ to $T_0$ are nonzero (in the additive terminology),
because $\Zm_G(T_0)=T$. Choose a sufficiently general lattice vector $\nu\in(X^{\vee})^{\theta}=\X_*(T_0)$,
so that $\langle\alpha,\nu\rangle\ne0$, $\forall\alpha\in\Rt$,
and define the set of positive roots $\Rt_+\subset\Rt$ by the condition $\langle\alpha,\nu\rangle>0$.
Then $\Rt_+$ and the corresponding set of simple roots $\Sm\subset\Rt_+$ are preserved under $\theta$.
We denote by $B$ the Borel subgroup of $G$ containing $T$ which corresponds to $\Rt_+$.
\end{subsec}

\begin{proposition}
\label{p:inv-can-form}
Let $\GG=(G,\sigma_c)$ be a compact reductive $\R$-group, $\theta$ be an involutive real automorphism of $\GG$,
and $\TT=\Zm_\GG(\TT_0)$ be the centralizer of a maximal torus $\TT_0\subset\GG^{\theta}$.
Set $\tau=\theta_\brd\in\Aut\BRD(G,T,B)$. There exists a pinning $(\e\alpha)_{\alpha\in\Sm}$ such that
\begin{equation*}
\label{e:inv-can-form}
\theta=\inn(t_0)\circ\ttau,
\end{equation*}
where $\ttau$ is regarded as an involutive automorphism of $(G,T,B,(\e\alpha))$
via the isomorphism \eqref{e:phi-u} and $t_0\in T_0=(T^{\ttau})^0$ is such that $t_0^2\in Z(G^\der)$.
\end{proposition}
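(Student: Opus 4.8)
The plan is to exhibit the pinning $(\e\alpha)$ and the element $t_0$ by starting from the splitting of $\Aut G$ attached to an \emph{arbitrary} pinning and then correcting it. First I would fix any pinning $(\e\alpha^\circ)_{\alpha\in\Sm}$ of $(G,T,B)$. By the construction in \ref{ss:brd} and the splitting \eqref{e:phi-u} we may write $\theta=\inn(g)\circ\tau$ for some $g\in G$, where $\tau=\theta_\brd$ is viewed as an automorphism of $(G,T,B,(\e\alpha^\circ))$ via \eqref{e:phi-u}; note $\tau$ preserves $T$ and $B$, and $\tau^2=\id$ because $\theta_\brd$ is an involution (the paper calls the identity a trivial involution, so this covers the inner case too). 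Since $\theta(T)=T$ and $\theta(B)=B$ by the construction of $\Rt_+$ and $B$ in the paragraph before the proposition, and $\tau$ also fixes $(T,B)$, conjugation by $g$ normalizes both $T$ and $B$, hence $g\in T$. So after the initial reduction we already have $\theta=\inn(t)\circ\tau$ with $t\in T$.

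Next I would arrange $t\in T_0$. Apply $\theta$ twice: $\id=\theta^2=\inn(t)\,\tau\,\inn(t)\,\tau=\inn\bigl(t\cdot\tau(t)\bigr)\circ\tau^2=\inn\bigl(t\cdot\tau(t)\bigr)$, so $t\cdot\tau(t)\in Z(G)$. Decompose $T=T_0\cdot T_1$ as in the subsection before the proposition, with $\tau$ acting trivially on $T_0$ and by inversion on $T_1$ (note $\tau|_T=\theta|_T$ since $\inn(t)$ acts trivially on $T$). Writing $t=t_0t_1$ accordingly, we get $t\cdot\tau(t)=t_0^2\cdot(t_1\tau(t_1))=t_0^2\in Z(G)$ after absorbing the torsion: more precisely, I would use the freedom in choosing the pinning. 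Conjugating the pinning $(\e\alpha^\circ)$ by $t_1^{1/2}$ — or rather, replacing $\tau$ by the conjugate splitting coming from the pinning $(\inn(s)\e\alpha^\circ)$ for a suitable $s\in T$ — changes $t$ by $s\cdot\tau(s)^{-1}=s^2$ when $s\in T_1$ (since $\tau$ inverts $T_1$), so choosing $s$ with $s^2=t_1$ kills the $T_1$-component. Concretely: since $T_1$ is a torus (hence divisible), pick $t_1^{1/2}\in T_1$; replacing the pinning by its $\inn(t_1^{1/2})$-conjugate replaces $(t,\tau)$ by $(t_0,\tau)$, giving $\theta=\inn(t_0)\circ\tau$ with $t_0\in T_0$.

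Finally, from $\theta^2=\id$ we now read $\inn(t_0^2)=\id$, i.e.\ $t_0^2\in Z(G)$. I would refine this to $t_0^2\in Z(G^\der)$: write $Z(G)^0\cdot Z(G^\der)=Z(G)$ (almost direct), and absorb the $Z(G)^0$-part of $t_0^2$ into $t_0$ by a further square-root adjustment — $Z(G)^0$ is a torus, so it has a square root $z$ of the $Z(G)^0$-component of $t_0^2$; since $z$ is central, $\inn(t_0)=\inn(t_0 z^{-1})$, and replacing $t_0$ by $t_0z^{-1}$ (still in $T_0$, as $Z(G)^0\subset T$ and $z$ can be chosen in $T_0$ because $Z(G)^0$ is $\theta$-fixed) leaves $\theta$ unchanged while moving $t_0^2$ into $Z(G^\der)$. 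It remains to observe $T_0=(T^\tau)^0$: since $\tau$ acts trivially on $T_0$ and by inversion on $T_1$, we have $T^\tau=T_0\cdot (T_1)_2$, whose identity component is $T_0$; and $\tau=\theta_\brd$ stabilizes the pinning by Lemma \ref{l:pinned}, so $\tau$ indeed acts on $(G,T,B,(\e\alpha))$ as required. I expect the main obstacle to be bookkeeping the two successive square-root adjustments while checking that each keeps the element inside $T_0$ and does not disturb the previously arranged properties — in particular verifying that conjugating the pinning by an element of $T$ changes the ``inner part'' $t$ exactly by the expected coboundary $s\cdot\tau(s)^{-1}$, which is the one computation I would carry out carefully rather than wave at.
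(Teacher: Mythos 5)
Your overall route is the paper's: start from an arbitrary pinning to get $\theta=\inn(t)\circ\ttau$ with $t\in Z_G(T)=T$, then kill the $T_1$-component of $t$ by conjugating the pinning by a square root $s\in T_1$ of $t_1$ (the computation $s\cdot\ttau(s)^{-1}=s^2$ that you single out is exactly the paper's ``easy calculation'': the new splitting is $\inn(t_1)\circ\ttau$, which is still involutive and preserves $(\Ad(s)\e\alpha)$), and finally deal with the center. Steps one and two are correct and essentially verbatim what the paper does.

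The final step has a genuine gap. Your justification that $z$ can be chosen in $T_0$ ``because $Z(G)^0$ is $\theta$-fixed'' is false: $Z(G)^0$ is $\theta$-\emph{stable}, but $\theta$ need not fix it pointwise (e.g.\ $\theta$ may invert a central torus, as happens whenever $\tau$ acts nontrivially on $Z(G)^0$; in that case part of $Z(G)^0$ sits inside $T_1$, not $T_0$). Consequently you have not shown that the central component $c$ of $t_0^2$ admits a square root lying in $T_0$, nor even that $c$ itself can be chosen in $T_0$; divisibility of the torus $Z(G)^0$ alone does not give this. The correct repair is the paper's: since $(\GG^\theta)^0$ is an almost direct product of $((\GG^\der)^\theta)^0$ and $((Z(\GG)^0)^\theta)^0$, the torus $\TT_0$ decomposes as an almost direct product of a maximal compact torus of $\thG^\der$ and one of $Z(\thG)^0$; writing $t_0=ab$ accordingly, the central factor $b$ does not contribute to $\inn(t_0)$, so one may simply replace $t_0$ by $a\in T_0\cap G^\der$ (no square root is needed here), and then $\inn(a)^2=\id$ gives $a^2\in Z(G)\cap G^\der=Z(G^\der)$. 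Your $z$ is exactly this $b$, but producing it requires the decomposition of $T_0$, not the (false) pointwise fixedness of $Z(G)^0$.
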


Again, this is a particular case, for which we provide a simple proof,
of a more general statement about any semisimple automorphism $\theta$ of $G$;
see for instance \cite[3.3.8, Theorems 3.12(4) and 3.13]{OV2}.

\begin{proof}
First, take an arbitrary pinning $(\e\alpha)$ of $(G,T,B)$ and consider
the respective automorphism $\ttau$ of $(G,T,B,(\e\alpha))$.
Then $\theta$ and $\ttau$ act similarly on $T$, whence $\theta=\inn(t)\circ\ttau$ for some $t\in\Zm_G(T)=T$.

We decompose: $t=t_0t_1$, where $t_0\in T_0$ and $t_1\in T_1$. Choose $s\in T_1$ such that $s^2=t_1$.
Easy calculations show that the
automorphism $\inn(t_1)\circ\ttau$
is still involutive and preserves the pinning $(\Ad(s)\hshs\e\alpha)$.
Replacing $\ttau$ by $\inn(t_1)\circ\ttau$,  we obtain $\theta=\inn(t_0)\circ\ttau$, as desired.

Finally, note that $\TT_0$ decomposes into a product of maximal compact tori in $\thG^\der$ and in $Z(\thG)^0$.
Decomposing $t_0$ respectively, we see that only the first factor contributes to $\inn(t_0)$.
Thus we may assume $t_0\in T_0\cap{G^\der}$. Since $\ttau$ commutes with $\inn(t_0)$,
and both $\ttau$ and $\theta$ are involutive, $\inn(t_0)$ is also involutive, whence $t_0^2\in Z(G^\der)$.
\end{proof}

\begin{remark}\label{r:tau&comp}
Since $t_0$ is an element of finite order, it belongs to $\TT_0(\R)$.
Hence $\inn(t_0)$ and $\ttau$ commute with $\sigma_c$, that is, belong to $\Aut\GG$.
\end{remark}

\section{Reduction to a maximal compact torus}
\label{s:red-to-torus}

From now on we shall use the following notations:

\begin{subsec}{\bf Notation}\label{n:not2}
\begin{itemize}
\item[\cc] $\GG=(G,\sigma_c)$ is a {\em compact} connected reductive $\R$-group.
           We denote the action of $\sigma_c$ on $G$ by bar: $\bar{g}=\sigma_c(g)$ for $g\in G$.
\item[\cc] $\TT\subset\GG$ is a maximal torus.
\item[\cc] $B\subset G$ is a Borel subgroup containing $T$.
\item[\cc] $\tau\in\Aut\BRD(G,T,B)$, $\tau^2=\id$. We also regard $\ttau$
           as a  real automorphism of $(\GG,\TT,B)$ (see Remark~\ref{r:tau&comp})
            preserving a given pinning $(\e\alpha)_{\alpha\in\Sm}$.
\item[\cc] $\TT_0=\left(\TT^\ttau\right)^0$ and $\TT_1=\left(\TT^{-\ttau}\right)^0$,
            where $T^{\pm\ttau}=\{t\in T\mid \ttau(t)=t^{\pm1}\}$. Then $\TT=\TT_0\cdot\TT_1$.
            Note that both $\TT_0$ and $\TT_1$ are {\em compact} tori and $\TT(\R)=\TT_0(\R)\cdot\TT_1(\R)$.
\item[\cc] $\tto\in T_0$ is such that $\tto^2\in Z(G^\der)$.
\item[\cc] $\theta=\inn(\tto)\circ\ttau$ {is an involutive automorphism of $\GG$.
           It preserves $\TT$ and $B$, and acts on the Dynkin diagram $\D(G)$ by the involution $\tau$.}
\item[\cc] $\thG=(G,\sigma)$, where $\sigma=\theta\circ\sigma_c$.
\item[\cc] $\NN=\Nm_\GG(\TT)$ {and} $\NN_0=\Nm_\GG(\TT_0)$. Note that $\NN_0\subseteq\NN$, because $\TT=\Zm_\GG(\TT_0)$.
\item[\cc] $W=N/T$ and $W_0=N_0/T$ are the Weyl groups of $T$ and $T_0$ in $G$, respectively.
\end{itemize}
\end{subsec}

\begin{subsec}
We wish to compute $\Ho1(\R,\thG)$.
The set of 1-cocycles for $\thG$ is by definition
$$\Zl1(\R,\thG)=\{a\in G\mid a\cdot\theta(\bar{a})=1\}.$$
The group $\thG(\C)=G$ acts on $\Zl1(\R,\thG)$ by the formula
\begin{equation}\label{e:cohomologous}
g:a\mapsto g\cdot a\cdot\theta(\bar{g})^{-1} \quad\text{for } g\in G,\ a\in \Zl1(\R,\thG),
\end{equation}
and by definition $\Ho1(\R,\thG)$ is the set of orbits for this action.
See Subsection \ref{ss:H1}.

We shall show that the action of $N_0\subset G$ on $T$
 by formula \eqref{e:cohomologous} preserves $\Zl1(\R,\thT)$
and induces an action of $W_0$ on $\Ho1(\R,\thT)$.

Let $n\in N_0$, $s\in T$;
then $nsn^{-1}\in T$.
We define  actions $\ath$ and $\at$ of $N_0$ on $T$ by
\begin{align}
&n\ath s:=n\hs s\,\theta(n)^{-1}=nsn^{-1}\cdot n\,\theta(n)^{-1}, \label{e:ath} \\
&n\at s:=n\hs s\,\theta(\nb)^{-1}=nsn^{-1}\cdot n\,\theta(\nb)^{-1}.\label{e:at}
\end{align}
These actions are well defined by the following lemma.
\end{subsec}

\begin{lemma}\label{l:n-th-s}
If $n\in N_0$, $s\in T$, then $n\ath s\in T$ and $n\at s\in T$.
\end{lemma}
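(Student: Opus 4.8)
The plan is to verify both inclusions directly, using the definitions \eqref{e:ath} and \eqref{e:at} together with the structural facts already established about $\ttau$, $\tto$, and $\theta$. First I would observe that since $n\in N_0=\Nm_\GG(\TT_0)$ and $\TT=\Zm_\GG(\TT_0)$ (Lemma \ref{l:Z(T0)}), conjugation by $n$ normalizes $\TT$, so $nsn^{-1}\in T$ for every $s\in T$; hence in both formulas the first factor $nsn^{-1}$ already lies in $T$, and the claim reduces to showing that the second factor $n\,\theta(n)^{-1}$ (resp.\ $n\,\theta(\nb)^{-1}$) lies in $T$.

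For $\ath$: write $\theta=\inn(\tto)\circ\ttau$ with $\tto\in T_0$ and $\ttau$ preserving $(G,T,B,(\e\alpha))$. Then $\theta(n)=\tto\,\ttau(n)\,\tto^{-1}$, and since $\ttau$ stabilizes $\TT$ (indeed $\ttau\in\Aut(\GG,\TT,B)$ by the conventions of \ref{n:not2}) and moreover stabilizes $\TT_0=(\TT^\ttau)^0$, we get $\ttau(n)\in N_0$. Therefore $\theta(n)\in N_0$ as well (conjugation by $\tto\in\TT_0\subseteq\Zm_\GG(\TT_0)$ fixes $\TT_0$ pointwise). Now $n$ and $\theta(n)$ both lie in $N_0$ and both normalize $\TT_0$; I claim they induce the \emph{same} automorphism of $\TT_0$. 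Indeed, for $t\in T_0$ we have $\theta(ntn^{-1})=\theta(n)\theta(t)\theta(n)^{-1}$, and $\theta(t)=t$ because $\theta$ acts trivially on $\TT_0$; meanwhile $ntn^{-1}\in T_0$, so $\theta(ntn^{-1})=ntn^{-1}$ too. Hence $ntn^{-1}=\theta(n)\,t\,\theta(n)^{-1}$ for all $t\in T_0$, i.e.\ $\theta(n)^{-1}n$ centralizes $\TT_0$, so $\theta(n)^{-1}n\in\Zm_\GG(\TT_0)=\TT$, which gives $n\,\theta(n)^{-1}\in T$ after conjugating, hence $n\ath s\in T$.

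For $\at$ the argument is the same with one extra ingredient: one must check that $\nb=\sigma_c(n)$ still lies in $N_0$. This holds because $\sigma_c$ is the compact structure on the \emph{real} group $\GG$ with maximal torus $\TT$ and $\ttau$, $\tto$ are defined over $\R$ (Remark \ref{r:tau&comp}); thus $\sigma_c$ stabilizes $\TT$ and $\TT_0$, so $\nb\in N_0$, and then $\theta(\nb)\in N_0$ by the previous paragraph applied to $\nb$. Repeating the centralizer computation with $\nb$ in place of $n$ shows $n\,\theta(\nb)^{-1}$ centralizes $\TT_0$ (here one uses that $n$ and $\theta(\nb)$ induce the same automorphism of $\TT_0$, which follows since $\sigma_c$ and $\theta$ act trivially on $\TT_0(\R)$, or more carefully: $\theta(\nb)$ induces on $\TT_0$ the composite of the automorphism induced by $\nb$ with the trivial automorphisms coming from $\theta$ and $\sigma_c$), hence lies in $\TT$, giving $n\at s\in T$.

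The only genuinely delicate point is the bookkeeping of \emph{which} automorphisms of $\TT_0$ are induced by $n$, $\theta(n)$, $\nb$, $\theta(\nb)$ — everything hinges on the fact that $\theta$ and $\sigma_c$ both restrict to the identity on $\TT_0$ (as algebraic automorphisms, by construction of $\TT_0$ inside $(\GG^\theta)^0$ and since $\TT_0$ is a compact torus fixed by the compact structure). Once that is pinned down, the inclusions follow purely formally from $\Zm_\GG(\TT_0)=\TT$. I do not expect any serious obstacle; the proof is a short diagram-chase through the normalizer, and the main thing to be careful about is not to confuse conjugation-as-a-group-element with the induced-automorphism-on-$\TT_0$ viewpoint.
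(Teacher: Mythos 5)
Your proposal is correct and takes essentially the same route as the paper: both arguments reduce to showing that the correction factors $n\,\theta(n)^{-1}$ and $n\,\theta(\nb)^{-1}$ lie in $T=\Zm_G(T_0)$, by checking that neither $\theta$ nor complex conjugation changes the automorphism of $T_0$ induced by $n\in N_0$ (equivalently, that both act trivially on $W_0\hookrightarrow\Aut(T_0)$). The only point to tighten is your justification that $\nb$ and $n$ induce the same automorphism of $T_0$: the reason is not that $\sigma_c$ ``restricts to the identity on $\TT_0$'' (it is the identity only on $\TT_0(\R)$, and $ntn^{-1}$ need not be real for $t\in\TT_0(\R)$), but rather that every algebraic automorphism of the compact torus $\TT_0$ is defined over $\R$, so conjugation by $\sigma_c$ acts trivially on $\Aut(T_0)$ --- which is exactly the paper's formulation.
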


\begin{proof}
Since $\theta$ acts trivially on $T_0$ and  since $W_0$ naturally embeds into $\Aut(T_0)$,
we see that $\theta$ acts trivially on $W_0$.
It follows that  $n\cdot \theta(n)^{-1}\in T$, and hence
\begin{equation*}
 n\ath s=nsn^{-1}\cdot n\, \theta(n)^{-1}\in T.
 \end{equation*}
Since $\TT_0$ is a {\em compact} torus, all its complex automorphisms are defined over $\R$;
hence the complex conjugation acts trivially on $\Aut(T_0)$ and on $W_0$, whence
\[n \nb^{-1}\in T\quad\text{and}\quad \theta(n)\cdot \theta(\nb)^{-1}=\theta(n\nb^{-1})\in T.\]
Consequently,
\begin{equation*}
n\cdot\theta(\nb)^{-1}=n\hs\theta(n)^{-1}\cdot\theta(n)\hs\theta(\nb)^{-1}\in T,
\end{equation*}
thus
\[n\at s=n\hs s\hs\theta(\nb)^{-1}=nsn^{-1}\cdot n\,\theta(\nb)^{-1}\in T.\qedhere\]
\end{proof}

Let $n\in N_0$ and let $s\in \Zl1(\R, \thT)\subset T$,
then by Lemma \ref{l:n-th-s} we have $n\at s\in T$.
Comparing \eqref{e:cohomologous} and \eqref{e:at},
we see that $n\at s$ is cohomologous to $s$ in $\Zl1(\R,\thG)$,
and hence $n\at s$ is a cocycle,
that is, $n\at s\in \Zl1(\R,\thT)$.
We obtain an action of $N_0$ on $\Zl1(\R,\thT)$,
which we again denote by $\at$.

\begin{lemma}\label{l:action-at}
The action $\at$ of $N_0$ on $\Zl1(\R,\thT)$ induces an action of $N_0$ on $\Ho1(\R,\thT)$,
which in turn induces an action of $W_0$ on $\Ho1(\R,\thT)$.
\end{lemma}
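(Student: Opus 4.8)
The plan is to observe that both actions in play — the action $\at$ of $N_0$ on $\Zl1(\R,\thT)$ and the action of $T$ on $\Zl1(\R,\thT)$ by the cohomology equivalence — are restrictions of the single $G$-action \eqref{e:cohomologous} on $\Zl1(\R,\thG)$. Indeed, for $g=n\in N_0$ and $s\in\Zl1(\R,\thT)=T\cap\Zl1(\R,\thG)$, formula \eqref{e:at} gives $n\cdot s=n\at s$, which lies in $\Zl1(\R,\thT)$ by Lemma \ref{l:n-th-s} and the remark preceding this lemma; for $g=t\in T$, formula \eqref{e:cohomologous} is exactly the relation defining cohomologous cocycles, see Subsection \ref{ss:H1}. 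So on $\Zl1(\R,\thT)$ we have one group action of $N_0$ whose restriction to $T$ is the cohomology equivalence, and the statement becomes pure bookkeeping.

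First I would record that $T$ is normal in $N_0$: this holds because $N_0\subseteq\NN=\Nm_\GG(\TT)$ and $T=\Zm_G(T)$ is normal in $N$. Next, for $n\in N_0$, $t\in T$ and a cocycle $s\in\Zl1(\R,\thT)$, I would use associativity of the $G$-action \eqref{e:cohomologous} to write
\[n\at(t\at s)=(nt)\at s=(ntn^{-1})\at(n\at s).\]
Since $ntn^{-1}\in T$ and, by Lemma \ref{l:n-th-s}, $n\at s\in\Zl1(\R,\thT)$, the last expression is the cohomology action of an element of $T$ on the cocycle $n\at s$; hence $n\at(t\at s)$ and $n\at s$ represent the same class in $\Ho1(\R,\thT)$. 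Thus $N_0$ permutes the $T$-orbits in $\Zl1(\R,\thT)$, and the action $\at$ descends to an action of $N_0$ on $\Ho1(\R,\thT)$.

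For the last assertion I would note that the restriction to $T\subseteq N_0$ of this descended action is, by construction, the action of $T$ by cohomology equivalence, which is trivial on $\Ho1(\R,\thT)$ by the very definition of $\Ho1$. Consequently the $N_0$-action on $\Ho1(\R,\thT)$ factors through $N_0/T=W_0$, giving the desired $W_0$-action. I do not expect a real obstacle here; the only point needing a little care is ensuring that intermediate expressions such as $n\at s$ stay inside $T$ so that the $T$-action can be applied to them, which is precisely what Lemma \ref{l:n-th-s} guarantees.
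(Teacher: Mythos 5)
Your proof is correct and follows essentially the same route as the paper: the identity $n\at(t\at s)=(nt)\at s=(ntn^{-1})\at(n\at s)$, obtained from associativity of the left $G$-action \eqref{e:cohomologous}, is exactly the computation the paper carries out explicitly ($n\at s'=t'\cdot(n\at s)\cdot\theta(\bar{t'})^{-1}$ with $t'=ntn^{-1}$), and the passage to $W_0$ via triviality of the $T$-action is identical. Your abstract packaging is a clean reformulation, not a different argument.
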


\begin{proof}
Let $s,s'\in \Zl1(\R,\thT)$, and assume that $s'\sim s$,
that is, $s'=t\hs s\hshs\theta(\tb)^{-1}$ for some $t\in T$.
Then
\begin{align*}
n\at s'= ns'\theta(\nb)^{-1}&=n\hs t\hs s\hshs\theta(\tb)^{-1}\hs\theta(\nb)^{-1}\\
&=n\hs t\hs n^{-1}\cdot n\hs s \hshs\theta(\nb)^{-1}\cdot
\theta(\nb)\hshs\theta(\tb^{-1})\hs\theta(\nb)^{-1}
=t'\cdot n\at s\cdot\theta({\bar{t'}})^{-1}
\end{align*}
where $t'=ntn^{-1}$.
Thus $n\at s'\sim n\at s$ and we see that indeed  $N_0$ acts on $\Ho1(\R,\thT)$.

If $t\in T\subset N_0$, then
\[t\at s=t\cdot s\cdot\theta(\tb)^{-1}\sim  s.\]
Thus  $T$ acts on $\Ho1(\R,\thT)$ trivially, and
$\at$ indeed induces an action of $W_0$ on $\Ho1(\R,\thT)$.
\end{proof}

We denote the action of $W_0$ on $\Ho1(\R,\thT)$ of Lemma \ref{l:action-at}  again by $\at$.
We denote the space of orbits of this action by $\Ho1(\R,\thT)/W_0$.

\begin{theorem}[Borovoi {\cite[Theorem 1]{Bo}, \cite[Theorem 9]{Borovoi-arXiv}}]
\label{t:Bor}
The embedding of the abelian group $\Zl1(\R,\thT)$ into $\Zl1(\R,\thG)$ induces a bijection
\[\Ho1(\R,\thT)/W_0\isoto \Ho1(\R,\thG).\]
\end{theorem}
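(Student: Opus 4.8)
The goal is to show that the natural map $\Ho1(\R,\thT)/W_0\to\Ho1(\R,\thG)$ is a bijection; this is a twisted, disconnected-coefficient analogue of the classical fact that for a connected compact (or reductive) group, every cohomology class is represented by a torus element and two such are cohomologous iff Weyl-conjugate. The plan is to establish surjectivity and injectivity separately, reducing both statements to the untwisted situation via the compact real structure.

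\emph{Surjectivity.} Given a cocycle $a\in\Zl1(\R,\thG)$, so $a\cdot\theta(\bar a)=1$, I would first bring $a$ into a maximal torus. The cleanest route is to pass to the auxiliary anti-regular involution $\sigma=\theta\circ\sigma_c$ and observe that $\sigma(a)=\theta(\bar a)=a^{-1}$, so $a$ is what one calls a $\sigma$-twisted involution; the element $a$ together with $\sigma$ generates a real structure, or more usefully, $\inn(a)\circ\sigma$ is again an anti-regular involution of $G$. By Proposition~\ref{p:AT-3-13} applied to this new real structure, after replacing $a$ by a cohomologous cocycle (i.e.\ acting by formula \eqref{e:cohomologous}) we may assume $\inn(a)\circ\sigma$ admits a Cartan involution compatible with the one already fixed, which amounts to saying $a$ normalizes a $\sigma_c$-stable maximal torus; conjugating that torus to $\TT$ by an element of $G^{\sigma_c}$ (all maximal tori of a compact group being conjugate under the compact group), we may assume $a\in N=\Nm_G(\TT)$. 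Next I would arrange $a\in N_0$: the image $\bar a\in W$ satisfies $\bar a\cdot\theta(\bar a)=1$ in $W$, and since $\theta$ acts trivially on $W_0$ (as in the proof of Lemma~\ref{l:n-th-s}) one checks the class of $a$ can be adjusted so that its Weyl component lies in $W_0$; equivalently $a\in N_0\cdot T$, and absorbing the $T$-part we get $a\in N_0$. Finally, for $a\in N_0$ the cocycle condition $a\theta(\bar a)=1$ combined with $\theta|_{T_0}=\id$ forces, after twisting by a suitable $t\in T$ via \eqref{e:cohomologous} (solving $a\cdot t\,\theta(\bar t)^{-1}\in T$), the element to land in $T$ itself, hence in $\Zl1(\R,\thT)$.

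\emph{Injectivity.} Suppose $s,s'\in\Zl1(\R,\thT)$ become cohomologous in $\Zl1(\R,\thG)$: there is $g\in G$ with $s'=g\,s\,\theta(\bar g)^{-1}$. I must produce $n\in N_0$ with $s'=n\at s$, i.e.\ $s'=n\,s\,\theta(\bar n)^{-1}$ in the sense of \eqref{e:at}. The standard argument: both $s$ and $s'$ lie in $T$, and $\inn(s)\circ\sigma$, $\inn(s')\circ\sigma$ are two real structures on $G$ that are conjugate (by $g$) and for which $\TT$ is a stable maximal torus with $\TT_0$ the maximal compact subtorus. Two such compact-part-compatible structures having the \emph{same} compact structure $\sigma_c\circ(\text{Cartan})$ means $\TT_0$ is a maximal compact torus for both; the element $g$ conjugating the structures can then, by the uniqueness of maximal compact tori up to conjugacy \emph{inside the relevant fixed-point group}, be modified on the left and right by elements of $T$ to land in $\Nm_G(\TT_0)=N_0$. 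Tracking the twisted-conjugation bookkeeping — replacing $g$ by $t'gt''$ with $t',t''\in T$ and using that this changes the relation by coboundaries on both sides — yields the required $n\in N_0$ with $s'\sim n\at s$ in $\Zl1(\R,\thT)$. That $W_0$ (not just $N_0$) is what acts is Lemma~\ref{l:action-at}.

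\emph{Main obstacle.} The routine part is the twisted-conjugation algebra; the genuine content is the two ``conjugate into $N_0$'' steps, where one must invoke the right conjugacy theorem. The subtle point is that one is not conjugating maximal tori in a connected group, but matching up two real structures (equivalently two Cartan involutions, via Proposition~\ref{p:AT-3-13}) while keeping a \emph{common} compact torus $\TT_0$; this is exactly where the hypothesis that $\sigma_c$ is compact and $\TT_0$ is a maximal compact torus of $\thG$ is used, and where I would lean on Proposition~\ref{p:AT-3-13} together with conjugacy of maximal tori in compact groups rather than trying to argue by hand. I expect the cleanest writeup actually cites Borovoi \cite{Bo}, \cite{Borovoi-arXiv} for this theorem as stated, since the argument there is precisely this reduction carried out carefully; the sketch above is the shape of that argument.
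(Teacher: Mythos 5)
First, note that the paper itself gives no proof of Theorem~\ref{t:Bor}: it is imported from \cite[Theorem 1]{Bo} and \cite[Theorem 9]{Borovoi-arXiv}, so your sketch can only be measured against the argument in those references, whose overall shape (reduction to conjugacy statements about maximal compact tori of the twisted forms) you have correctly identified. Your injectivity half is sound in outline: from $s'=g\hs s\hs\theta(\bar g)^{-1}$ the automorphism $\inn(g)$ carries the real structure $\inn(s)\circ\sigma$ to $\inn(s')\circ\sigma$; since $\Zm_G(T_0)=T$, any compact torus of either twisted form containing $\TT_0$ lies in $\thT$, so $\TT_0$ is a maximal compact torus of \emph{both}; hence $gT_0g^{-1}$ and $T_0$ are conjugate by some $h$ in the group of real points of $(G,\inn(s')\circ\sigma)$, and $n=hg\in N_0$ still satisfies $s'=n\hs s\hs\theta(\bar n)^{-1}$. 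The one input you must still prove or cite is the conjugacy of maximal compact tori of a connected reductive $\R$-group under its group of real points.

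The surjectivity half, however, has a genuine gap at its final step. For $a\in N_0$ and $t\in T$ one has $t\hs a\hs\theta(\bar t)^{-1}=a\cdot\bigl(a^{-1}ta\bigr)\hs\theta(\bar t)^{-1}\in aT$, so twisted conjugation by $T$ never changes the image of $a$ in $W_0=N_0/T$; one cannot ``solve $a\cdot t\hs\theta(\bar t)^{-1}\in T$'' unless $a$ already lies in $T$. This is not a removable difficulty: take $\GG$ the compact form of $\PGL_2\simeq\SO_3$ with $\theta=\id$ and $\TT$ the rotations about the $z$-axis, and let $a=\diag(1,-1,-1)$ be the rotation by $\pi$ about the $x$-axis; then $a\in N\smallsetminus T$, $a\cdot\theta(\bar a)=a^2=1$, and $a$ is cohomologous to $\diag(-1,-1,1)\in T$, but only via elements of $G$ that do not normalize $\TT$. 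Removing a nontrivial involutive Weyl component is precisely the content of surjectivity, and it is the step where the hypothesis that $\thT$ is fundamental ($\TT_0$ maximal compact in $\thG$) must be used; your preceding reductions --- that Proposition~\ref{p:AT-3-13} lets one assume $a$ normalizes a $\sigma_c$-stable maximal torus conjugate to $\TT$, and that the Weyl component can then be pushed into $W_0$ --- are asserted rather than derived and do not visibly invoke that hypothesis. The standard repair, which is what the cited references do, is to twist by $a$, choose a maximal compact torus of the twisted form $(G,\inn(a)\circ\sigma)$, and transport it to $\TT_0$ by the same conjugacy theorem used for injectivity, rather than descending stepwise $G\rightsquigarrow N\rightsquigarrow N_0\rightsquigarrow T$.
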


\begin{remark}
Write $\WW=\NN/\TT$. It is easy to see that $\hs_\theta\hm \WW(\R)=W_0$
and that the action $\at$ of $W_0$ on $\Ho1(\R,\hs_\theta\hm \TT)$
is the action of $\hs_\theta\hm \WW(\R)$ related to the cohomology exact sequence
\[\cdots\to\hs _\theta\hm \WW(\R)\to \Ho1(\R,\hs_\theta\hm\TT)\to \Ho1(\R,\hs_\theta \NN)\to\cdots\]
coming from the short exact sequence
\[1\to\hs_\theta\hm\TT\to \hs_\theta \NN\to\hs_\theta\hm\WW\to 1.\]
See Serre \cite[I.5.5, before Proposition 39]{Serre}.
\end{remark}

\begin{subsec}\label{ss:H1(R,T)}
Theorem \ref{t:Bor} reduces the computation of $\Ho1(\R,\thG)$ to computing $\Ho1(\R,\thT)$
and describing the orbit set for the above action of $W_0$ on $\Ho1(\R,\thT)$.
We compute $\Ho1(\R,\thT)$ here and postpone describing the orbit set of $W_0$
to Section~\ref{s:M} (see Proposition~\ref{p:T0-Z1-C}).

Let $s\in (T_0)_2$; then clearly $s\in \Zl1(\R,\TT_0)\subseteq \Zl1(\R,\thT)$.
For any  $s\in T_0\cap T_1$, the equalities $\theta(s)=s$ and $\theta(s)=s^{-1}$
imply $s^2=1$; hence $T_0\cap T_1\subseteq(T_0)_2$.
If $s\in T_0\cap T_1$, we have $s\in \Zl1(\R,\thT_1)$\hs,  because $s\in(T_0)_2\subset \Zl1(\R,\thT)$.
Furthermore, then $s$ is a coboundary in $\thT_1$
(because $\thT_1$ is split and by Hilbert's Theorem~90 $\Ho1(\R,\thT_1)=1$);
hence $s$ is a coboundary in $\thT$.
We see that the embedding $(T_0)_2\into\Zl1(\R,\thT)$ induces a homomorphism
\[(T_0)_2\hs/\left(\hs T_0\cap T_1\right)\to \Ho1(\R,\thT).\]
\end{subsec}

\begin{lemma}[{\cite[Lemma 1]{Bo}, \cite[Lemma 3]{Borovoi-arXiv}}]
\label{l:Bor}
The canonical homomorphism of finite abelian groups
$$(T_0)_2/\left(\hs T_0\cap T_1\hs\right)\to \Ho1(\R,\thT)$$
is an isomorphism.
\end{lemma}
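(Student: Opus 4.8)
The plan is to prove that the canonical homomorphism $(T_0)_2/(T_0\cap T_1)\to \Ho1(\R,\thT)$ is an isomorphism by treating surjectivity and injectivity separately, using the decomposition $\TT=\TT_0\cdot\TT_1$ and Hilbert's Theorem~90 for the split torus $\thT_1$. First I would make precise the source of the map. By the previous subsection~\ref{ss:H1(R,T)}, an element $s\in(T_0)_2$ lies in $\Zl1(\R,\TT_0)\subseteq\Zl1(\R,\thT)$, and $T_0\cap T_1\subseteq(T_0)_2$ maps to coboundaries in $\thT$, so the homomorphism is well defined on the quotient; both groups are finite because $(T_0)_2$ is finite and $\Ho1(\R,\thT)=\Ho1(\Gamma,T)$ is finite for a torus over $\R$.

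For \emph{surjectivity}, let $a\in\Zl1(\R,\thT)$, so $a\in T=T_0\cdot T_1$ with $a\cdot\theta(\bar a)=1$. Write $a=a_0a_1$ with $a_0\in T_0$, $a_1\in T_1$ (this decomposition need not be unique, but the ambiguity lies in $T_0\cap T_1$, which is harmless). Since $\theta$ acts trivially on $\TT_0$ and by inversion on $\TT_1$, and since both $\TT_0$ and $\TT_1$ are compact so that complex conjugation acts trivially on each and $\bar a_0\in T_0$, $\bar a_1\in T_1$, the cocycle condition reads $a_0\bar a_0\cdot a_1\bar a_1^{-1}=1$; projecting to $T_0$ and $T_1$ (modulo the finite intersection) gives $a_0\bar a_0\in T_0\cap T_1$ and $a_1\bar a_1^{-1}\in T_0\cap T_1$. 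The element $a_1$ is then (up to the intersection) a cocycle in $\thT_1$; since $\thT_1$ is split, Hilbert~90 gives $\Ho1(\R,\thT_1)=1$, so $a_1$ is a coboundary in $\thT_1$, hence in $\thT$, and $a\sim a_0$ in $\Zl1(\R,\thT)$. Finally $a_0\bar a_0=a_0^2\in T_0\cap T_1$, so $a_0$ has order dividing~$2$ modulo $T_0\cap T_1$; choosing a representative of order dividing~$2$ (possible since $\TT_0$ is a torus, so the multiplication-by-$2$ map is surjective) shows that the class of $a$ comes from $(T_0)_2$.

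For \emph{injectivity}, suppose $s\in(T_0)_2$ becomes a coboundary in $\thT$, i.e.\ $s=t\,\theta(\bar t)^{-1}$ for some $t\in T$. Writing $t=t_0t_1$ as above and using $\theta|_{\TT_0}=\id$, $\theta|_{\TT_1}=\text{inversion}$, $\bar t_i\in T_i$, we get $\theta(\bar t)^{-1}=\bar t_0^{-1}\bar t_1$, so $s=t_0\bar t_0^{-1}\cdot t_1\bar t_1$. Comparing $T_0$- and $T_1$-components modulo $T_0\cap T_1$: the $T_0$-part gives $s\equiv t_0\bar t_0^{-1}\pmod{T_0\cap T_1}$, and since $\TT_0$ is compact, $\bar t_0=t_0$, whence $t_0\bar t_0^{-1}=1$ and $s\in T_0\cap T_1$; thus $s$ is trivial in the source. (Equivalently, $\Ho1(\R,\TT_0)=1$ because $\TT_0$ is a compact, i.e.\ anisotropic, torus, and the map $(T_0)_2\to\Ho1(\R,\thT)$ factors in a way that pins the kernel to $T_0\cap T_1$.)

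The main obstacle I expect is bookkeeping the non-uniqueness of the decomposition $t=t_0t_1$ and making sure all the ``modulo $T_0\cap T_1$'' steps are consistent — in particular verifying that $a_0\bar a_0$ and $a_1\bar a_1^{-1}$ genuinely land in the finite group $T_0\cap T_1$ rather than in some larger subgroup, and that passing to a square-root representative of $a_0$ does not change the class. This is where one must use carefully that $\theta$ and $\sigma_c$ act as stated on $\TT_0$ and $\TT_1$ and that $T_0\cap T_1\subseteq(T_0)_2$, established in~\ref{ss:H1(R,T)}. Everything else is a direct application of Hilbert~90 and the triviality of $\Ho1(\R,\TT_0)$ for a compact torus.
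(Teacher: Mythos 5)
Your overall strategy --- decompose $T=T_0\cdot T_1$, kill the $T_1$-part by Hilbert 90 for the split torus $\thT_1$, and reduce to the compact torus $\TT_0$ --- is the right one (the paper only cites \cite{Bo} for this lemma, and the argument there has this shape). But the execution rests on a false premise: you repeatedly assume that $\sigma_c$ fixes $T_0$ pointwise because $\TT_0$ is compact (``$\bar t_0=t_0$'', ``$a_0\bar a_0=a_0^2$''). Compactness means that $\TT_0(\R)=T_0^{\sigma_c}$ is compact, not that $\sigma_c$ is trivial on $T_0=\TT_0(\C)$: in a coordinate $T_0\cong\C^\times$ with $\sigma_c(z)=\bar z^{-1}$, the element $t_0=2$ has $\bar t_0=1/2\ne t_0$ and $t_0\bar t_0=1\ne t_0^2$. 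Consequently, from $a\sim a_0$ with $a_0\bar a_0\in T_0\cap T_1$ you may only conclude that $a_0$ is (up to $T_0\cap T_1$) a cocycle of the compact torus $\TT_0$, i.e.\ $a_0\in(\R^\times)^k$ in coordinates --- not that $a_0$ is $2$-torsion modulo $T_0\cap T_1$. The missing step is that every cocycle of the \emph{compact} torus $\TT_0$ is cohomologous to an element of $(T_0)_2$: write $a_0=\epsilon\cdot r$ with $\epsilon\in\{\pm1\}^k$ and $r$ positive, and $r=s\,\bar s^{-1}$ for $s=r^{1/2}$. Your appeal to the surjectivity of multiplication by $2$ does not produce this (a square root of $a_0$ is not $2$-torsion unless $a_0$ already is).

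The parenthetical in your injectivity argument, ``$\Ho1(\R,\TT_0)=1$ because $\TT_0$ is anisotropic'', is the same misconception and is false --- it would in fact contradict the lemma, whose target is generally nontrivial. Hilbert 90 gives vanishing for split (more generally quasi-trivial) tori; for a compact torus one has $\Ho1(\R,\TT_0)\cong(T_0)_2\cong(\Z/2)^{\dim T_0}$. What injectivity actually needs is that the coboundary $t_0\hs\bar t_0^{-1}$, which in coordinates equals $|t_0|^2$ and is therefore \emph{positive}, can be $2$-torsion only if it equals $1$. The same positivity argument closes the gap you left in surjectivity, where $a_1$ is a cocycle of $\thT_1$ only ``up to'' the element $a_1\bar a_1^{-1}\in T_0\cap T_1$: that element is $|a_1|^{2}$ in coordinates, positive and $2$-torsion, hence equal to $1$, and only then does Hilbert 90 apply to $a_1$. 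So the skeleton is correct, but the two places where you wave at ``modulo $T_0\cap T_1$'' are exactly where the content of the lemma lives, and the statements you substitute there are incorrect.
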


\section{The twisted normalizer of a maximal compact torus}
\label{s:M}

\begin{construction}
We define the twisted normalizer of $\TT_0$ to be  the algebraic $\R$-subgroup $\NNt$ of $\NN_0$ such that
\[ \Nt = \{n\in N_0\ |\ n\cdot T_0\cdot \theta(n)^{-1}=T_0\}
= \{n\in N_0\ |\ n\cdot \theta(n)^{-1}\in T_0\}.\]
It depends only on $\ttau\in \Aut(\GG,\TT,B)$  and not on $t_\theta$. Indeed,
\[n\cdot\theta(n)^{-1}=n\cdot \tto\,\ttau(n)^{-1}\tto^{-1}=n\hs\tto\hs n^{-1}\cdot n\,\ttau(n)^{-1}\cdot \tto^{-1}\hs,\]
where the two extreme factors belong to $T_0$ whenever $n\in N_0$.
\end{construction}

\begin{lemma}\label{l:four}\
\begin{enumerate}
\renewcommand{\theenumi}{\roman{enumi}}
\item\label{i:n/th(n)} $n\cdot \theta(n)^{-1}\in (T_0)_2$ for all $n\in \Nt$.
\item\label{i:TM} $T\cap\Nt=\{t_0 t_1\ |\ t_0\in T_0,\ t_1\in T_1,\ t_1^2\in T_0\cap T_1\}$.
\item\label{i:M:T02}
The $\ath$-action of $\Nt$ preserves the three sets $T_0\supset\TT_0(\R)\supset(T_0)_2$.
\end{enumerate}
\end{lemma}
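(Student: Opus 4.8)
The plan is to verify the three assertions more or less independently, each by a short computation reducing to properties of $\theta$, $\TT_0$, and $\TT_1$ already established in Section~\ref{s:prelim-compact} and Notation~\ref{n:not2}. For \eqref{i:n/th(n)}: if $n\in\Nt$, then by definition $x:=n\cdot\theta(n)^{-1}\in T_0$. Applying $\theta$ (which acts trivially on $T_0$, hence fixes $x$) gives $\theta(x)=x$; on the other hand $\theta(x)=\theta(n)\cdot\theta^2(n)^{-1}=\theta(n)\cdot n^{-1}$ since $\theta^2=\id$. So $x=\theta(n)n^{-1}=(n\theta(n)^{-1})^{-1}\cdot(\text{conjugate})$; more cleanly, $x\cdot\theta(x)=n\theta(n)^{-1}\cdot\theta(n)n^{-1}=1$, and since $\theta$ fixes $x$ this reads $x^2=1$, i.e.\ $x\in(T_0)_2$. (One should note $x$ is automatically in $T_0$, so no subtlety about which torus it lies in.)

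For \eqref{i:TM}: an element $t\in T$ lies in $N_0$ automatically (as $T$ normalizes $T_0$), so $T\cap\Nt=\{t\in T\mid t\cdot\theta(t)^{-1}\in T_0\}$. Write $t=t_0t_1$ with $t_0\in T_0$, $t_1\in T_1$ using the almost direct product decomposition $\TT=\TT_0\cdot\TT_1$. Since $\theta$ is trivial on $T_0$ and inversion on $T_1$, we get $t\cdot\theta(t)^{-1}=t_0t_1\cdot(t_0t_1^{-1})^{-1}=t_1^2$ (up to the harmless ambiguity in the decomposition, which lands in $T_0\cap T_1\subseteq(T_0)_2$). Thus the condition $t\cdot\theta(t)^{-1}\in T_0$ becomes $t_1^2\in T_0$, equivalently $t_1^2\in T_0\cap T_1$ since $t_1^2\in T_1$. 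This is exactly the claimed description; I would spell out that the decomposition $t=t_0t_1$ is well defined up to multiplying $t_0$ and $t_1^{-1}$ by an element of the finite group $\TT_0\cap\TT_1$, so that $t_1^2 \bmod (T_0\cap T_1)$ is well defined and the set on the right-hand side is correctly specified.

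For \eqref{i:M:T02}: the $\ath$-action of $n\in\Nt$ on $s\in T$ is $n\ath s=nsn^{-1}\cdot n\theta(n)^{-1}$. If $s\in T_0$, then $nsn^{-1}\in T_0$ because $n\in N_0$ normalizes $\TT_0$, and $n\theta(n)^{-1}\in T_0$ by definition of $\Nt$; hence $n\ath s\in T_0$, so $\Nt$ preserves $T_0$. It preserves the subgroup $\TT_0(\R)$ since $\Nt$ is an $\R$-subgroup of $\GG$ and the whole setup is defined over $\R$ (alternatively: the automorphism $s\mapsto nsn^{-1}$ of the compact torus $\TT_0$ is automatically defined over $\R$, as noted in the proof of Lemma~\ref{l:n-th-s}, and translation by $n\theta(n)^{-1}\in(T_0)_2\subseteq\TT_0(\R)$ preserves real points). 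Finally $\Nt$ preserves $(T_0)_2$: for $s\in(T_0)_2$ we have $(nsn^{-1})^2=ns^2n^{-1}=1$ and $(n\theta(n)^{-1})^2=1$ by part \eqref{i:n/th(n)}, and these two elements of $T_0$ commute, so their product $n\ath s$ has order dividing~$2$.

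I do not expect any genuine obstacle here; all three parts are direct manipulations. The one point requiring a little care is bookkeeping around the non-uniqueness in the decomposition $\TT=\TT_0\cdot\TT_1$ (an \emph{almost} direct product, with finite intersection $\TT_0\cap\TT_1\subseteq(T_0)_2$) in part \eqref{i:TM}, and the reliance in part \eqref{i:M:T02} on the fact --- already used in Lemma~\ref{l:n-th-s} --- that all complex automorphisms of the compact torus $\TT_0$ are defined over~$\R$, which is what makes $n\theta(n)^{-1}\in(T_0)_2$ and conjugation by $n$ compatible with the real structure.
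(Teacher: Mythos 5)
Your proposal is correct and follows essentially the same route as the paper's proof: part \eqref{i:n/th(n)} by applying $\theta$ to $n\theta(n)^{-1}\in T_0$, part \eqref{i:TM} by computing $t\cdot\theta(t)^{-1}=t_1^2$ from the decomposition $t=t_0t_1$, and part \eqref{i:M:T02} by splitting the $\ath$-action into conjugation by $n$ and multiplication by $n\theta(n)^{-1}\in(T_0)_2$. Your extra bookkeeping (the ambiguity in $\TT=\TT_0\cdot\TT_1$, which in fact does not even affect $t_1^2$ since $(T_0\cap T_1)\subseteq(T_0)_2$, and the rationality of automorphisms of the compact torus $\TT_0$) only makes explicit what the paper leaves implicit.
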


\begin{proof}
\eqref{i:n/th(n)}
Put $t=n\cdot \theta(n)^{-1}\in T_0$.
Since $t\in T_0$, we have $\theta(t)=t$. Thus
\[t=\theta(t)=\theta(n)\cdot n^{-1}=t^{-1},\]
hence $t^2=1$, that is, $t\in (T_0)_2$, as required.

\eqref{i:TM} If $t\in T$,  $t=t_0 t_1$, where $t_0\in T_0$, $t_1\in T_1$,
then $t\cdot\theta(t)^{-1}=t_1^2$\,.
Hence $t\in T\cap \Nt$ if and only if $t_1^2\in T_0\cap T_1$.

\eqref{i:M:T02}
The conjugation action of any $n\in\Nt$ preserves each of the three sets.
The multiplication by $n\cdot\theta(n)^{-1}$ also preserves these sets by \eqref{i:n/th(n)}.
Now it follows from \eqref{e:ath} that the $\ath$-action preserves the three sets too.
\end{proof}

\begin{lemma}\label{l:N-M}\
\begin{enumerate}
\renewcommand{\theenumi}{\roman{enumi}}
\item\label{i:N0} $N_0=\NNt(\R)\cdot T$;
\item\label{i:M} $\Nt=\NNt(\R)\cdot T_0$.
\end{enumerate}
\end{lemma}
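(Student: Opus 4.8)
The plan is to deduce both statements from the polar (Cartan) decomposition $G=\GG(\R)\cdot P$ of the complex reductive group $G$ relative to its maximal compact subgroup $\GG(\R)=G^{\sigma_c}$, where $P=\exp(\ii\cdot\gl^{\sigma_c})$ and $\gl^{\sigma_c}=\{X\in\gl\mid\sigma_c(X)=X\}$ is the compact real form of $\gl$. Two features of this decomposition are what I will use: $\sigma_c$ inverts the $P$-part, so $\sigma_c(k\,p)=k\,p^{-1}$ for $k\in\GG(\R)$, $p\in P$; and $\theta$, which commutes with $\sigma_c$ by Remark~\ref{r:tau&comp}, preserves both factors $\GG(\R)$ and $P$. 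I expect the real content to lie in part~\eqref{i:M}; part~\eqref{i:N0} will then follow from \eqref{i:M} by a short computation inside the torus $T$.

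For part~\eqref{i:M}, the inclusion $\NNt(\R)\cdot T_0\subseteq\Nt$ is immediate: $\NNt(\R)\subseteq\Nt$ and $T_0\subseteq\Nt$ (since $\theta$ fixes $T_0$ pointwise, so $t\,\theta(t)^{-1}=1\in T_0$ for $t\in T_0$), and $\Nt$ is a group. For the converse I would take $n\in\Nt$, write its polar decomposition $n=k\,p$ with $k\in\GG(\R)$, $p\in P$, and show $p\in T_0$; granting this, $k=np^{-1}\in\Nt$ (because $p\in T_0\subseteq\Nt$) while $k\in\GG(\R)$, so $k\in\Nt\cap\GG(\R)=\NNt(\R)$, whence $n\in\NNt(\R)\cdot T_0$. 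To prove $p\in T_0$, set $x=n^{-1}\sigma_c(n)$ and establish three facts. (a)~$x=p^{-2}$: immediate from $\sigma_c(n)=k\,p^{-1}$. (b)~$x\in T$: exactly as in the proof of Lemma~\ref{l:n-th-s}, the torus $\TT_0$ being anisotropic, complex conjugation acts trivially on $W_0\subseteq\Aut(\TT_0)$, so $n$ and $\sigma_c(n)$ have the same image in $W_0=N_0/T$. (c)~$\theta(x)=x$: with $m=n\,\theta(n)^{-1}$, Lemma~\ref{l:four}\eqref{i:n/th(n)} gives $m\in(T_0)_2\subseteq\TT_0(\R)$, hence $\theta(n)=m^{-1}n$ and $\sigma_c(m)=m$; applying $\sigma_c$ to $\theta(n)=m^{-1}n$ and using $\sigma_c\theta=\theta\sigma_c$ yields $m^{-1}n\,\theta(x)=\theta(\sigma_c(n))=\sigma_c(\theta(n))=m^{-1}\sigma_c(n)=m^{-1}nx$, so $\theta(x)=x$. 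Combining (a)--(c), $p^{-2}=x$ lies in $(T\cap P)^\theta$. Now $T\cap P=\exp(\ii\cdot\tl^{\sigma_c})$ with $\tl^{\sigma_c}=\Lie\TT(\R)$, and on this vector group $\theta$ acts through its $\C$-linear action on $\tl$, for which $\tl^\theta=\tl_0$; hence $(T\cap P)^\theta=\exp(\ii\cdot\tl_0^{\sigma_c})\subseteq T_0$, where $\tl_0^{\sigma_c}=\Lie\TT_0(\R)$, and this subgroup, being the image of a vector space under $\exp$, has unique square roots. Therefore $p$, the unique square root in $P$ of $p^2\in\exp(\ii\cdot\tl_0^{\sigma_c})$, already lies in $\exp(\ii\cdot\tl_0^{\sigma_c})\subseteq T_0$, as required.

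For part~\eqref{i:N0}: since $T_0\subseteq T$, part~\eqref{i:M} gives $\NNt(\R)\cdot T=\Nt\cdot T$, and as $\Nt\subseteq N_0$ and $T\subseteq N_0$, only the inclusion $N_0\subseteq\Nt\cdot T$ needs proof. Given $n\in N_0$, I would seek $t_1\in T_1$ with $nt_1^{-1}\in\Nt$. Since $\theta(t_1)=t_1^{-1}$, one computes $(nt_1^{-1})\,\theta(nt_1^{-1})^{-1}=\inn(n)(t_1^{-2})\cdot n\,\theta(n)^{-1}$, and $n\,\theta(n)^{-1}\in T$ (again because $\theta$ acts trivially on $W_0$). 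As $\inn(n)$ preserves $\TT_0$ and $\TT$, we have $T_0\cdot\inn(n)(T_1)=\inn(n)(T_0T_1)=T$, so we may write $n\,\theta(n)^{-1}=t_0^{\ast}\cdot\inn(n)(t_1^{\ast})$ with $t_0^{\ast}\in T_0$ and $t_1^{\ast}\in T_1$; choosing $t_1\in T_1$ with $t_1^2=t_1^{\ast}$ (possible since $T_1$ is divisible), the factors $\inn(n)(t_1^{\ast})^{\pm1}$ cancel in the abelian group $T$ and $(nt_1^{-1})\,\theta(nt_1^{-1})^{-1}=t_0^{\ast}\in T_0$, so $nt_1^{-1}\in\Nt$ and $n\in\Nt\cdot T_1\subseteq\Nt\cdot T$; hence $N_0=\Nt\cdot T=\NNt(\R)\cdot T$ by part~\eqref{i:M}. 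The main obstacle is thus entirely contained in part~\eqref{i:M}, namely showing that the noncompact factor $p$ of $n\in\Nt$ lands in $T_0$ and not merely in $T$; this is precisely where the anisotropy of $\TT_0$ (giving $x\in T$), the polar decomposition (giving $x=p^{-2}$), and the defining relation of $\Nt$ (giving $\theta(x)=x$) must be used together. Part~\eqref{i:N0} is routine torus bookkeeping by comparison.
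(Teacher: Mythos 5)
Your proof is correct, but it runs in the opposite direction from the paper's and uses different machinery. The paper proves part (i) first and directly: since $\GG$ is compact, every $w\in W_0$ is represented by some $n\in\NN_0(\R)$, and one then corrects $n$ by a square root $s\in\TT_1(\R)$ of the $\TT_1(\R)$-component of $n\cdot\theta(n)^{-1}$, so that $m=s^{-1}n\in\NNt(\R)$ still represents $w$; part (ii) is then deduced from part (i) together with the description of $T\cap\Nt$ in Lemma \ref{l:four}\eqref{i:TM}, by splitting off the $T_1$-factor of $t\in\Nt\cap T$ (which lies in $\NNt(\R)$ because it has finite order). You instead attack part (ii) directly via the global Cartan decomposition $G=\GG(\R)\cdot P$ with $P=\exp(\ii\cdot\gl^{\sigma_c})$, showing through your steps (a)--(c) that the noncompact factor $p$ of any $n\in\Nt$ already lies in $T_0$, and you then recover part (i) from part (ii) by a square-root correction in the complex torus $T_1$ (your use of $\inn(n)(T_1)$ rather than $T_1$ itself is the right care to take, since $n$ need not commute with $\theta$). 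Both arguments are sound. The trade-off is that the paper's route needs only the elementary facts that the Weyl group of a compact group is realized in $\NN(\R)$ and that $\TT_1(\R)$ is divisible, whereas yours imports the existence, uniqueness, and subgroup-compatibility of the polar decomposition --- in particular the bijectivity of $\exp$ on $\ii\cdot\gl^{\sigma_c}$, which your unique-square-root step requires. In exchange, your computation that $x=n^{-1}\sigma_c(n)$ equals $p^{-2}$, lies in $T$, and is $\theta$-fixed gives a clean structural explanation of why membership in the twisted normalizer forces the noncompact part into $T_0$ rather than merely into $T$.
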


\begin{proof}
Since $\Nt \subseteq N_0$ and $N_0/T=W_0$\hs, for \eqref{i:N0}
it suffices to show that any element $w\in W_0$
can be represented by some element $m\in\NNt(\R)$.
Let $w\in W_0$.
Since $\GG$ is compact, $w$ can be represented by some $n\in \NN(\R)$.
Since $w\in W_0$, we have $n\in N_0\cap\NN(\R)= \NN_0(\R)$.
Set $t=n\at1=n\cdot\theta(n)^{-1}\in \GG(\R)$.
Then by  Lemma~\ref{l:n-th-s} we have $t\in \GG(\R)\cap T=\TT(\R)$.
Write $t=t_0\hs t_1$, where $t_i\in \TT_i(\R)$,
and let $s\in \TT_1(\R)$ be such that $s^2=t_1$.
We set $m=s^{-1} n$, then $m\in \NN_0(\R)$ and
\[m\cdot\theta(m)^{-1}=s^{-1}\hs n\hssss\theta(n)^{-1}\hs s^{-1}
         =t_1^{-1}t=t_0\in \TT_0(\R)\subset T_0,\]
hence $m\in \NNt(\R)$.
Clearly, $m$ represents $w$, which proves \eqref{i:N0}.

Now let $n\in\Nt$.
By \eqref{i:N0}, we may write $n=mt$, where $m\in\NNt(\R)$ and $t\in T$.
Then $t\in\Nt\cap T$.
By Lemma \ref{l:four}\eqref{i:TM} we have $t=t_0\hs t_1$,
where $t_0\in T_0$, $t_1\in T_1$, and $t_1^2\in (T_0)_2$.
Then $t_1\in \TT(\R)$ and hence $t_1\in \TT(\R)\cap\Nt\subset\NNt(\R)$.
Thus $n=mt_1\cdot t_0$, where $mt_1\in \NNt(\R)$ and $t_0\in T_0$, which proves \eqref{i:M}.
\end{proof}

By Lemma \ref{l:four}\eqref{i:M:T02}, the group $\Nt$ acts on $(T_0)_2$ via $\ath$\hs,
and we write $(T_0)_2/\Nt$
for the set of orbits of this action.
By Lemma \ref{l:N-M}\eqref{i:M} we have $\Nt=\NNt(\R)\cdot T_0$, and $T_0$ acts on $(T_0)_2$ trivially.
It follows that
\begin{equation*}
(T_0)_2/\Nt=(T_0)_2/\NNt(\R).
\end{equation*}
Note that the action $\ath$ of $\NNt(\R)$ coincides with $\at$.

\begin{lemma}\label{l:cosets-T0-cap-T1}
Each coset of $T_0\cap T_1$ in $(T_0)_2$ is an orbit for the action $\ath$ of $T\cap\Nt$ on $(T_0)_2$.
\end{lemma}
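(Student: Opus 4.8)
The claim is that, for the $\ath$-action of $T\cap\Nt$ on $(T_0)_2$, each orbit is exactly a coset of $T_0\cap T_1$. The plan is to compute the $\ath$-action of an arbitrary element of $T\cap\Nt$ directly from \eqref{e:ath}, using the explicit description of $T\cap\Nt$ provided by Lemma~\ref{l:four}\eqref{i:TM}. So first I would take $t\in T\cap\Nt$ and, by Lemma~\ref{l:four}\eqref{i:TM}, write $t=t_0t_1$ with $t_0\in T_0$, $t_1\in T_1$, and $t_1^2\in T_0\cap T_1$. Then for $s\in(T_0)_2$, since $t$ is central in $T\supseteq T_0$, the conjugation part of \eqref{e:ath} is trivial, so
\[
t\ath s = t\,s\,\theta(t)^{-1} = s\cdot t\,\theta(t)^{-1}.
\]
Now $\theta$ fixes $t_0$ and inverts $t_1$, so $t\,\theta(t)^{-1} = t_0t_1\cdot t_0^{-1}t_1 = t_1^2\in T_0\cap T_1$. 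Hence $t\ath s = s\cdot t_1^2$, with $t_1^2\in T_0\cap T_1$.

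Second, I would observe that this shows the $\ath$-orbit of $s$ under $T\cap\Nt$ is contained in the coset $s\cdot(T_0\cap T_1)$: the orbit is $\{s\cdot t_1^2\mid t_1\in T_1,\ t_1^2\in T_0\cap T_1\}$, and every $t_1^2$ occurring lies in $T_0\cap T_1$. Conversely, to see that the whole coset is obtained, I need that every element of $T_0\cap T_1$ is of the form $t_1^2$ for some $t_1\in T_1$ with $t_1^2\in T_0\cap T_1$ — but this is immediate: given $c\in T_0\cap T_1\subseteq T_1$, since $\TT_1$ is a (compact) torus it is divisible, so there is $t_1\in T_1$ with $t_1^2=c$, and then automatically $t_1^2=c\in T_0\cap T_1$, and the element $t=t_1\in T\cap\Nt$ by Lemma~\ref{l:four}\eqref{i:TM}. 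Thus $t\ath s = s\cdot c$, so the orbit is exactly $s\cdot(T_0\cap T_1)$. Finally I should note this coset genuinely lies in $(T_0)_2$: we have $s\in(T_0)_2$ and $T_0\cap T_1\subseteq(T_0)_2$ by the computation recalled in Subsection~\ref{ss:H1(R,T)} (if $s\in T_0\cap T_1$ then $\theta(s)=s=s^{-1}$, so $s^2=1$), and $(T_0)_2$ is a group, so $s\cdot(T_0\cap T_1)\subseteq(T_0)_2$; in particular the $\ath$-action is well-defined on this coset, consistently with Lemma~\ref{l:four}\eqref{i:M:T02}.

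There is essentially no obstacle here — the statement is a short direct computation — so the only thing to be careful about is bookkeeping: confirming that the conjugation term in \eqref{e:ath} drops out because elements of $T$ centralize $T_0$, and invoking divisibility of the torus $\TT_1$ for the surjectivity half. I would present the argument in exactly the two directions above (orbit $\subseteq$ coset via the formula $t\ath s = s\,t_1^2$; coset $\subseteq$ orbit via square roots in $T_1$), which together give the asserted equality of each orbit with a coset of $T_0\cap T_1$.
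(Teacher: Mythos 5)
Your proof is correct and follows essentially the same route as the paper's: decompose $t=t_0t_1$ via Lemma~\ref{l:four}\eqref{i:TM}, compute $t\ath s=st_1^2$ to get the inclusion of the orbit in the coset, and use divisibility of $T_1$ (taking a square root of an element of $T_0\cap T_1$) for the reverse inclusion. The extra remarks about the conjugation term vanishing and the coset lying in $(T_0)_2$ are harmless bookkeeping that the paper leaves implicit.
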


\begin{proof}
Any element  $t\in T\cap\Nt$ decomposes as $t=t_0t_1$ with $t_0\in T_0$,
$t_1\in T_1$, $t_1^2\in T_0\cap T_1$, by Lemma~\ref{l:four}\eqref{i:TM}.
For any $s\in(T_0)_2$ we have
\begin{equation}\label{e:t*s}
t\ath s=t\hs s\hshs\theta(t)^{-1}=st_1^2,
\end{equation}
hence the $(T\cap\Nt)$-orbit of $s$ is contained in the coset $s\cdot(T_0\cap T_1)$.
Conversely, if $s'\in s\cdot(T_0\cap T_1)$, $s'=st'$ with $t'\in  T_0\cap T_1$,
then we may write $t'=t_1^2$ for some $t_1\in T_1$.
By Lemma \ref{l:four}\eqref{i:TM} we have $t_1\in T\cap\Nt$,
and by \eqref{e:t*s}  we have $t_1\ath s=st'=s'$, which proves the lemma.
\end{proof}

\begin{proposition}\label{p:T0-Z1-C}
The inclusion $(T_0)_2\into \Zl1(\R,\thG)$ induces a bijection
\begin{equation*}
(T_0)_2/\Nt\,\isoto\, \Ho1(\R,\thG).
\end{equation*}
\end{proposition}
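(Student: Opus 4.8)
The plan is to deduce the bijection formally from Theorem~\ref{t:Bor} and Lemma~\ref{l:Bor}, controlling the $W_0$-action on $\Ho1(\R,\thT)$ via Lemma~\ref{l:N-M}\eqref{i:N0} and Lemma~\ref{l:cosets-T0-cap-T1}. First I would record, using the discussion preceding Lemma~\ref{l:cosets-T0-cap-T1}, that $(T_0)_2/\Nt=(T_0)_2/\NNt(\R)$ and that on $\NNt(\R)$ the action $\ath$ coincides with $\at$, hence with the restriction of the cocycle action \eqref{e:cohomologous} (since $\bar m=m$ for $m\in\NNt(\R)$); this is what makes the map $(T_0)_2/\Nt\to\Ho1(\R,\thG)$ in the statement well defined.

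Surjectivity is then immediate: by Theorem~\ref{t:Bor} every class in $\Ho1(\R,\thG)$ comes from $\Ho1(\R,\thT)$, and by Lemma~\ref{l:Bor} every class in $\Ho1(\R,\thT)$ is represented by some $s\in(T_0)_2$; so the composite $(T_0)_2\to\Zl1(\R,\thG)\to\Ho1(\R,\thG)$ is onto, and hence so is the induced map on $(T_0)_2/\Nt$.

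For injectivity, suppose $s,s'\in(T_0)_2$ have equal image in $\Ho1(\R,\thG)$. By Theorem~\ref{t:Bor} the classes $[s],[s']\in\Ho1(\R,\thT)$ lie in a single orbit of the $W_0$-action $\at$, so $n\at s\sim s'$ in $\Zl1(\R,\thT)$ for some $n\in N_0$. I would write $n=mt$ with $m\in\NNt(\R)$, $t\in T$ (Lemma~\ref{l:N-M}\eqref{i:N0}); since, by the proof of Lemma~\ref{l:action-at}, the subgroup $T$ acts trivially on $\Ho1(\R,\thT)$ while the action $\at$ respects the relation $\sim$, it follows that $n\at s\sim m\at s$, hence $m\at s\sim s'$ in $\Zl1(\R,\thT)$. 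As $\ath$ coincides with $\at$ on $\NNt(\R)$, we have $m\at s=m\ath s$, which lies in $(T_0)_2$ by Lemma~\ref{l:four}\eqref{i:M:T02}. Thus $m\ath s$ and $s'$ are two elements of $(T_0)_2$ with the same class in $\Ho1(\R,\thT)$, so by Lemma~\ref{l:Bor} they lie in the same coset of $T_0\cap T_1$, and by Lemma~\ref{l:cosets-T0-cap-T1} that coset is a single $(T\cap\Nt)$-orbit under $\ath$. Hence $s'=(t''m)\ath s$ for some $t''\in T\cap\Nt$, and since $t''m\in\Nt$ we conclude $s'\in\Nt\ath s$, i.e.\ $s$ and $s'$ have the same image in $(T_0)_2/\Nt$.

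I expect the only genuinely delicate point to be the passage from an arbitrary $n\in N_0$ — which enters only through the ``barred'' action $\at$ that matches the cocycle action — to an element of $\NNt(\R)$ on which $\at$ agrees with $\ath$ and which stabilizes $(T_0)_2$; this is exactly where Lemma~\ref{l:N-M}\eqref{i:N0} and the triviality of the $T$-action on $\Ho1(\R,\thT)$ do the essential work. The remainder is bookkeeping of the three compatible equivalences — $\sim$ in $\Zl1(\R,\thT)$, $\sim$ in $\Zl1(\R,\thG)$, and $\Nt$-orbits in $(T_0)_2$ — and requires no input beyond the four cited results.
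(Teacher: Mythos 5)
Your proof is correct and uses exactly the same ingredients as the paper's — Theorem~\ref{t:Bor}, Lemma~\ref{l:Bor}, Lemma~\ref{l:cosets-T0-cap-T1}, and the decomposition $N_0=\NNt(\R)\cdot T$ of Lemma~\ref{l:N-M} — with each playing the same role. The only difference is presentational: you run a direct surjectivity/injectivity check by chasing elements, whereas the paper organizes the same argument as an $\NNt(\R)$-equivariant bijection $\Zl1(\R,\thT)/T\isoto(T_0)_2/(T\cap\Nt)$ followed by passage to $\NNt(\R)$-orbits on both sides.
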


\begin{proof}
In view of Lemma~\ref{l:action-at} and Theorem~\ref{t:Bor},
it suffices to show that the inclusion $(T_0)_2\into\Zl1(\R,\thT)$
induces a bijection between the orbit sets for the actions $\ath$
of $\Nt$ on $(T_0)_2$ and $\at$ of $N_0$ on $\Zl1(\R,\thT)$.

We know from Lemma~\ref{l:Bor} that any orbit of $T\subset N_0$ in $\Zl1(\R,\thT)$ for the action $\at$
intersects $(T_0)_2$ in a coset of $T_0\cap T_1$.
By Lemma \ref{l:cosets-T0-cap-T1}, each coset is an orbit for the action $\ath$ of $T\cap\Nt$ on $(T_0)_2$.
Thus the inclusion $(T_0)_2\into \Zl1(\R,\thT)$
induces a bijective map of orbit sets
\begin{equation}\label{e:bijection-orb}
\Zl1(\R,\thG)/T\,\isoto\,(T_0)_2/(T\cap\Nt)
\end{equation}
which sends a $T$-orbit $\Om\subseteq\Zl1(\R,\thT)$ to the $(T\cap\Nt)$-orbit $\Om'=\Om\cap(T_0)_2$.

The group $\NNt(\R)$ acts on $(T_0)_2$ by $\ath$ and on $\Zl1(\R,\thT)$ by $\at$\hs, or,
 which is the same, by $\ath$\hs, so that $(T_0)_2$ is an $\NNt(\R)$-stable subset of $\Zl1(\R,\thT)$.
Since $\NNt(\R)$ normalizes $T$ and $T\cap\Nt$, we have induced actions of $\NNt(\R)$
on the orbit sets $\Zl1(\R,\thG)/T$ and $(T_0)_2/(T\cap\Nt)$,
so that the bijection \eqref{e:bijection-orb} is $\NNt(\R)$-equivariant.

By Lemma \ref{l:N-M}
any orbit of $N_0$ in $\Zl1(\R,\thT)$ is of the form $\Qm=\NNt(\R)\ath\Om$
for some $T$-orbit $\Om\subseteq\Zl1(\R,\thT)$ and, similarly,
any orbit of $\Nt$ in $(T_0)_2$ is of the form
$\Qm'=\NNt(\R)\ath\Om'$ for some $(T\cap\Nt)$-orbit $\Om'\subseteq(T_0)_2$.
Furthermore, if $\Om'=\Om\cap(T_0)_2$, then $\Qm'=\Qm\cap(T_0)_2$.
We see that
for any $N_0$-orbit $\Qm$ in $\Zl1(\R,\thT)$ under $\at$,
the intersection $\Qm\cap(T_0)_2$ is an orbit of $\Nt$ under $\ath$\hs.
This yields the desired bijection between the orbit sets.
\end{proof}

\begin{remark}
Proposition \ref{p:T0-Z1-C} describes $\Ho1(\R,\thG)$ in terms of the  regular action $\ath$
of the complex algebraic group $\Nt$.
\end{remark}

\section{Shifting the action of the twisted normalizer}
\label{s:shifting}

\begin{subsec}
In order to compute the Galois cohomology, we need an explicit description of the quotient set $(T_0)_2/\Nt$.

Consider the semidirect product $\Ghat=G\rtimes\langle\htau\rangle$,
where $\langle\htau\rangle$ is the group of order 1 or 2
whose generator $\htau$ acts on $G$ as $\ttau$.
Then $\Ghat$ acts on its normal subgroup $G$ by conjugation.
In particular, the element
\[\tthat:=\tto\cdot\htau\hs\in\hs G\cdot\htau\subset\Ghat\]
acts on $G$ by $\inn(\tto)\circ\ttau=\theta$.
\end{subsec}

\begin{notation}
\label{n:t0-tau-z2}
Set
\begin{equation*}\label{e:zzz}
z=\tthat^2=(\tto\cdot\htau)^2=\tto^2\,\in Z(G^\der)\cap T_0\subset\Ghat.
\end{equation*}
Recall that $(T_0\cdot\htau)_2^z$ denotes the subset of elements with square $z$ in $T_0\cdot\htau$.
\end{notation}

\begin{remark}\label{r:t0-tau-z2-t0-z2}
If $t\in T_0$, then $(t\cdot\htau)^2=t^2$.
Since $z$ belongs to the finite group $Z(G^\der)$, $t$~is an element of finite order
whenever $t\cdot\htau\in(T_0\cdot\htau)_2^z$; in particular, $t\in\TT_0(\R)$ in this case.
Thus
\[(T_0\cdot\htau)^z_2=(\TT_0(\R)\cdot\htau)^z_2=\{t\cdot\htau\hs\mid\, t\in \TT_0(\R),\ t^2=z\}.\]
\end{remark}

\begin{lemma}\label{l:t0-tau-z2}
\begin{enumerate}
\renewcommand{\theenumi}{\roman{enumi}}
\item\label{i:shift}
The bijective map
\begin{equation}\label{e:times-tau}
T_0 \stackrel\sim\longrightarrow {T_0\cdot\htau},\quad
 s        \mapsto\hs       s\cdot\tthat=s\tto\cdot\htau,
\end{equation}
is $\Nt$-equivariant,
where $\Nt$ acts on $T_0$  by $\ath$, see \eqref{e:ath},
and on ${T_0\cdot\htau}$ by conjugation.
\item\label{i:sqroots-z}
The map \eqref{e:times-tau} restricts to an $\Nt$-equivariant bijection
between $(T_0)_2$ and $(T_0\cdot\htau)^z_2$.
\end{enumerate}
\end{lemma}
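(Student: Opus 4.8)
The plan is to treat the bijectivity in both parts as essentially formal and to locate all the content in the two equivariance assertions and in the identification of the image of $(T_0)_2$. The map \eqref{e:times-tau} is visibly bijective, being the composite of translation by the fixed element $\tto\in T_0$ with the bijection $s\mapsto s\cdot\htau$. The single computational fact used throughout is that $\tthat=\tto\cdot\htau$ implements $\theta$ by conjugation inside $\Ghat$, that is, $\tthat\hs g\hs\tthat^{-1}=\theta(g)$ for $g\in G$, together with $\tthat^2=z$.

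For part~\eqref{i:shift}, I would fix $n\in\Nt$ and $s\in T_0$ and conjugate $s\cdot\tthat$ by $n$ in $\Ghat$, inserting $\tthat^{-1}\tthat$:
\[
n\cdot(s\cdot\tthat)\cdot n^{-1}
= n\hs s\cdot\bigl(\tthat\hs n^{-1}\hs\tthat^{-1}\bigr)\cdot\tthat
= n\hs s\cdot\theta(n)^{-1}\cdot\tthat
= (n\ath s)\cdot\tthat,
\]
the last equality being the definition~\eqref{e:ath} of $\ath$. Since $n\ath s\in T_0$ by Lemma~\ref{l:four}\eqref{i:M:T02}, the right-hand side lies in $T_0\cdot\tthat=T_0\cdot\htau$; because $\Nt$ is a group, this shows at once that conjugation by $\Nt$ preserves $T_0\cdot\htau$ (so the action in the statement is well defined) and that \eqref{e:times-tau} intertwines $\ath$ with conjugation.

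For part~\eqref{i:sqroots-z}, I would compute, for $s\in T_0$, using that $\theta$ acts trivially on $T_0$ and that $\tthat^2=z$:
\[
(s\cdot\tthat)^2 = s\cdot\bigl(\tthat\hs s\hs\tthat^{-1}\bigr)\cdot\tthat^2 = s\cdot\theta(s)\cdot z = s^2 z.
\]
Hence $(s\cdot\tthat)^2=z$ if and only if $s\in(T_0)_2$, so \eqref{e:times-tau} carries $(T_0)_2$ bijectively onto $\{\hs s\cdot\tthat\mid s\in(T_0)_2\hs\}$. To identify this target with $(T_0\cdot\htau)^z_2$, observe that every element of $T_0\cdot\htau$ has the form $s\cdot\tthat$ for a (unique) $s\in T_0$ and that its square equals $s^2z$ by the same computation, so it lies in $(T_0\cdot\htau)^z_2$ exactly when $s\in(T_0)_2$. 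The $\Nt$-equivariance of the restricted bijection is inherited from part~\eqref{i:shift}, using that $\ath$ preserves $(T_0)_2$ by Lemma~\ref{l:four}\eqref{i:M:T02}.

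I do not anticipate a genuine obstacle here: everything reduces to the conjugation identity $\tthat\hs g\hs\tthat^{-1}=\theta(g)$ together with Lemma~\ref{l:four}. The only points needing a little care are bookkeeping — verifying that the two actions in the statement are well defined, which falls out of the computation in part~\eqref{i:shift}, and recalling that $\theta$, hence $\tthat$-conjugation, is trivial on $T_0$, which is precisely what collapses $(s\cdot\tthat)^2$ to $s^2 z$.
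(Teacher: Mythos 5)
Your proof is correct and follows essentially the same route as the paper: the equivariance in part (i) is the identity $n\hs s\hs\theta(n)^{-1}\cdot\tthat=n\hs(s\cdot\tthat)\hs n^{-1}$ obtained from $\tthat\hs g\hs\tthat^{-1}=\theta(g)$, and part (ii) is the observation that $(s\cdot\tthat)^2=s^2z$ for $s\in T_0$. The extra bookkeeping you include (well-definedness of the conjugation action, identification of the image with $(T_0\cdot\htau)_2^z$) is a harmless elaboration of what the paper leaves implicit.
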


\begin{proof}
\eqref{i:shift} For $n\in \Nt$ and $s\in T_0$ we have
\[ n\hs s\hs\hs\theta(n)^{-1}\cdot\tthat=n\hs s\cdot \tthat\hs n^{-1}\tthat^{-1}\cdot \tthat
                =n\, (s\cdot\tthat)\, n^{-1}.\]
\eqref{i:sqroots-z} $(s\tto)^2=z$ if and only if $s^2=1$.
\end{proof}

\begin{corollary}[from Proposition \ref{p:T0-Z1-C} and Lemma \ref{l:t0-tau-z2}\eqref{i:sqroots-z}]
\label{c:times-tau}
The map
\[(T_0\cdot\htau)_2^z\to (T_0)_2\into \Zl1(\R,\thG),\quad t\cdot\htau\mapsto t\hs \tto^{-1}\]
 induces a bijection
$$(T_0\cdot\htau)_2^z\hs/\hs\Nt\isoto \Ho1(\R,\thG),$$
where $\Nt$ acts on $(T_0\cdot\htau)_2^z$ by conjugation.
\end{corollary}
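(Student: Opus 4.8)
The statement to prove is Corollary~\ref{c:times-tau}, which we obtain by simply combining two results already established in the excerpt: Proposition~\ref{p:T0-Z1-C}, giving the bijection $(T_0)_2/\Nt\isoto\Ho1(\R,\thG)$ for the $\ath$-action of $\Nt$ on $(T_0)_2$, and Lemma~\ref{l:t0-tau-z2}\eqref{i:sqroots-z}, giving an $\Nt$-equivariant bijection between $(T_0)_2$ (with the $\ath$-action) and $(T_0\cdot\htau)_2^z$ (with the conjugation action). So the proof is essentially a one-line composition of bijections; the only thing to be careful about is bookkeeping of the identifications.

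\begin{proof}
By Lemma~\ref{l:t0-tau-z2}\eqref{i:sqroots-z}, the map $s\mapsto s\hs\tto\cdot\htau$ is an $\Nt$-equivariant bijection $(T_0)_2\isoto(T_0\cdot\htau)_2^z$, where $\Nt$ acts on the source by $\ath$ and on the target by conjugation; its inverse sends $t\cdot\htau\in(T_0\cdot\htau)_2^z$ to $t\hs\tto^{-1}\in(T_0)_2$. Passing to orbit sets, this descends to a bijection
\[
(T_0\cdot\htau)_2^z\hs/\hs\Nt\,\isoto\,(T_0)_2/\Nt .
\]
Composing with the bijection $(T_0)_2/\Nt\isoto\Ho1(\R,\thG)$ of Proposition~\ref{p:T0-Z1-C}, which is induced by the inclusion $(T_0)_2\into\Zl1(\R,\thG)$, we obtain the asserted bijection $(T_0\cdot\htau)_2^z/\Nt\isoto\Ho1(\R,\thG)$. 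Unwinding the two maps, it is induced by $t\cdot\htau\mapsto t\hs\tto^{-1}$, viewed as an element of $(T_0)_2\subset\Zl1(\R,\thG)$, which is exactly the map in the statement.
\end{proof}

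The only potential obstacle is a notational one: making sure that the $\Nt$-action used in Proposition~\ref{p:T0-Z1-C} (the $\ath$-action on $(T_0)_2$) is literally the same action transported by Lemma~\ref{l:t0-tau-z2}\eqref{i:shift} to the conjugation action on $T_0\cdot\htau$, and that the restriction to square-$z$ elements in part~\eqref{i:sqroots-z} is compatible. Both are already spelled out in Lemma~\ref{l:t0-tau-z2}, so no new computation is needed; the proof is purely formal.
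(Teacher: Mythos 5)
Your proof is correct and is exactly the argument the paper intends: the corollary is stated as an immediate consequence of Proposition~\ref{p:T0-Z1-C} and Lemma~\ref{l:t0-tau-z2}\eqref{i:sqroots-z}, obtained by composing the $\Nt$-equivariant bijection $s\mapsto s\hs\tto\cdot\htau$ (whose inverse is $t\cdot\htau\mapsto t\hs\tto^{-1}$) with the bijection induced by the inclusion $(T_0)_2\into\Zl1(\R,\thG)$. Nothing is missing.
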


\section{Lattices, restricted roots, and affine Weyl groups}
\label{s:lattices}

\begin{subsec}
In Section~\ref{s:logarithm} we shall reduce the computation of Galois cohomology
to computations in the Lie algebra of $\TT_0$.
To this end, we need some preparations.

Recall that $X=\X^*(T)$ and $X^\vee=\X_*(T)$ are the character
 and cocharacter lattices of $T$, respectively.
We regard $X$ as a lattice in $\tl^*$ and $X^\vee$ as a lattice in $\tl$.
A standard property of the exponential mapping implies the formula
\begin{equation}\label{e:exp-2-pi-i-z}
\nu(\exp 2\pi\ii z)=\exp(2\pi\ii\hssh d\nu(z))=\exp(2\pi\ii z\cdot\nu)\quad\text{for any }\nu\in X^\vee,\ z\in\C,
\end{equation}
where $\nu$ is regarded as a homomorphism $\GmC\to T$ on the left-hand side,
and as a vector in $\tl$, identified with $d\nu(1)$, on the right-hand side.

The scaled exponential mapping
\begin{equation*}
\Exp\colon\tl \longrightarrow T,\quad x \mapsto \exp(2\pi{x})
\end{equation*}
is a universal covering and a homomorphism from the additive group of $\tl$ to $T$ with kernel $\ii X^\vee$.
Note that $\ii X^{\vee}\subset\ttl(\R):=\Lie\TT(\R)$ and the restriction of $\Exp$
to $\ttl(\R)$ is the universal covering of the compact torus $\TT(\R)$.
\end{subsec}

\begin{subsec}
Set $X_0=\X^*(T_0)$.
Then the restriction homomorphism $X\to X_0$ is surjective.
We have a natural embedding $X_0\into\tl_0^*$.
Then $X_0\subset\tl_0^*$ is the projection of $X\subset\tl^*$
corresponding to the embedding $\tl_0\into \tl$.

We have $\tl_0=\Lie(T_0)=\tl^\ttau$.
Set
$X_0^\vee=\X_*(T_0)$.
Then $X_0^\vee=(X^\vee)^\tau$ is a lattice in $\tl_0$.
The restriction
$$
\Exp_0:\tl_0\longrightarrow T_0
$$
of the scaled exponential map $\Exp$ to $\tl_0$ is a universal covering
and a homomorphism with kernel $\ii X_0^\vee\subset\ttl_0(\R)=\Lie\TT_0(\R)$.
Its restriction to $\ttl_0(\R)$ is a universal covering of $\TT_0(\R)$.
\end{subsec}

\begin{subsec}
We have a canonical projection map
\[\tl\to\tl_0,\quad x\mapsto\half(x+\ttau(x)).\]
with kernel $\tl_1$.
We denote by $\widetilde{X}_0^{\vee}$ the projection of $X^{\vee}$ to $\tl_0$,
that is,
\[\widetilde{X}_0^{\vee}=\{\half(\nu+\ttau(\nu))\mid \nu\in X^\vee\subset \tl \}.\]
There are inclusions
\[X_0^{\vee}\subseteq\widetilde{X}_0^{\vee}\subseteq\half X_0^{\vee}.\]
\end{subsec}

\begin{lemma}\label{l:e0-T0-cap-T1}
The exponential map $\Exp_0$ sends $\ihalf X_0^{\vee}$ onto $(T_0)_2$
and $\ii\widetilde{X}_0^{\vee}$ onto ${T_0\cap T_1}$, and thus
induces an isomorphism
\[\half X_0^{\vee}/X_0^{\vee}\simeq\ihalf X_0^{\vee}/\ii X_0^{\vee}\isoto(T_0)_2\hs,\]
which restricts to an isomorphism  $\widetilde{X}_0^{\vee}/X_0^\vee\isoto T_0\cap T_1$.
\end{lemma}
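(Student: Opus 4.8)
The statement is about the scaled exponential map $\Exp_0\colon\tl_0\to T_0$, whose kernel is $\ii X_0^\vee$ (by the previous subsection). The plan is to chase the definition of $(T_0)_2$ and $T_0\cap T_1$ through this covering. For the first claim, since $\Exp_0$ is a surjective homomorphism with kernel $\ii X_0^\vee$, an element $s=\Exp_0(x)$ lies in $(T_0)_2$ if and only if $s^2=1$, i.e.\ $\Exp_0(2x)=1$, i.e.\ $2x\in\ii X_0^\vee$, i.e.\ $x\in\ihalf X_0^\vee$. Hence $\Exp_0(\ihalf X_0^\vee)=(T_0)_2$, and the induced map $\ihalf X_0^\vee/\ii X_0^\vee\isoto(T_0)_2$ is an isomorphism of abelian groups by the first isomorphism theorem. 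The identification $\half X_0^\vee/X_0^\vee\simeq\ihalf X_0^\vee/\ii X_0^\vee$ is just multiplication by $\ii$, which is an isomorphism of these abstract groups (the factor $\ii$ is harmless: $X_0^\vee$ and $\ii X_0^\vee$ are lattices in $\tl_0$ and $\ttlR$ respectively, and scaling by $\ii$ carries one quotient onto the other).

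For the second claim I would first reinterpret $T_0\cap T_1$ in terms of the covering. Recall $\tl=\tl_0\oplus\tl_1$ with $\tl_1=\ker\big(x\mapsto\half(x+\ttau(x))\big)$, and $\Exp\colon\tl\to T$ has kernel $\ii X^\vee$. An element of $T_0$ lies in $T_1$ iff it can be written $\Exp(y)$ with $y\in\tl_1$. So take $x\in\tl_0$ with $\Exp_0(x)=\Exp(x)\in T_0\cap T_1$; then $\Exp(x)=\Exp(y)$ for some $y\in\tl_1$, hence $x-y\in\ii X^\vee$. Writing $\nu=\frac1\ii(x-y)\in X^\vee$ and applying the projection $\half(\,\cdot\,+\ttau(\,\cdot\,))$, which kills $y\in\tl_1$ and fixes $x\in\tl_0$, gives $x=\ihalf(\nu+\ttau(\nu))\in\ii\widetilde X_0^\vee$. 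Conversely, if $x=\ihalf(\nu+\ttau(\nu))$ with $\nu\in X^\vee$, then $x-\ihalf(\nu-\ttau(\nu))=\ii\nu\in\ii X^\vee$ and $\ihalf(\nu-\ttau(\nu))\in\tl_1$, so $\Exp_0(x)=\Exp\big(\ihalf(\nu-\ttau(\nu))\big)\in T_1$; it is also visibly in $T_0$. This shows $\Exp_0(\ii\widetilde X_0^\vee)=T_0\cap T_1$.

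Finally, combining the two computations: $\widetilde X_0^\vee\subseteq\half X_0^\vee$ (stated in the excerpt), so $\ii\widetilde X_0^\vee\subseteq\ihalf X_0^\vee$ and the inclusion of kernels $\ii X_0^\vee\subseteq\ii\widetilde X_0^\vee$ shows that the isomorphism $\ihalf X_0^\vee/\ii X_0^\vee\isoto(T_0)_2$ carries the subgroup $\ii\widetilde X_0^\vee/\ii X_0^\vee$ isomorphically onto its image $\Exp_0(\ii\widetilde X_0^\vee)=T_0\cap T_1$, equivalently $\widetilde X_0^\vee/X_0^\vee\isoto T_0\cap T_1$ after dividing by $\ii$.

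**Main obstacle.** Nothing here is deep; the one point requiring genuine care is the characterization of $T_0\cap T_1$ via $\tl_1$, namely justifying that $s\in T_0$ lies in $T_1$ exactly when it lifts into $\tl_1$. Since $\TT_1$ is a connected subtorus with $\Lie\TT_1=\tl_1$ and $\Exp$ restricted to $\tl_1$ surjects onto $T_1$, this is immediate, but it is the step where one must be explicit about working inside $T$ (with kernel $\ii X^\vee$) rather than inside $T_0$. Everything else is formal manipulation of exponential maps and the first isomorphism theorem.
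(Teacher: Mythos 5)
Your proposal is correct and follows essentially the same route as the paper: the statement about $(T_0)_2$ is the formal kernel computation, and both directions of the identity $\Exp_0(\ii\widetilde X_0^{\vee})=T_0\cap T_1$ are argued exactly as in the paper's proof (lift to $\tl=\tl_0\oplus\tl_1$, compare the two lifts modulo $\ii X^\vee$, and apply the projection $x\mapsto\half(x+\ttau(x))$). One harmless slip: in the converse direction $x-\ihalf(\nu-\ttau(\nu))$ equals $\ii\hs\ttau(\nu)$ rather than $\ii\nu$, but since $\ttau$ preserves $X^\vee$ this still lies in $\ii X^\vee$ and the argument is unaffected.
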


\begin{proof}
It suffices to show that $\Exp(\ii\widetilde{X}_0^\vee)=T_0\cap T_1$.

Let $\nu\in \widetilde{X}_0^\vee$. Then $\ii \nu\in \tl_0$; hence $\Exp(\ii \nu)\in T_0$.
We have $\nu=\half(\nu'+\ttau(\nu'))$ for some $\nu'\in X^\vee$.
Set $y=\ihalf(\ttau(\nu')-\nu')\in\tl_1$; then $\ii \nu=\ii \nu'+y$.
We see that
\[\Exp(\ii \nu)=\Exp(\ii \nu')\cdot \Exp(y)=1\cdot \Exp(y)\in T_1\hs.\]
Thus $\Exp(\ii \nu)\in T_0\cap T_1$.

Conversely, let $t\in T_0\cap T_1$\hs.
Write  $t=\Exp(x)$ with $x\in\tl_0$\hs, and $t=\Exp(y)$ with $y\in\tl_1$; then  $x=y+\ii \nu'$
for some $\nu'\in X^\vee$.
Since $\ttau(x)=x$ and $\ttau(y)=-y$, we get
\[x=\ihalf(\nu'+\ttau(\nu'))\in\ii\widetilde{X}_0^\vee\hs.\]
Thus $t=\Exp(x)\in \Exp(\ii\widetilde{X}_0^\vee)$\hs,
as required.
\end{proof}

\begin{subsec}
In consideration of roots and Weyl groups, we may reduce most of issues to the case of a semisimple group.
For simplicity, we assume from now till the end of Section~\ref{s:lattices} that $G$ is \emph{semisimple}.
Recall that $G^\ad$ denotes the adjoint group of $G$ and $G^\ssc$ is the universal cover of $G$.
Also, $T^{\ad}$ and $T^\ssc$ are the compatible with $T$ maximal tori in $G^{\ad}$ and $G^\ssc$, respectively.
We identify the Lie algebras of $T^{\ad}$ and $T^\ssc$ with $\tl=\Lie T$.

As usual, we write
$$P=\X^*(T^\ssc)\quad\text{and}\quad Q=\X^*(T^\ad);$$
these are the weight lattice and the root lattice.
Then $P$ and $Q$ naturally embed into $\tl^*$, and we obtain three lattices in $\tl^*$:
\[Q\subseteq X\subseteq P.\]
Also we write
$$P^\vee=\X_*(T^\ad)\quad\text{and}\quad Q^\vee=\X_*(T^\ssc);$$
these are the coweight lattice and the coroot lattice.
Then $P^\vee$ and $Q^\vee$ naturally embed into $\tl$, and we obtain three  lattices in $\tl$\,:
\[ P^\vee\supseteq X^\vee\supseteq Q^\vee. \]
Note that the lattices $P^\vee$ and $Q^\vee$ are dual to $Q$ and $P$, respectively.

We set
\[C= P^\vee/ Q^\vee,\quad {F}= X^\vee/ Q^\vee.\]
Then ${F}\subseteq C$ are finite abelian groups.
The scaled exponential map $\Exp^\ssc:\tl\to T^\ssc$
with kernel $\ii Q^\vee$ induces isomorphisms
\begin{align}\label{e:C-Z}
&C\longisoto\ker[T^\ssc\to T^\ad]=Z(G^\ssc)=\pi_1(G^\ad),\\
&F\longisoto\ker[T^\ssc\to T]=\pi_1(G).\notag
\end{align}
Thus ${F}$ is the fundamental group of $G$.
\end{subsec}

\begin{subsec}\label{ss:aff-W}
Recall that $W=\sN_G(T)/T$ is the Weyl group of $(G,T)$,  of $(G^\ssc,T^\ssc)$, and of $(G^\ad,T^\ad)$.
The scaled exponential maps from $\tl$ to $T$, to $T^\ssc$, and to $T^\ad$ are $W$-equivariant.
Since $W$ acts trivially on $Z(G^\ssc)$, we see from the $W$-equivariant
isomorphism \eqref{e:C-Z} that $W$ acts trivially on $C= P^\vee/ Q^\vee$.

We set
\[\Wtil= X^\vee\rtimes W,\quad \Wtil^\ad= P^\vee\rtimes W, \quad
\Wtil^\ssc= Q^\vee\rtimes W;\]
then $\Wtil^\ssc\subseteq \Wtil\subseteq \Wtil^\ad$.
Since $W$ acts on $P^\vee/ Q^\vee$ trivially, one checks
that $\Wtil^\ssc$ is a  normal subgroup of $\Wtil^\ad$ and
\[ \Wtil^\ad/\Wtil^\ssc\simeq C,\quad\Wtil/\Wtil^\ssc\simeq{F}.\]

The lattices $Q^\vee$, $X^\vee$, $P^\vee$ act on $\tl$ by translations:
\begin{equation}\label{e:X-action}
\nu\colon\ x\mapsto x+\ii \nu\quad  \text{for } \nu\in P^\vee,\ \ x\in\tl.
\end{equation}
Combined with the natural linear action of $W$ on $\tl$, the action \eqref{e:X-action}
gives rise to actions of $\Wtil^\ssc$, $\Wtil$, and  $\Wtil^\ad$ on $\tl$ by affine transformations.
Each of these actions preserves $\ttl(\R)$ (for which we added the multiplier $\ii$ in \eqref{e:X-action})
and restricts to an action on $\ttl(\R)$ by affine isometries preserving
the Euclidean structure induced by the Killing form on  $\gl$.
The group $\Wtil^\ssc$ regarded as a group of motions of the Euclidean space
$\ttl(\R)$ is a crystallographic group generated by reflections,
known as the \emph{affine} (or \emph{extended}) \emph{Weyl group} of the root system $\Rt=\Rt(G,T)$;
see Bourbaki \cite[Section VI.2.1]{Bourbaki},
Gorbatsevich, Onishchik, and Vinberg \cite[Section 3.3.6, Proposition 3.10(1)]{OV2},
and also Section~\ref{s:inner} below for details.
\end{subsec}

\begin{subsec}
For any character $\lambda\in X$, let $\bar\lambda\in X_0$ denote the restriction of $\lambda$ to $T_0$.
In particular, we consider the restricted roots $\bar\alpha$ for all $\alpha\in\Rt$.

Let $\gl^{\pm\ttau}$ denote the $(\pm1)$-eigenspaces for $\ttau$ in $\gl$.
Then $\gl^\ttau$ is the Lie algebra of a reductive (in fact, semisimple) subgroup $G^{\ttau}\subseteq G$
and $\gl^{-\ttau}$ is preserved under the adjoint action of $G^{\ttau}$.

If a root $\alpha\in\Rt$ is $\tau$-invariant, then $\bar\alpha$ is either a root of $\gl^\ttau$
or a weight of $\gl^{-\ttau}$ with respect to $T_0$,
depending on whether $\gl_{\alpha}\subset\gl^\ttau$ or $\gl_{\alpha}\subset\gl^{-\ttau}$, respectively.
If $\alpha\in\Rt$ is not $\tau$-invariant, then $\bar\alpha=\overline{\tau(\alpha)}$
is both a root of $\gl^\ttau$ and a weight of $\gl^{-\ttau}$
with eigenvectors $\e\alpha\pm\e{\tau(\alpha)}$, respectively,
where $\e\alpha$ is any generator of $\gl_\alpha$  and $\e{\tau(\alpha)}=\ttau(\e\alpha)\in \gl_{\tau(\alpha)}$.
\end{subsec}

\begin{proposition}\label{p:res-rt}\
\begin{enumerate}
\renewcommand{\theenumi}{\roman{enumi}}
\item\label{i:res-rt} The set $\Rtbar=\{\bar\alpha\mid\alpha\in\Rt\}$ is a root system in $X_0$ (possibly non-reduced).
\item\label{i:res-sm} The set $\Smbar=\{\bar\alpha\mid\alpha\in\Sm\}$ is a basis of $\Rtbar$.
\item\label{i:res-Weyl} The Weyl group $W(\Rtbar)$ of $\Rtbar$ is naturally isomorphic to $W_0$.
\end{enumerate}
\end{proposition}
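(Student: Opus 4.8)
The plan is to exploit the folding picture: $\Rtbar$ arises from $\Rt$ by the involution $\tau$ acting on the Dynkin diagram, so the statement is the classical description of restricted root systems for the symmetric space attached to the involutive automorphism $\ttau$ of the compact group $\GG$. I would first set up the Euclidean framework. Using the $\tau$-invariant positive system $\Rt_+$ and simple system $\Sm$ chosen in Subsection~\ref{ss:aff-W} (the $\theta$-stable, hence $\tau$-stable, choice made via a $\tau$-fixed cocharacter $\nu$), I restrict everything to $\tl_0^*=(\tl^*)^\tau$, identified with the orthogonal projection of $\tl^*$ along $\tl_1^*$ using the Killing form. For each $\alpha\in\Rt$ one has $\bar\alpha=\tfrac12(\alpha+\tau(\alpha))$ under this projection, and $\bar\alpha\neq0$ because $\Zm_G(T_0)=T$ forces every root to restrict nontrivially to $T_0$.

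For part~\eqref{i:res-rt} I would verify the root system axioms for $\Rtbar$ directly. Finiteness and $\Rtbar=-\Rtbar$ are immediate. The reflection axiom is the crux: for $\bar\alpha\in\Rtbar$ I must show $s_{\bar\alpha}$ maps $\Rtbar$ to itself and that $\langle\bar\beta,\bar\alpha^\vee\rangle\in\Z$. The clean way is to observe that $s_{\bar\alpha}$ is the restriction to $\tl_0^*$ of an element of $W$ that commutes with $\tau$: if $\alpha$ is $\tau$-fixed, take $s_\alpha$ itself; if not, then $\alpha$ and $\tau(\alpha)$ are orthogonal (two simple factors of the $A_1\times A_1$ folding to $A_1$, or more generally $\langle\alpha,\tau(\alpha)^\vee\rangle=0$ since $\GG$ is a compact group and $\alpha-\tau(\alpha)$ is not a root in that case — the $A_2$-folding, where $\alpha$ and $\tau(\alpha)$ are not orthogonal, cannot occur for a restriction to the fixed points when the grading is by $\Z/2$, so the product $s_\alpha s_{\tau(\alpha)}$ commutes with $\tau$ and restricts to $s_{\bar\alpha}$). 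Since $\tau$-equivariant elements of $W$ preserve $\tl_0^*$ and permute $\Rt$, they permute $\Rtbar$, giving stability under $s_{\bar\alpha}$; integrality of the Cartan pairings then follows from integrality upstairs in $\Rt$ together with the averaging formula $\bar\alpha=\tfrac12(\alpha+\tau(\alpha))$. Non-reducedness can indeed occur, exactly when some $\alpha$ and $2\bar\alpha$ both appear as restrictions (the classical $\mathsf{BC}_n$ phenomenon), so the parenthetical is accurate and needs no further comment.

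For part~\eqref{i:res-sm}, I would show $\Smbar$ spans the same cone of positive restricted roots: each $\bar\alpha$ with $\alpha\in\Rt_+$ is a nonnegative integer combination of elements of $\Smbar$, because writing $\alpha=\sum_{\beta\in\Sm}c_\beta\beta$ with $c_\beta\ge0$ and applying the projection gives $\bar\alpha=\sum_{\beta\in\Sm}c_\beta\bar\beta$; and the elements of $\Smbar$ are distinct and linearly independent modulo the $\tau$-orbit identification, spanning $\tl_0^*$ because $\Sm$ spans $\tl^*$. Combined with $\Rtbar=\Rtbar_+\cup(-\Rtbar_+)$ this is the characterization of a basis. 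For part~\eqref{i:res-Weyl} I would produce the isomorphism explicitly: restriction gives a homomorphism from $W^\tau=\{w\in W\mid w\tau=\tau w\}$ to $\Aut(\tl_0^*)$ landing in $W(\Rtbar)$ (by the generation of $W(\Rtbar)$ by the $s_{\bar\alpha}$, each lifted as above), and one checks it is surjective (every $s_{\bar\alpha}$ is hit) with kernel $\{w\in W^\tau\mid w|_{\tl_0}=\id\}=\{1\}$ — the last because an element of $W$ fixing the regular-in-$\tl_0$ vector $\nu$ pointwise is trivial, $\nu$ being chosen so that $\langle\alpha,\nu\rangle\neq0$ for all $\alpha\in\Rt$. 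Finally $W^\tau=N_0/T\!\mid_{\tl_0}$ identifies with $W_0$ since $\theta$ (equivalently $\tau$) acts trivially on $W_0$ by Lemma~\ref{l:n-th-s}, and $N_0$ modulo $T$ maps onto precisely the $\tau$-fixed Weyl group elements. The main obstacle is the bookkeeping in part~\eqref{i:res-rt} around the two types of $\tau$-orbits on roots and ruling out the $\mathsf A_2$-type folding inside the fixed subalgebra; once that case analysis is organized, the rest is the standard averaging argument.
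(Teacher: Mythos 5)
Two steps in your plan fail as written. First, in part~\eqref{i:res-rt} you rule out the $\AAA_2$-type folding, claiming that $\alpha$ and $\tau(\alpha)$ are orthogonal whenever $\tau(\alpha)\ne\alpha$. This is false: for the outer involution of type $^2\hm\AAA_{2\ell}$ the two middle simple roots $\alpha_\ell$ and $\alpha_{\ell+1}=\tau(\alpha_\ell)$ are adjacent in the Dynkin diagram, and this is precisely the source of the non-reduced restricted systems $\BBB\CCC_\ell$ listed in Section~\ref{s:outer} of the paper --- the very $\mathrm{BC}$-phenomenon you acknowledge in the same breath. In that case $s_\alpha s_{\tau(\alpha)}$ has order $3$, does not commute with $\tau$, and does not restrict to $s_{\bar\alpha}$; the correct $\tau$-equivariant lift is $s_{\alpha+\tau(\alpha)}$, the longest element of the $\AAA_2$ subsystem, using that $\alpha+\tau(\alpha)$ is then a $\tau$-fixed root with $\overline{\alpha+\tau(\alpha)}=2\bar\alpha$. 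The case analysis is salvageable, but as stated it both omits a case and contradicts your own remark about non-reducedness. Second, and more seriously, in part~\eqref{i:res-Weyl} the containment you need --- that every element of $W^\tau$ (equivalently of $W_0$) restricts on $\tl_0$ to an element of $W(\Rtbar)$ --- is asserted with a justification (``by the generation of $W(\Rtbar)$ by the $s_{\bar\alpha}$, each lifted as above'') that only yields the \emph{reverse} containment $W(\Rtbar)\subseteq W_0$. The inclusion $W_0\subseteq W(\Rtbar)$ is the nontrivial half, and it is exactly where the paper's proof does its work: given $w\in W_0$, one uses that $w(\Smbar)$ is again a basis of $\Rtbar$ to factor $w=w'w''$ with $w'\in W(\Rtbar)$ and $w''$ preserving $\Smbar$, and then uses the one-dimensionality of the $T_0$-eigenspaces in $\gl^{\pm\ttau}$ to show that $w''$ preserves $\Sm$ itself and hence is trivial. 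Nothing in your proposal supplies this step.

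For comparison of routes: the paper cites \cite[Section 3.3.9]{OV2} and \cite[Lemma 26.8]{Timashev} for parts~\eqref{i:res-rt} and~\eqref{i:res-sm} and only proves~\eqref{i:res-Weyl}, whereas you attempt all three from scratch; that is a legitimate, more self-contained choice, and your argument for~\eqref{i:res-sm} is essentially fine. For the inclusion $W(\Rtbar)\subseteq W_0$ the paper realizes each simple reflection $s_{\bar\alpha}$ inside $\Nm_{G^{\ttau}}(T_0)$ via the root vector $\e\alpha+\ttau(\e\alpha)$ of $\gl^{\ttau}$, which bypasses your two-case lifting into $W$ (and the $\AAA_2$ pitfall) entirely; you may want to adopt that device, and then add the basis-permutation argument to close the gap in~\eqref{i:res-Weyl}.
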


\begin{proof}
For \eqref{i:res-rt} and \eqref{i:res-sm} see Gorbatsevich, Onishchik, and Vinberg \cite[Section 3.3.9]{OV2},
in particular, Theorem~3.14(3) of loc.~cit., or Timashev \cite[Lemma 26.8]{Timashev}.
A proof of \eqref{i:res-Weyl} is implicit in \cite[Section 3.3.9]{OV2}
for the case of simple $G$; we provide a direct general argument.

Note that $W_0\subseteq W$ acts on $T$ preserving $T_0$, and the induced action of $W_0$ on $X$ preserves $\Rt$.
Hence $W_0$ can be identified with a group of automorphisms of $X_0$ preserving $\Rtbar$.
Each restricted simple root $\bar\alpha$ ($\alpha\in\Sm$)
is a root of $\gl^{\tau}$ with respect to $T_0$, whose root subspace is spanned by $\e\alpha+\e{\tau(\alpha)}$,
where, since  $\ttau$ respects the pinning $(\e\beta)$, we have  $\e\alpha+\e{\tau(\alpha)}=\e\alpha+\tau(\e\alpha)\in\gl^\ttau$.
Hence each simple reflection in $W(\Rtbar)$ and, furthermore, each element of $W(\Rtbar)$,
can be represented by some $n\in\Nm_{G^{\ttau}}(T_0)=N_0^{\ttau}$.
Therefore $W(\Rtbar)\subseteq W_0$.

On the other hand, each $w\in W_0$ induces an automorphism of $\Rtbar$;
hence the set $w(\Smbar)$ is a basis of $\Rtbar$.
Then $w(\Smbar)=w'(\Smbar)$ for some $w'\in W(\Rtbar)$, and $w''=w^{-1}w'$ preserves $\Smbar$.
We claim that $w''$ preserves $\Sm$, too. Indeed, if $\alpha\in\Sm$ and $\beta=w''(\alpha)$,
then $\bar\beta=w''(\bar\alpha)\in\Smbar$; hence $\bar\beta=\bar\gamma$ for some $\gamma\in\Sm$.
Since the eigenspaces of $T_0$ in $\gl^{\pm\tau}$ are one-dimensional by \cite[Section 3.3.9, Theorem 3.14(i)]{OV2},
it follows from the description of these eigenspaces given right before Proposition~\ref{p:res-rt}
that $\beta=\gamma$ or $\beta=\tau(\gamma)\in\Sm$. This proves our claim.
As $w''\in W$ preserves $\Sm$, we have $w''=1$, whence $w=w'\in W(\Rtbar)$.
Therefore $W_0\subseteq W(\Rtbar)$.
\end{proof}

\begin{subsec}\label{ss:s-res-rt-wt}
Denote by $T_0^\ad$ and $T_0^\ssc$
the subtori in $T^\ad$ and in $T^\ssc$,  respectively, with Lie algebra $\tl_0$\hs.
We set
$$Q_0=\X^*(T_0^\ad)\quad\text{and}\quad P_0=\X^*(T_0^\ssc).$$
We obtain three lattices  in $\tl_0^*$:
\[ Q_0\subseteq X_0\subseteq P_0\hs.\]
Note that $Q_0$ and $P_0$ are the projections to $\tl_0^*$ of $Q$ and $P$, respectively.

We also set
\[P_0^\vee=\X_*(T_0^\ad)=(P^\vee)^\tau\quad\text{and}\quad Q_0^\vee=\X_*(T_0^\ssc)=(Q^\vee)^\tau.\]
These lattices in $\tl_0$ are dual to $Q_0$ and $P_0$, respectively.

Choose simple roots $\alpha_1,\dots,\alpha_\ell\in\Sm$ which are representatives of the $\tau$-orbits in~$\Sm$.
Let $\alpha_i^{\vee}\in\Sm^\vee$, $\omega_i\in P$, and $\omega_i^\vee\in P^\vee$ denote the simple coroot,
fundamental weight and coweight corresponding to $\alpha_i$ ($i=1,\dots,\ell$), respectively.
\end{subsec}

\begin{lemma}\label{l:Q0P0}\
\begin{enumerate}
\renewcommand{\theenumi}{\roman{enumi}}
\item\label{i:Q0P0} $Q_0$ and $P_0$ are the root lattice and the weight lattice of $\Rtbar$, respectively.
The restricted simple roots $\bar\alpha_1,\dots\bar\alpha_\ell$ comprise the basis $\Smbar$ of $Q_0$
and the restricted fundamental weights $\bar\omega_1,\dots\bar\omega_\ell$ comprise a basis of $P_0$.
\item\label{i:dual-Q0P0} $Q_0^\vee$ and $P_0^\vee$ are the coroot lattice
and the coweight lattice of $\Rtbar$, respectively.
The simple coroots of $\Rtbar$ with respect to $\Smbar$, which comprise a basis
of $Q_0^\vee$, are $\bar\alpha_j^\vee=\alpha_j^\vee$
 and $\bar\alpha_k^\vee=\alpha_k^\vee+\tau(\alpha_k^\vee)$, where $j,k$ run over all integers $1,\dots,\ell$
 such that $\alpha_j$ is fixed and $\alpha_k$ is moved by $\tau$, respectively.
 The fundamental coweights of $\Rtbar$ with respect to $\Smbar$, which comprise a basis of $P_0^\vee$,
 are $\bar\omega_j^\vee=\omega_j^\vee$ and $\bar\omega_k^\vee=\omega_k^\vee+\tau(\omega_k^\vee)$, with the same notation.
\end{enumerate}
\end{lemma}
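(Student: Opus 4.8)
The plan is to deduce everything from Proposition~\ref{p:res-rt} together with the standard dictionary between subtori of $T^\ad$, $T^\ssc$ and sublattices of $\tl^*$, $\tl$. Recall that for a subtorus $S\subseteq T^\ad$ with Lie algebra $\mathfrak{s}\subseteq\tl$, the character lattice $\X^*(S)$ is the image of $\X^*(T^\ad)=Q$ under restriction, equivalently the projection of $Q\subset\tl^*$ along the annihilator of $\mathfrak{s}$; dually, $\X_*(S)=\X_*(T^\ad)\cap\mathfrak{s}=P^\vee\cap\tl_0=(P^\vee)^\tau$ since $\tl_0=\tl^\ttau$ and $P^\vee$ is $\tau$-stable. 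The same applies with $G^\ad$ replaced by $G^\ssc$, giving $P_0=\mathrm{pr}(P)$ and $Q_0^\vee=(Q^\vee)^\tau$. These identifications, together with the duality between $Q$ and $P^\vee$ and between $P$ and $Q^\vee$, are already recorded in the excerpt (subsections \ref{ss:QP} and \ref{ss:s-res-rt-wt}), so I would simply cite them.

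For part \eqref{i:Q0P0}: by Proposition~\ref{p:res-rt} the restricted roots $\Rtbar$ form a root system in $X_0$ with basis $\Smbar=\{\bar\alpha_i\}$. The root lattice of $\Rtbar$ is $\Z\Rtbar=\Z\Smbar$. Since $Q=\Z\Rt$ and every $\bar\alpha$ is the projection of $\alpha$, the projection of $Q$ to $\tl_0^*$ equals $\Z\Rtbar$; but that projection is exactly $Q_0=\X^*(T_0^\ad)$ by the dictionary above. Hence $Q_0$ is the root lattice of $\Rtbar$ and $\Smbar$ is a $\Z$-basis of it (a basis of a root system is always a basis of its root lattice). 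For $P_0$: by definition the weight lattice $P(\Rtbar)$ is the lattice dual to the coroot lattice $Q(\Rtbar^\vee)$ inside $\tl_0^*$. I claim $P_0=P(\Rtbar)$; the cleanest route is to identify their duals inside $\tl_0$, i.e.\ to prove part \eqref{i:dual-Q0P0} first, or to argue directly: $P_0=\mathrm{pr}(P)$ and $P$ is the lattice of $\Rt$-weights, whose projection is spanned by the $\bar\omega_i$; one then checks $\langle\bar\omega_i,\bar\alpha_j^\vee\rangle=\delta_{ij}$ using the formulas for $\bar\alpha_j^\vee$ from part \eqref{i:dual-Q0P0}, which shows $\{\bar\omega_i\}$ is the dual basis to the simple coroots of $\Rtbar$, hence a basis of $P(\Rtbar)$, and simultaneously that $\mathrm{pr}(P)=P(\Rtbar)$.

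For part \eqref{i:dual-Q0P0}: the coroot and coweight lattices of $\Rtbar$ live in $\tl_0$ and are dual (inside $\tl_0^*{\leftrightarrow}\tl_0$) to $P_0$ and $Q_0$ respectively, so once \eqref{i:Q0P0} is in place the lattice identifications $Q_0^\vee=(Q^\vee)^\tau$ and $P_0^\vee=(P^\vee)^\tau$ from \ref{ss:s-res-rt-wt} give that $Q_0^\vee$, $P_0^\vee$ are the coroot, coweight lattices of $\Rtbar$. The substantive point is the explicit description of the simple coroots $\bar\alpha_i^\vee$. Here I would use the Lie-algebra picture from the paragraph preceding Proposition~\ref{p:res-rt}: the $\bar\alpha$-root space of $\gl^\ttau$ for $\alpha\in\Sm$ is spanned by $\e\alpha$ if $\alpha$ is $\tau$-fixed and by $\e\alpha+\tau(\e\alpha)$ if $\alpha$ is moved. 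Computing the corresponding $\mathfrak{sl}_2$-triple inside $\gl^\ttau$, the coroot is $\alpha^\vee$ in the fixed case, and $\alpha^\vee+\tau(\alpha)^\vee$ in the moved case (note $\alpha\perp\tau(\alpha)$ when $\alpha$ is moved, since $\Sm$ is a basis, so the sum of the two commuting $\mathfrak{sl}_2$'s produces coroot $\alpha^\vee+\tau(\alpha)^\vee$; when $\alpha,\tau(\alpha)$ are not orthogonal, i.e.\ a type-$A_2$ Kac situation with $\tau$ of order $2$, the restricted root $\bar\alpha$ is $2\bar\alpha$-divisible and one must take the coroot of the \emph{longer} restricted root, which is again $\alpha^\vee+\tau(\alpha)^\vee$ up to the usual normalization — this is exactly the subtlety flagged in \ref{ss:QP} that $S^\vee$ need not be $\{\alpha^\vee:\alpha\in S\}$ for non-reduced systems). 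Having the simple coroots, the fundamental coweights $\bar\omega_i^\vee$ are determined as the dual basis to $\Smbar$ inside $P_0^\vee$; matching this against $(P^\vee)^\tau$ and using $\langle\omega_i,\alpha_j^\vee\rangle=\delta_{ij}$ gives $\bar\omega_j^\vee=\omega_j^\vee$, $\bar\omega_k^\vee=\omega_k^\vee+\tau(\omega_k^\vee)$ by a direct pairing check.

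The main obstacle is the coroot computation in \eqref{i:dual-Q0P0}, specifically handling uniformly the three local shapes of a $\tau$-orbit in $\Sm$ (fixed; two orthogonal moved simple roots; two non-orthogonal moved simple roots spanning an $A_2$), and being careful that for a non-reduced restricted root system the simple coroots are those associated to $\Smbar$ as a \emph{basis} of $\Rtbar^\vee$ — not the naive coroots of the $\bar\alpha_i$. I would dispatch this by a case check on rank-$\le 2$ root subsystems, citing \cite[Section 3.3.9]{OV2} for the one-dimensionality of the $T_0$-eigenspaces in $\gl^{\pm\tau}$, which is what forces the coroot formulas to be as stated.
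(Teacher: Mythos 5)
Your proposal is correct and follows essentially the same route as the paper: identify the lattices via the projection/invariants dictionary, reduce everything by duality to the explicit determination of the simple coroots of $\Rtbar$, and treat the three local shapes of a $\tau$-orbit in $\Sm$ separately, with the non-reduced case ($2\bar\alpha_k\in\Rtbar$) handled by taking the coroot of $2\bar\alpha_k$. The only difference is in how the coroot formula is verified — you compute $\mathfrak{sl}_2$-triples inside $\gl^{\ttau}$, while the paper notes that $\alpha_j^\vee$ and $\alpha_k^\vee+\tau(\alpha_k^\vee)$ are proportional to the simple coroots under the Killing identification and then fixes the normalization by checking that the pairing with $\bar\alpha_i$ equals $2$, or $1$ in the non-reduced case — and both verifications amount to the same pairing computations.
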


\begin{proof}
The claims about the bases of $Q_0$, $P_0$, $Q_0^\vee$, and $P_0^\vee$ are clear.
It is also clear that $Q_0$ is the root lattice of $\Rtbar$ and $\bar\alpha_i$ are the simple roots.
It follows that the dual lattice $P_0^\vee$ is the coweight lattice
of $\Rtbar$ and the vectors $\omega_j^\vee$ and $\omega_k^\vee+\tau(\omega_k^\vee)$,
which comprise the dual basis of $P_0^\vee$
for $\{\bar\alpha_1,\dots\bar\alpha_\ell\}$, are the fundamental coweights.

Under the identification of $\tl_0$ with $\tl_0^*$ via the Killing form,
the simple coroots of $\Rtbar$ are proportional to the simple roots $\bar\alpha_i$ of $\Rtbar$,
which are the projections of $\alpha_i\in\tl^*$ to $\tl_0^*$.
Similarly, the simple coroots $\alpha_i^\vee$ of $\Rt$ are proportional to $\alpha_i$. It follows that
the vectors $\alpha_j^\vee$ and $\alpha_k^\vee+\tau(\alpha_k^\vee)$ in $Q_0^\vee$
are proportional to the simple coroots of~$\Rtbar$.
To show that these are indeed the simple coroots,
it suffices to check that the pairing of such a vector with the respective simple root $\bar\alpha_i$ ($i=j$ or $k$)
is equal to $2$ unless $2\bar\alpha_i\in\Rtbar$, in which case the pairing should be equal to $1$.

We have $\langle\bar\alpha_j,\alpha_j^\vee\rangle=\langle\alpha_j,\alpha_j^\vee\rangle=2$.
Note that $2\bar\alpha_j\notin\Rtbar$.
Indeed, if $2\bar\alpha_j=\bar\alpha$ for some $\alpha\in\Rt$,
then $\langle\alpha,\alpha_j^\vee\rangle=4$, which is not possible.

Also we have
\[\lb\bar\alpha_k,\alpha_k^\vee+\tau(\alpha_k^\vee)\rb=\lb\alpha_k,\alpha_k^\vee+\tau(\alpha_k^\vee)\rb=2\]
if $\alpha_k$ and $\tau(\alpha_k)$ are not linked in $\D(G)$.
In this case we have $2\bar\alpha_k\notin\Rtbar$.
Indeed, if $2\bar\alpha_k=\bar\beta$ for some $\beta\in\Rt$, then
$\bar\alpha_k=\overline{\tau(\alpha_k)}=\bar\beta/2$ and
$$\lb\beta,\alpha_k^\vee+\tau(\alpha_k^\vee)\rb= \lb\alpha_k+\tau(\alpha_k),\hs\alpha_k^\vee+\tau(\alpha_k^\vee)\rb=4,$$
which may happen only if $\alpha_k=\beta=\tau(\alpha_k)$, a contradiction.

Finally, if $\alpha_k$ and $\tau(\alpha_k)$ are linked in $\D(G)$, then $\lb\alpha_k,\tau(\alpha_k^\vee)\rb =-1$.
In this case we have  $2\bar\alpha_k=\bar\beta$ for $\beta=\alpha_k+\tau(\alpha_k)\in\Rt$ and
\[\lb\bar\alpha_k,\alpha_k^\vee+\tau(\alpha_k^\vee)\rb=\lb\alpha_k,\alpha_k^\vee+\tau(\alpha_k^\vee)\rb=1.\]

Since the lattice  $Q_0^\vee$ is generated by the simple coroots $\alpha_j^\vee$
and $\alpha_k^\vee+\tau(\alpha_k^\vee)$, it is the coroot lattice of $\Rtbar$.
Hence the dual lattice $P_0$  is the weight lattice of $\Rtbar$.  The vectors $\bar\omega_i$,
which comprise the basis of $P_0$ dual to the basis of $Q_0^\vee$
consisting of the simple coroots $\alpha_j^\vee$, $\alpha_k^\vee+\tau(\alpha_k^\vee)$,
are the fundamental weights.
\end{proof}

\begin{subsec}
Let $\wt P^\vee_0$ and $\wt Q^\vee_0$ denote the projections of $P^\vee$ and $Q^\vee$ to $\tl_0$, respectively.
We have three lattices in $\tl_0$:
\[\wt Q^\vee_0\subseteq \wt X^\vee_0\subseteq \wt P^\vee_0\hs.\]
A basis of $\wt Q_0^\vee$ consists of all vectors $\bar\alpha_j^\vee$ and $\half\bar\alpha_k^\vee$,
and a basis of $\wt P_0^\vee$ consists of all vectors $\bar\omega_j^\vee$ and $\half\bar\omega_k^\vee$,
with the notation of Lemma~\ref{l:Q0P0}\eqref{i:dual-Q0P0}.

We set
\[C_0=\wt P^\vee_0/\wt Q^\vee_0,\quad {F}_0=\wt X^\vee_0/\wt Q^\vee_0;\]
then ${F}_0\subseteq C_0$.
The canonical epimorphisms
\[ P^\vee\to\wt P^\vee_0,\quad X^\vee\to \wt X^\vee_0,\quad Q^\vee\to\wt Q^\vee_0\]
induce epimorphisms
\[{F}\to {F}_0\quad\text{and}\quad C\to C_0\hs.\]

The lattices $\wt X_0^\vee$, $\wt P_0^\vee$, and $\wt Q_0^\vee$ are preserved by the action of $W_0$ on $\tl_0$
and, similarly to Subsection~\ref{ss:aff-W}, we define
\[\Wotil=\wt X^\vee_0\rtimes W_0,\quad \Wotil^\ad=\wt P^\vee_0\rtimes W,\quad \Wotil^\ssc=\wt Q^\vee_0\rtimes W,\]
so that $\Wotil^\ssc\subseteq\Wotil\subseteq\Wotil^\ad$,
 where the subgroup $\Wotil^\ssc$ is normal in $\Wotil^\ad$, and
\[\Wotil^\ad/\Wtil^\ssc\simeq C_0,\quad\Wotil/\Wotil^\ssc\simeq F_0.\]

The action \eqref{e:X-action} restricts to an action of $\wt X_0^\vee$,
$\wt P_0^\vee$, and $\wt Q_0^\vee$ on $\ttl_0(\R)$ by translations,
which, in turn, gives rise to an action of $\Wotil$, $\Wotil^\ad$, and $\Wotil^\ssc$ on $\ttl_0(\R)$ by affine isometries.
\end{subsec}

\begin{lemma}[{\cite[Section 3.3.10, Proposition 3.15(1)]{OV2}}]
The group $\Wotil^\ssc$  regarded as a group of motions of the Euclidean space $\ttl_0(\R)$
is a crystallographic group generated by reflections along the hyperplanes
$\{\bar\alpha(x)=\ii k\}$ and $\{\bar\beta(x)=\ii k/2\}$ ($k\in\Z$)
over all roots $\bar\alpha$ of $\gl^\ttau$ and all weights $\bar\beta$ of $\gl^{-\ttau}$.
\end{lemma}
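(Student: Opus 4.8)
The plan is to prove the two halves of the assertion in turn, the structural ``generated by reflections'' part being the substantial one. \emph{Crystallographicity} comes essentially for free: by definition $\Wotil^\ssc=\wt Q^\vee_0\rtimes W_0$, where $W_0=W(\Rtbar)$ is finite by Proposition~\ref{p:res-rt}\eqref{i:res-Weyl}, and $\wt Q^\vee_0$ --- the image of the coroot lattice $Q^\vee\subset\tl$ under the projection $x\mapsto\half(x+\ttau(x))$ onto $\tl_0$ --- is a full-rank \emph{lattice} in $\tl_0$: it is discrete because that projection carries $Q^\vee$ into $\half(Q^\vee)^\tau=\half Q^\vee_0$, and of full rank because it contains $Q^\vee_0=(Q^\vee)^\tau$. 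Hence, acting on $\ttl_0(\R)$ through \eqref{e:X-action}, the group $\Wotil^\ssc$ is a discrete, cocompact group of affine isometries, i.e.\ a crystallographic group.

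Next I would show that each of the listed reflections lies in $\Wotil^\ssc$. The reflection in a hyperplane $\{\bar\gamma(x)=\ii c\}$ with $\bar\gamma\in\Rtbar$ and $c\in\half\Z$ has linear part the reflection $s_{\bar\gamma}\in W(\Rtbar)=W_0$ and translation part a scalar multiple of $\bar\gamma^\vee$, so what must be checked is that this translation always lies in $\wt Q^\vee_0$. Because $\wt Q^\vee_0$ and the whole arrangement are stable under $W_0$, and every root of $\gl^\ttau$ (resp.\ weight of $\gl^{-\ttau}$) is $W_0$-conjugate to one arising from a simple root $\alpha_i$, it suffices to treat the restricted simple roots $\bar\alpha_i$; there one compares directly with the basis $\{\bar\alpha^\vee_j,\ \half\bar\alpha^\vee_k\}$ of $\wt Q^\vee_0$, using Lemma~\ref{l:Q0P0}\eqref{i:dual-Q0P0} together with the identification, recalled just before Proposition~\ref{p:res-rt}, of the $T_0$-eigenspaces inside $\gl^{\pm\ttau}$, running through the cases where $\alpha_i$ is $\tau$-fixed, where $\alpha_i$ is $\tau$-moved with $\tau(\alpha_i)$ unlinked to $\alpha_i$, and where $\alpha_i$ is $\tau$-moved with $\tau(\alpha_i)$ linked to $\alpha_i$ (the last giving the non-reduced restricted roots).

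Conversely, let $\Wohat\subseteq\Wotil^\ssc$ be the subgroup generated by the listed reflections. Taking $c=0$ shows $\Wohat$ contains all $s_{\bar\gamma}$, $\bar\gamma\in\Rtbar$, hence $\Wohat\supseteq W_0$. Composing two reflections in parallel hyperplanes of the arrangement yields a translation, and carrying this out for the hyperplanes through the restricted simple roots and then conjugating by $W_0$ produces --- again via Lemma~\ref{l:Q0P0}\eqref{i:dual-Q0P0} --- translation by each member of the basis $\{\bar\alpha^\vee_j,\ \half\bar\alpha^\vee_k\}$ of $\wt Q^\vee_0$. Thus $\Wohat$ contains the full translation subgroup $\wt Q^\vee_0$, whence $\Wohat=\wt Q^\vee_0\rtimes W_0=\Wotil^\ssc$; together with the previous paragraph this proves the claim.

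The one genuinely delicate point is the bookkeeping when $\Rtbar$ is non-reduced, i.e.\ of type $BC_\ell$ coming from the folding $A_{2\ell}\rightsquigarrow BC_\ell$: there one must keep straight which of $\bar\alpha_k$ and $2\bar\alpha_k$ is a root of $\gl^\ttau$ and which a weight of $\gl^{-\ttau}$, track the two normalizations $\bar\alpha^\vee_k=\alpha^\vee_k+\tau(\alpha^\vee_k)$ and $\half\bar\alpha^\vee_k$, and verify that the union of the two families $\{\bar\alpha(x)=\ii k\}$ and $\{\bar\beta(x)=\ii k/2\}$ reproduces exactly the spacing encoded by $\wt Q^\vee_0$. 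One could instead present $\Wotil^\ssc$ as the affine Weyl group of a suitably rescaled --- possibly non-reduced, ``twisted'' --- affine root system and invoke the general theory of Bourbaki~\cite[Section~VI.2]{Bourbaki}, but this only relocates the same case analysis into a matching of conventions.
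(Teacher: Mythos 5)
The paper offers no proof of this lemma at all: it is quoted verbatim from Gorbatsevich--Onishchik--Vinberg \cite[Section 3.3.10, Proposition 3.15(1)]{OV2}, so there is no in-paper argument to compare against, and your proposal genuinely supplies something the text omits. Your strategy is sound and the deferred bookkeeping does close. For the record: the reflection in $\{\bar\gamma(x)=\ii c\}$ is $x\mapsto s_{\bar\gamma}(x)+\ii c\,\bar\gamma^\vee$ (with $\bar\gamma^\vee=2\bar\gamma/(\bar\gamma,\bar\gamma)$), and the product of the reflections at levels $c$ and $c'$ is translation by $\ii(c'-c)\bar\gamma^\vee$; checking membership in $\wt Q_0^\vee$ then reduces, exactly as you say, to the three identities (i) for $\tau$-fixed $\alpha_j$, only $\alpha_j$ restricts to $\bar\alpha_j$, so $\bar\alpha_j$ occurs only as a root of $\gl^\ttau$ and the minimal translation is $\alpha_j^\vee$; (ii) for $\tau$-moved unlinked $\alpha_k$, $\bar\alpha_k$ is both a root of $\gl^\ttau$ and a weight of $\gl^{-\ttau}$ (eigenvectors $\e{\alpha_k}\pm\e{\tau(\alpha_k)}$), and $(\ii/2)\bar\alpha_k^\vee=\ii\cdot\half(\alpha_k^\vee+\tau(\alpha_k^\vee))$ is the corresponding basis vector of $\wt Q_0^\vee$; (iii) for linked $\alpha_k$, the long root $2\bar\alpha_k$ is a weight of $\gl^{-\ttau}$ only, $(2\bar\alpha_k)^\vee=\alpha_k^\vee+\tau(\alpha_k^\vee)$, and the half-integer levels again yield translation by $\half(\alpha_k^\vee+\tau(\alpha_k^\vee))$. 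Three small points of hygiene: ``weights of $\gl^{-\ttau}$'' must be read as \emph{nonzero} weights (the zero weight space $\tl_1$ contributes no hyperplanes); the reduction to simple restricted roots uses that $W_0$ is realized inside $G^\ttau$ (Lemma \ref{l:N-0-tau}), so that $W_0$-conjugation preserves the dichotomy ``root of $\gl^\ttau$'' versus ``weight of $\gl^{-\ttau}$'' and one must include the doubled simple roots $2\bar\alpha_k$ among the representatives; and your symbol $\Wohat$ for the group generated by the reflections collides with the paper's $\Wohat=\Nt/T^\ttau$ from Section \ref{s:logarithm} and should be renamed.
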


The group $\Wotil^\ssc$ is known as the \emph{affine Weyl group} associated with $\tau$;
see Onishchik and Vinberg \cite[Section 4.4.5]{OV},
Gorbatsevich, Onishchik, and Vinberg \cite[Section 3.3.10]{OV2},
and also Sections~\ref{s:outer}--\ref{s:nas} below for details.

\section{Logarithm of the action of the twisted normalizer}
\label{s:logarithm}

\begin{subsec}
We wish to describe $(T_0\cdot\htau)_2^z\hs/\hs \Nt=(\TT_0(\R)\cdot\htau)_2^z\hs/\hs \Nt$, where
the group $\Nt$ acts on the $\C$-variety  $T_0\cdot\htau$ by conjugation:
\begin{align}\label{e:Ntau-action}
n\colon\   t\cdot\htau\mapsto n\hs(t\cdot\htau)\hs n^{-1}&=n\hs t\hs\ttau(n)^{-1}\cdot\htau\\ \notag
    &=n\hs t\hs n^{-1}\cdot n\hs \ttau(n)^{-1}\cdot\htau
\quad\text{for }n\in \Nt,\, t\in T_0\hs.
\end{align}

\begin{lemma}\label{l:Ntau-action}
The kernel of the action \eqref{e:Ntau-action}  is $T^\ttau=T_0\cdot(T_1)_2$.
\end{lemma}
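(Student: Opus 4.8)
The plan is to determine, for each $t\in T_0$, the stabilizer in $\Nt$ of the point $t\cdot\htau$ under the conjugation action \eqref{e:Ntau-action}, and then intersect these stabilizers over all $t$. First I would rewrite the action using the defining relation $\htau\hs g\hs\htau^{-1}=\ttau(g)$ of the semidirect product $\Ghat=G\rtimes\langle\htau\rangle$: for $n\in\Nt$ and $t\in T_0$ one has
\[ n\hs(t\cdot\htau)\hs n^{-1}=\bigl(n\hs t\hs n^{-1}\cdot n\,\ttau(n)^{-1}\bigr)\cdot\htau, \]
where $n\hs t\hs n^{-1}\in T_0$ because $n\in N_0=\Nm_G(T_0)$. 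Since an element of $\Ghat$ of the form $g\cdot\htau$ determines $g$ uniquely, the equality $n\hs(t\cdot\htau)\hs n^{-1}=t\cdot\htau$ is equivalent to $n\hs t\hs n^{-1}\cdot n\,\ttau(n)^{-1}=t$ in $G$; in particular $n\,\ttau(n)^{-1}=(n\hs t\hs n^{-1})^{-1}t\in T$.

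Taking $t=1$ shows that any $n$ in the kernel satisfies $n\,\ttau(n)^{-1}=1$, i.e.\ $\ttau(n)=n$. Substituting this back, the remaining condition becomes $n\hs t\hs n^{-1}=t$ for all $t\in T_0$, i.e.\ $n\in\Zm_G(T_0)=T$ by Lemma~\ref{l:Z(T0)}. Hence the kernel is contained in $T^\ttau$. For the reverse inclusion I would first note $T^\ttau\subseteq\Nt$: indeed $T^\ttau\subseteq T=\Zm_G(T_0)\subseteq N_0$, and for $n\in T^\ttau$ the element $\theta(n)=t_\theta\,\ttau(n)\,t_\theta^{-1}=t_\theta\,n\,t_\theta^{-1}=n$ (as $n,t_\theta\in T$ and $T$ is abelian), so $n\,\theta(n)^{-1}=1\in T_0$. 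Then any $n\in T^\ttau$ commutes with every $t\in T_0$ and with $\htau$, hence fixes $t\cdot\htau$ for all $t$. This proves $\ker=T^\ttau$.

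Finally I would identify $T^\ttau$ with $T_0\cdot(T_1)_2$. Using the almost direct product decomposition $T=T_0\cdot T_1$, on which $\ttau$ acts trivially on $T_0$ and by inversion on $T_1$ (Notation~\ref{n:not2}), write $t=t_0t_1$ with $t_0\in T_0$, $t_1\in T_1$; then $\ttau(t)=t_0\hs t_1^{-1}$, so $\ttau(t)=t$ forces $t_1^2=1$, i.e.\ $t_1\in(T_1)_2$, and conversely any such $t$ is $\ttau$-fixed. Here the manipulations are unambiguous because the overlap $T_0\cap T_1$ consists of elements of order dividing $2$, as noted in Subsection~\ref{ss:H1(R,T)}. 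Altogether $\ker=T^\ttau=T_0\cdot(T_1)_2$.

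I do not expect a real obstacle: the whole argument is a short computation inside $\Ghat$, and the only places that need a little attention are correctly applying the relation $\htau\hs g\hs\htau^{-1}=\ttau(g)$ and verifying the easy inclusion $T^\ttau\subseteq\Nt$.
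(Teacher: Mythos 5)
Your proposal is correct and follows essentially the same route as the paper: substitute $t=1$ to get $\ttau(n)=n$, deduce $n\in\Zm_G(T_0)=T$ from the remaining condition, and identify $T^\ttau$ with $T_0\cdot(T_1)_2$ via the decomposition $t=t_0t_1$. You merely spell out the reverse inclusion (including $T^\ttau\subseteq\Nt$), which the paper dismisses as clear.
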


\begin{proof}
Let $n\in \Nt\subset N_0$ be in the kernel of the action \eqref{e:Ntau-action}.
 Substituting $t=1$, we obtain that $\ttau(n)=n$.
It follows that for all $t\in T_0$ we have $ntn^{-1}=t$, and hence $n\in T$.
Since  $\ttau(n)=n$, we conclude that $n\in T^\ttau$.

For $t=t_0t_1\in T$ with $t_0\in T_0,\,t_1\in T_1$, the condition $\ttau(t)=t$
means that $t_0t_1^{-1}=t_0t_1$, that is, $t_1^2=1$.  It follows that $T^\ttau=T_0\cdot(T_1)_2$.
Thus the kernel of the action \eqref{e:Ntau-action}  is contained in  $T^\ttau=T_0\cdot(T_1)_2$.
The inverse inclusion is clear.
\end{proof}

\begin{notation}
We define
$\Wohat=\Nt/T^\ttau.$
\end{notation}
By Lemma \ref{l:Ntau-action}, the action \eqref{e:Ntau-action}
induces an effective action of $\Wohat$ on $T_0\cdot\htau$.
The inclusion map $\Nt\into N_0$ induces a homomorphism $\Nt\to W_0$
(which is surjective by Lemma \ref{l:N-M}) with kernel $\Nt\cap T$.
Since $T^\ttau\subseteq \Nt\cap T$, we obtain an induced homomorphism
\[\Wohat\to W_0\]
with kernel $(\Nt\cap T)/T^\ttau$.
We have
\begin{align*}
\Nt\cap T&=\{t=t_0t_1\in T\mid t_0\in T_0,\ t_1^2\in (T_0)_2\},\\
T^\ttau&=\{t=t_0t_1\in T\mid t_0\in T_0,\ t_1^2=1\}.
\end{align*}
The homomorphism
\[ \Nt\cap T\to T_1, \quad t=t_0t_1\mapsto t\cdot\ttau(t)^{-1}=t_1^2\]
with kernel $T^\ttau$ and with image $T_0\cap T_1$ yields
an isomorphism
\[ (\Nt\cap T)/T^\ttau\isoto T_0\cap T_1\hs.\]
The inverse isomorphism is given as follows:
\begin{equation}\label{e:sqrt}
T_0\cap T_1\isoto(\Nt\cap T)/T^\ttau,\quad s\mapsto
\sqrt{s}\cdot T^\ttau\quad\text{ for }s\in T_0\cap T_1\hs,
\end{equation}
where $\sqrt{s}$ is any element $t_1\in T_1$ such that $t_1^2=s$.
Thus we obtain a short exact sequence
\begin{equation}\label{e:exact-What}
1\to T_0\cap T_1\to \Wohat\to W_0\to 1,
\end{equation}
where the homomorphism $T_0\cap T_1\to\Wohat$ is given by \eqref{e:sqrt}.

We construct a canonical splitting of \eqref{e:exact-What}.
Consider the subgroup
\[ N_0^\ttau=\{n\in N_0\mid \ttau(n)=n\}\subseteq\Nt.\]
The inclusion map $N_0^\ttau\into N_0$ induces a homomorphism
\begin{equation}\label{e:N-0-tau}
N_0^\ttau\to W_0\hs,
\end{equation}
with kernel  $N_0^\ttau\cap T=T^\ttau$.
\end{subsec}

\begin{lemma}\label{l:N-0-tau}
The homomorphism \eqref{e:N-0-tau} is surjective.
\end{lemma}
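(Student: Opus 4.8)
The plan is to show that any Weyl group element $w \in W_0$ can be represented by a $\ttau$-fixed element of $N_0$. First I would recall from Proposition~\ref{p:res-rt}\eqref{i:res-Weyl} that $W_0 \simeq W(\Rtbar)$, so it suffices to handle the generators of $W(\Rtbar)$, namely the reflections $s_{\bar\alpha}$ in the restricted simple roots $\bar\alpha$ for $\alpha \in \Sm$. The key observation, already extracted in the proof of Proposition~\ref{p:res-rt}, is that each restricted simple root $\bar\alpha$ is a root of $\gl^\ttau$ with respect to $T_0$, whose root subspace is spanned by $\e\alpha + \e{\tau(\alpha)} = \e\alpha + \ttau(\e\alpha) \in \gl^\ttau$ (using that $\ttau$ respects the pinning). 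Consequently, the $\mathfrak{sl}_2$-triple associated with this restricted root lies entirely inside $\gl^\ttau = \Lie G^\ttau$, so the reflection $s_{\bar\alpha}$ can be represented by an element $n \in \Nm_{G^\ttau}(T_0) = N_0^\ttau$. Since such reflections generate $W(\Rtbar) \simeq W_0$, and since $N_0^\ttau$ is a group, products of these representatives still lie in $N_0^\ttau$; hence \eqref{e:N-0-tau} hits all of $W_0$.

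More precisely, I would argue as follows. Fix $\alpha \in \Sm$ and consider the element $e = \e\alpha + \ttau(\e\alpha) \in \gl^\ttau_{\bar\alpha}$, together with a corresponding lowering element $f$ in the opposite restricted root space, also chosen inside $\gl^\ttau$ (for instance using the pinning on the opposite Borel, or simply the image of $e$ under a compact conjugation commuting with $\ttau$, which exists by Remark~\ref{r:tau&comp}). These span, together with $[e,f] \in \tl_0$, an $\mathfrak{sl}_2$ inside $\gl^\ttau$, and exponentiating the standard Weyl element $\exp(e)\exp(-f)\exp(e)$ of this $\mathfrak{sl}_2$ inside $G^\ttau$ produces an element $n_\alpha \in G^\ttau$ normalizing $T_0$ and inducing $s_{\bar\alpha}$ on $X_0$. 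Thus $n_\alpha \in N_0^\ttau$ maps to $s_{\bar\alpha} \in W_0$ under \eqref{e:N-0-tau}.

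The remaining point is that the $s_{\bar\alpha}$, $\alpha \in \Sm$, generate $W_0$: this is exactly Proposition~\ref{p:res-rt}\eqref{i:res-Weyl} combined with \eqref{i:res-sm}, since $\Smbar$ is a basis of the root system $\Rtbar$ and the simple reflections relative to a basis generate the Weyl group. Hence every $w \in W_0$ is a product $s_{\bar\alpha_1} \cdots s_{\bar\alpha_r}$, and $n_{\alpha_1} \cdots n_{\alpha_r} \in N_0^\ttau$ represents $w$, proving surjectivity of \eqref{e:N-0-tau}.

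The main obstacle I anticipate is the careful choice of the lowering vector $f$ and the verification that the resulting $\mathfrak{sl}_2$-triple genuinely lies in $\gl^\ttau$ and that exponentiating its Weyl element stays in $G^\ttau$ (rather than merely in $G$), so that $n_\alpha$ is honestly $\ttau$-fixed; this is where one must be slightly careful about which generator of $\gl^{-\ttau}_{\bar\alpha}$ versus $\gl^\ttau_{\bar\alpha}$ one picks, since the restricted root $\bar\alpha$ is (for $\alpha$ non-$\tau$-invariant) simultaneously a root of $\gl^\ttau$ and a weight of $\gl^{-\ttau}$ with eigenvectors $\e\alpha \pm \e{\tau(\alpha)}$. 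Using the plus-combination throughout, and invoking compactness of $\GG$ to obtain a $\ttau$-stable compact form in which the relevant $\mathfrak{su}_2$ sits, removes this difficulty.
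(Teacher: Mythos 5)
Your proposal is correct and follows essentially the same route as the paper: both reduce to the fact (from Proposition~\ref{p:res-rt}\eqref{i:res-Weyl}) that $W_0=W(\Rtbar)$ is generated by reflections in the restricted simple roots, which are simple roots of $G^\ttau$ with respect to $T_0$, so each such reflection is represented in $\Nm_{G^\ttau}(T_0)=N_0^\ttau$. The paper simply leaves the $\mathfrak{sl}_2$-triple construction implicit, whereas you spell it out.
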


\begin{proof}
By Proposition~\ref{p:res-rt}\eqref{i:res-Weyl}, the group $W_0$ regarded as a group of automorphisms of $X_0$
is spanned by reflections along the restricted simple roots $\bar\alpha\in\Smbar$,
which also comprise a set of simple roots for $G^\ttau$ with respect to $T_0$.
Hence each element $w\in W_0$ can be represented by some $n\in\Nm_{G^{\ttau}}(T_0)=N_0^{\ttau}$.
\end{proof}

\begin{subsec}
By Lemma \ref{l:N-0-tau}, the injective homomorphism  $N_0^\ttau/T^\ttau\to W_0$
is surjective, and hence is an isomorphism.
We obtain a canonical splitting
\[ W_0\isoto  N_0^\ttau/T^\ttau\into \Nt/T^\ttau=\widehat{W}_0\hs,\]
which defines a canonical isomorphism
\[(T_0\cap T_1)\rtimes W_0\,\isoto\, \Wohat.\]

We describe the action of $\Wohat=(T_0\cap T_1)\rtimes W_0$ on $T_0\cdot\htau$.
An element $s\in T_0\cap T_1$ acts by
\begin{equation*}
s\colon\ t\cdot\htau\ \mapsto \sqrt{s}\hs (t\cdot\htau)\hs (\sqrt{s})^{-1}=st\cdot\htau
\quad\text{for } t\in T_0.
\end{equation*}
The group $W_0$ acts by
\[w\colon\ t\cdot\htau\mapsto n\hs(t\cdot\htau)\hs n^{-1}=n\hs t\hs n^{-1}\cdot\htau=w(t)\cdot\htau
     \quad\text{for }w\in W_0,\ t\in T_0\,,\]
where $n\in N_0^\ttau$ is a representative of $w$.
\end{subsec}

\begin{subsec}\label{e:e-e0}
By Lemma~\ref{l:e0-T0-cap-T1} the scaled exponential mapping $\Exp$ restricts to a surjective homomorphism
\begin{equation}\label{e:e0-wt-X}
\Exp_0\colon\, \ii\widetilde{X}_0^{\vee}\,\to\, T_0\cap T_1\hs.
\end{equation}
We construct a map
\begin{equation}\label{e:lambda-vs}
\Exp_W\colon\,\Wotil\to\Wohat,\quad \nu\cdot w\mapsto \Exp(\ii \nu)\cdot w\quad
\text{for }\nu\in \widetilde{X}_0^{\vee}\hs,\, w\in W_0\hs,
\end{equation}
Since the homomorphism \eqref{e:e0-wt-X} is $W_0$-equivariant, the map \eqref{e:lambda-vs} is a homomorphism.
It is clear that $\Exp_W$ is surjective and its kernel is $X_0^\vee$.
\end{subsec}

\begin{lemma}\label{l:Wtil-What}
The map
\begin{equation}\label{e:ttt0-e0}
 \Exphat\colon\ttl_0(\R)\to \TT_0(\R)\cdot\htau,\quad y\mapsto \exp(2\pi y)\cdot\htau \quad\text{for }y\in\ttl_0(\R)
 \end{equation}
is compatible with the homomorphism $\Exp_W\colon\Wotil\to\Wohat$ and thus
induces a bijection of the  orbit sets
\begin{equation}\label{e:ttt0-Wtil}
\Exp_\orb\colon \ttl_0(\R)/\Wotil\isoto (\TT_0(\R)\cdot\htau)/\Wohat\hs.
\end{equation}
\end{lemma}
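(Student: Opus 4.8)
The plan is to verify directly that $\Exphat$ intertwines the affine action of $\Wotil$ on $\ttl_0(\R)$ with the conjugation action of $\Wohat$ on $\TT_0(\R)\cdot\htau$ through the surjective homomorphism $\Exp_W$, and then to deduce that the induced map $\Exp_\orb$ on orbit sets is a bijection by a standard covering-space argument, using that $\ker\Exp_W=X_0^\vee$ and $\ker\Exp_0=\ii X_0^\vee$ both sit inside the translation lattice $\wt X_0^\vee$ of $\Wotil$.

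\emph{Equivariance.} I would write a general element of $\Wotil=\wt X_0^\vee\rtimes W_0$ as $\nu\cdot w$, so that $(\nu\cdot w)\cdot y=w(y)+\ii\nu$ for $y\in\ttl_0(\R)$. Since $\Exp_0$ is a homomorphism of abelian groups and is $W_0$-equivariant,
\[\Exphat\bigl((\nu\cdot w)\cdot y\bigr)=\Exp_0\bigl(w(y)+\ii\nu\bigr)\cdot\htau=\Exp(\ii\nu)\cdot w\bigl(\Exp_0(y)\bigr)\cdot\htau.\]
On the other hand, $\Exp_W(\nu\cdot w)=\Exp(\ii\nu)\cdot w$, where $\Exp(\ii\nu)\in T_0\cap T_1$ by Lemma~\ref{l:e0-T0-cap-T1} because $\nu\in\wt X_0^\vee$; applying this element of $\Wohat$ to $\Exphat(y)=\Exp_0(y)\cdot\htau$ by the recipe for the $\Wohat$-action (first the Weyl element $w$, then left multiplication by the torus element) yields the very same element $\Exp(\ii\nu)\cdot w(\Exp_0(y))\cdot\htau$. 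Hence $\Exphat(g\cdot y)=\Exp_W(g)\cdot\Exphat(y)$ for all $g\in\Wotil$ and all $y$, so $\Exphat$ descends to a well-defined map $\Exp_\orb\colon\ttl_0(\R)/\Wotil\to(\TT_0(\R)\cdot\htau)/\Wohat$.

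\emph{Bijectivity.} Surjectivity of $\Exp_\orb$ is immediate: $\Exp_0\colon\ttl_0(\R)\to\TT_0(\R)$ is surjective (it is a universal covering of a connected group), hence so is $\Exphat\colon\ttl_0(\R)\to\TT_0(\R)\cdot\htau$, $y\mapsto\Exp_0(y)\cdot\htau$. For injectivity, suppose $\Exphat(y_1)$ and $\Exphat(y_2)$ lie in a single $\Wohat$-orbit; choosing under the surjection $\Exp_W$ a $\Wotil$-preimage of the element of $\Wohat$ relating them and using the equivariance just proved, I may replace $y_1$ by a point in its $\Wotil$-orbit and thereby reduce to the case $\Exphat(y_1)=\Exphat(y_2)$. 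Comparing the $G$-components in $\Ghat=G\rtimes\langle\htau\rangle$, this gives $\Exp_0(y_1)=\Exp_0(y_2)$, so $y_1-y_2\in\ker\Exp_0=\ii X_0^\vee$; writing $y_1-y_2=\ii\nu$ with $\nu\in X_0^\vee\subseteq\wt X_0^\vee$, the translation $\nu\in\Wotil$ carries $y_2$ to $y_1$, so $y_1$ and $y_2$ lie in one $\Wotil$-orbit. Hence $\Exp_\orb$ is a bijection. The whole argument is routine; the only point to watch is the semidirect-product bookkeeping, in particular that the torus element $\Exp(\ii\nu)\in T_0\cap T_1$ acts on $\TT_0(\R)\cdot\htau$ by plain left multiplication (so the choice of square root $\sqrt{\Exp(\ii\nu)}\in T_1$ used to conjugate $t\cdot\htau$ drops out) and that $\ker\Exp_0$ is contained in the translation lattice $\wt X_0^\vee$ of $\Wotil$. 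I do not anticipate any genuine obstacle.
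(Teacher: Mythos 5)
Your proof is correct and follows essentially the same route as the paper's: the paper likewise dismisses the equivariance as routine, gets surjectivity from surjectivity of $\Exphat$, and proves injectivity by lifting the relating element of $\Wohat$ through the surjection $\Exp_W$ and then absorbing the remaining discrepancy in $\ker\Exp_0=\ii X_0^\vee\subseteq\ii\wt X_0^\vee$. Your explicit check of the semidirect-product bookkeeping (in particular that $\Exp(\ii\nu)\in T_0\cap T_1$ acts by plain left multiplication) is exactly the "routine" verification the paper omits.
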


\begin{proof}
It is routine to check that $\Exphat$ is $\Exp_W$-equivariant, that is,
\[\Exphat(\wtil\cdot y)=\Exp_W(\wtil)\cdot \Exphat(y)\quad \text{for }\wtil\in\Wotil,\ y\in \ttl_0(\R).\]
Hence the map on orbits
$\Exp_\orb$ of \eqref{e:ttt0-Wtil}  is well defined.
Since the map $\Exphat$ is surjective, the map $\Exp_\orb$ is surjective as well.

We show that the map $\Exp_\orb$ is injective.
Let $\Wotil\cdot y_1$ and $\Wotil\cdot y_2$ be two orbits in $\ttl_0(\R)$
with the same image in $(T_0(\R)\cdot\htau)/\Wohat$.
This means that
\[\Exphat(y_2)=\what\cdot \Exphat(y_1)\quad\text{for some }\what\in\Wohat.\]
Since the homomorphism $\Exp_W$ is surjective, there exists $\wtil'\in\Wotil$ such that $\what=\Exp_W(\wtil')$.
Set $y_2'=\wtil'\cdot y_1$; then $\Exphat(y_2')=\Exphat(y_2)$.
It follows that
\[y_2=\ii \nu +y_2'\quad\text{for some }\nu\in X_0^\vee\hs.\]
Set $\wtil=\nu\cdot \wtil'\in \Wotil\hs.$
Then $y_2=\wtil\cdot y_1$.
Thus $\Wotil\cdot y_1=\Wotil\cdot y_2$.
We have proved that the map $\Exp_\orb$ of \eqref{e:ttt0-Wtil}
is injective, and hence bijective, as required.
\end{proof}

\begin{subsec}
Suppose now that $\GG$ is semisimple. Recall that the group $\Wotil=\widetilde{X}_0^{\vee}\rtimes W_0$
contains a subgroup of finite index $\Wotil^\ssc=\widetilde{Q}_0 ^{\vee}\rtimes W_0$\hs.
We have a finite group
 \[{F}_0=\widetilde{X}_0^\vee/\widetilde{Q}_0^\vee\simeq\Wotil/\Wotil^\ssc\]
and  a canonical bijection of the orbit sets
\[(\ttl_0(\R)/\Wotil^\ssc)\hs/{F}_0\hs\isoto\hs\ttl_0(\R)/\Wotil\hs,\]
and thus a chain of bijections
\begin{equation*}
 (\ttl_0(\R)/\Wotil^\ssc)\hs/{F}_0\hs\isoto\hs\ttl_0(\R)/\Wotil\isoto (\TT_0(\R)\cdot\htau)/\hs\Wohat\hs.
\end{equation*}
In order to compute $(\TT_0(\R)\cdot\htau)/\hs\Wohat$,
we wish to compute $(\ttl_0(\R)/\Wotil^\ssc)\hs/{F}_0$.
Note that the group
\[C_0= \wt P_0^\vee/\wt Q_0^\vee\simeq  \Wotil^\ad/\Wotil^\ssc   \]
naturally acts on the orbit set  $\ttl_0(\R)/\Wotil^\ssc$,
and the group ${F}_0$ acts on $\ttl_0(\R)/\Wotil^\ssc$ via the embedding
${F}_0\into C_0\hs$.
So we wish to describe the set $\ttl_0(\R)/\Wotil^\ssc$ together with the action of $C_0$,
where we may and shall assume that $\GG$ is simply connected.
\end{subsec}

\begin{subsec}
A pair $(\GG,\theta)$ with simply connected $\GG$ naturally decomposes into a direct product
of {\em indecomposable} pairs.
It suffices to describe the set $\ttl_0(\R)/\Wotil^\ssc$ together with the action of $C_0$
for any indecomposable pair $(\GG,\theta)$.
There are three cases:
\begin{enumerate}
\renewcommand{\theenumi}{\alph{enumi}}
  \item\label{i:inner-0} $\GG$ is simple and $\theta$ is an {\em inner} involution of $\GG$;
  \item\label{i:outer-0} $\GG$ is simple and $\theta$ is an {\em outer} involution of $\GG$;
  \item\label{i:swap-0}  $\GG^=\GG^{\prime}\times_\R \GG^{\prime\prime}$, where
  $\theta$ swaps the isomorphic simple factors $\GG^{\prime}$ and $\GG^{\prime\prime}$.
\end{enumerate}
In the next three sections, for an  indecomposable pair $(\GG,\theta)$ of each of the types
\eqref{i:inner-0}, \eqref{i:outer-0}, and \eqref{i:swap-0} above,
we shall describe the orbit set $\ttl_0(\R)/\Wotil^\ssc $ and the action
of $C_0$ on it.
\end{subsec}

\section{Case of an inner form of a simple compact group}
\label{s:inner}

\begin{subsec}\label{ss:ext-inner}
In this section $\GG$ is a simply connected, simple, compact  $\R$-group,
and the involution $\theta$ of $\GG$ is {\em inner}, that is, $\tau=\id$.
Then $\TT_0=\TT$, $\ttl_0(\R)=\ttl(\R)$, $\Wotil^\ssc=\Wtil^\ssc$, $C_0=C$, etc.;
see Section~\ref{s:lattices} for notation.

Let $\Dtil=\Dtil(G,T,B)=\Dtil(\Rt,\Sm)$  denote the extended Dynkin diagram;
the set of vertices of $\Dtil$ is $\Smtil=\{\alpha_0,\alpha_1,\dots,\alpha_\ell\}$,
where $\alpha_1,\dots,\alpha_\ell$ are the simple roots
enumerated as in \cite[Table~1]{OV}, and $\alpha_0$ is the lowest root.
These roots $\alpha_0,\alpha_1,\dots,\alpha_\ell$ are linearly dependent, namely,
\begin{equation}\label{e:sum-mj}
m_0\alpha_0+m_1\alpha_1+\dots+m_{\ell}\hs\alpha_\ell=0,
\end{equation}
where the coefficients $m_i$ are positive integers for all $i=0,1,\dots,\ell$, and
$m_0=1$.
These coefficients $m_i$ are tabulated in \cite[Table 6]{OV} and in \cite[Table 3]{OV2};
see also Table \ref{t:tab-nt} below.
In the diagrams on the left,
we write only the coefficients $m_i$ that are $\le2$.
In the diagrams on the right, we write the coefficients $c_i$ (modulo $\Z$) of the decomposition
into a linear combination of simple roots $\alpha_i$
for a representative ($\omega_1\hs,\omega_\ell\hs,$ etc.) of a generator of the group $P/Q$.
We write only the coefficients $c_i$ for the simple roots $\alpha_i$ for which $m_i\le 2$.
The vertex corresponding to $\alpha_0$ is painted in black.
\end{subsec}

\begin{subsec}\label{ss:barycentric-inner}
Following \cite[Section 3.3.6]{OV2}, we introduce the {\em barycentric coordinates}
$x_0$, $x_1$, \dots , $x_{\ell}$ of a point  $x\in \ttl(\R)$ by setting
\begin{equation}\label{e:barycentric}
\alpha_i(x)=\ii x_i\quad\text{for }i=1,\dots,\ell,\quad \alpha_0(x)=\ii(x_0-1).
\end{equation}

It follows from \eqref{e:sum-mj} that
\begin{equation*}\label{e:app-sum-2}
m_0x_0+m_1x_1+\dots+m_{\ell}x_{\ell}=1.
\end{equation*}
By Bourbaki \cite[Section VI.2.2, Proposition 5(i)]{Bourbaki},
see also \cite[Section 3.3.6, Proposition 3.10(2)]{OV2},
the closed simplex $\Delta\subset\ttl(\R)$ given by the inequalities
\begin{equation*}
x_0\ge 0,\ x_1\ge 0,\ \dots,\ x_\ell\ge 0
\end{equation*}


\tikzset{/Dynkin diagram,mark=o,affine mark=*,
edge length=0.75cm,arrow shape/.style={-{angle 90}}}

\begin{landscape}
\begin{longtable}[c]{l@{\qquad\qquad}c@{\qquad\quad}l@{\qquad}c}
\caption{Coefficients $m_i$ and $c_i$ on extended Dynkin diagrams}
\label{t:tab-nt}
\\[7ex]
\endfirsthead
\caption{(continued)}
\\[7ex]
\endhead
\endfoot
$\AAA_1$
&\dynkin[edge length=0.75cm,
labels*={1,1}
]%
A[1]{1}
&$\omega_1$:&
\dynkin[edge length=0.75cm,
labels*={,\frac{1}{2}}
]%
A[1]{1}
\\ \\
$\AAA_\ell$\ ($\ell\ge2$)
&\dynkin[edge length=0.75cm,
labels={,1,1,1,1},
labels*={1}
]%
A[1]{}
&$\omega_1$:&
\dynkin[edge length=0.75cm,
labels={,\frac{\ell}{\ell+1},\frac{\ell-1}{\ell+1},\frac{2}{\ell+1},\frac{1}{\ell+1}},
labels*={}
]%
A[1]{}
\\ \\
$\BBB_\ell$\ ($\ell\ge3$)
&\dynkin[%
edge length=0.75cm,
arrow shape/.style={-{angle 90}},
labels*={,,2,2,2,2,2},
labels={1,1}
]
B[1]{}
&$\omega_\ell$:&
\dynkin[%
edge length=0.75cm,
arrow shape/.style={-{angle 90}},
labels*={,,\frac{2}{2},\frac{3}{2},\frac{\ell-2}{2},\frac{\ell-1}{2},\frac{\ell}{2}},
labels={,\frac{1}{2}}
]
B[1]{}
\\ \\
$\CCC_\ell$\ ($\ell\ge2$)
&\dynkin[%
edge length=0.75cm,
arrow shape/.style={-{angle 90}},
labels*={1,2,2,2,2,1}%
]
C[1]{}
&$\omega_1$:&
\dynkin[edge length=0.75cm,
arrow shape/.style={-{angle 90}},
labels*={,0,0,0,0,\frac{1}{2}}]%
C[1]{}
\\ \\
$\DDD_\ell$\ ($\ell\ge4$)
&\dynkin[%
edge length=0.75cm,
arrow shape/.style={-{angle 90}},
labels={1,1,,,,,1,1},
labels*={,,2,2,2,2,,}
]
D[1]{}
&\multicolumn{2}{@{}l@{}}{$\begin{array}{@{}l@{\qquad}c}
\omega_\ell:&
\dynkin[edge length=0.75cm,
arrow shape/.style={-{angle 90}},
labels={,\frac{1}2,,,,\frac{\ell-2}2,\frac{\ell-2}4,\frac{\ell}4},
labels*={,,\frac{2}2,\frac{3}2,\frac{\ell-3}2,,,},
label directions={,,,,,right,,}
]
D[1]{}
\\ \\
\omega_1:&
\dynkin[edge length=0.75cm,
arrow shape/.style={-{angle 90}},
labels={,0,,,, ,\frac12,\frac12},
labels*={,,0,0,0,0,,}
]
D[1]{}
\end{array}$}
\\ \\
$\EEE_6$
&\dynkin[%
upside down,
edge length=0.75cm,labels={1,1,2,2,,2,1}%
]
E[1]{6}
& $\omega_1$:&
\dynkin[edge length=0.75cm,
upside down,
labels={,\frac{1}{3},0,\frac{2}{3}, ,\frac{1}{3},\frac{2}{3}}]%
E[1]{6}
\\ \\
$\EEE_7$
&\dynkin[%
edge length=0.75cm,
backwards,
upside down,
labels={1,2,2,,,,2,1},
label directions={,,left,,,,,}%
]
E[1]{7}
& $\omega_7$:&
\dynkin[edge length=0.75cm,
backwards,
upside down,
labels={,0,\frac{1}{2},,,,0,\frac{1}{2}},
label directions={,,left,,,,,}]%
E[1]{7}
\\ \\
$\EEE_8$
&\dynkin[%
edge length=0.75cm,
backwards,
upside down,
labels={1,2,,,,,,,2}%
]
E[1]{8} & &
\\ \\
$\FFF_4$
&\dynkin[%
edge length=0.75cm,
arrow shape/.style={-{angle 90}},
backwards,
labels*={1,2,,,2}%
]
F[1]{4} & &
\\ \\
$\GGG_2$
&\dynkin[%
edge length=0.75cm,
arrow shape/.style={-{angle 90}},
backwards,
labels*={1,2,}
]
G[1]{2} & &
\end{longtable}
\end{landscape}

\noindent
is a fundamental domain for the affine Weyl group $\Wtil^\ssc$ acting on $\ttl(\R)$.
\end{subsec}


\begin{subsec}\label{ss:C-action}
The action of $C=P^\vee/Q^\vee\simeq\Wtil^{\ad}/\Wtil^\ssc$ on $\Delta\simeq\ttl(\R)/\Wtil^\ssc$
is given by permutations of barycentric coordinates corresponding to a subgroup
of the automorphism group of the extended Dynkin diagram
acting simply transitively on the set of vertices $\alpha_i$ with $m_i=1$.
This action is described in \cite[Section VI.2.3, Proposition 6]{Bourbaki}.
Namely, the nonzero cosets of $C$ are represented
by the fundamental coweights $\omega_i^{\vee}$ such that $\ii\omega_i^{\vee}\in\Delta$, that is, $m_i=1$.
Let $w_0$, resp.\ $w_i$, denote the longest element in $W$, resp.\
in the Weyl group $W_i$ of the root subsystem $\Rt_i$ generated by $\Sm\smallsetminus\{\alpha_i\}$.
Then the transformation $x\mapsto w_i\hs w_0\hs x+\ii\omega_i^{\vee}$
preserves $\Delta$ whenever $m_i=1$ and gives the action of the respective coset
$[\omega_i^\vee]\in C$ on $\Delta$.

We describe the action  of $C$ on $\Dtil$ explicitly case by case,
using \cite[Plates I-IX, assertion (XII)]{Bourbaki}.
If $G$ is of one of the types $\EEE_8$, $\FFF_4$, $\GGG_2$, then $C=0$.
If $G$ is of one of the types $\AAA_1$, $\BBB_\ell$ ($\ell\ge 3$), $\CCC_\ell$ ($\ell\ge 2$), $\EEE_7$,
then $C\simeq\Z/2\Z$,
and the nontrivial element of $C$ acts on $\Dtil$ by the only nontrivial automorphism of $\Dtil$.

It remains to consider the cases $\AAA_\ell$ ($\ell\ge 2$), $\DDD_\ell$, and $\EEE_6$.
The action of
$C$ on $\Dtil$ is uniquely determined by restriction to
the set of vertices $\alpha_j$ of $\Dtil$ with $m_j=1$.
These are the images of $\alpha_0$ under the automorphism group of $\Dtil$.

Let $D$ be of type $\AAA_\ell$, $\ell\ge2$.
The generator $[\omega_1^\vee]$ of $C\simeq\Z/(\ell+1)\Z$ acts on $\Dtil$ as the
cyclic permutation $0\mapsto 1\mapsto\dots\mapsto\ell-1\mapsto\ell\mapsto 0$:

\tikzset{/Dynkin diagram,mark=o,affine mark=*,
edge length=0.75cm,arrow shape/.style={-{angle 90}}}

\[
\dynkin[%
edge length=0.8cm,
edge/.style={-{stealth[sep=2pt]}},
labels={,1,2,\ell-1,\ell},
labels*={0}]
A[1]{}
\]
\medskip

Let $D$ be of type $\DDD_\ell$, $\ell\ge 4$ is even.
We have $C\simeq\Z/2\Z\oplus\Z/2\Z$,
and  the classes $[\omega_1^\vee]$ and $[\omega_{\ell-1}^\vee]$ are generators of $C$.
These generators act on $\Dtil$ as follows: $[\omega_1^\vee]$ acts as $0\leftrightarrow1$, $\ell-1\leftrightarrow\ell$,
and $[\omega_{\ell-1}^\vee]$ acts as $0\leftrightarrow\ell-1$, $1\leftrightarrow\ell$:
\[
[\omega_1^\vee]\colon
\dynkin[%
edge length=0.75cm,
labels={0,1,,,,,\ell-1,\ell},
labels*={,,2,3,,\ell-2,,},
involution/.style={latex-latex,densely dashed},
involutions={01;*67}]
D[1]{}
\qquad\quad
[\omega_{\ell-1}^\vee]\colon
\dynkin[%
edge length=0.75cm,
labels={0,1,,,,,\ell-1,\ell},
labels*={,,2,3,,\ell-2,,},
involution/.style={latex-latex,densely dashed},
involutions={*06;17}]
D[1]{}
\]
\medskip

Let $D$ be of type $\DDD_\ell$, $\ell\ge 5$ is odd.
We have $C\simeq\Z/4\Z$, and the element $[\omega_{\ell-1}^\vee]$
is a generator of $C$.
This generator acts on $\Dtil$ as  the 4-cycle $0\mapsto\ell-1\mapsto1\mapsto\ell\mapsto0$:

\[
 \begin{dynkinDiagram}[edge length=0.75cm,
labels={0,1,2,,,\ell-2,\ell-1,\ell},
labels*={,,,,,,,},
label directions={,,left,,,right,,}]
{D}[1]{}
\draw[-latex,densely dashed] (root 0) to (root 6);
\draw[-latex,densely dashed] (root 1) to (root 7);
\draw[-latex,densely dashed] (root 7) to (root 0);
\draw[-latex,densely dashed] (root 6) to (root 1);
\end{dynkinDiagram}
\]

\bigskip

Let $D$ be of type $\EEE_6$.
The generator $[\omega_1^{\vee}]$ of $C\simeq\Z/3\Z$  acts as the 3-cycle $0\mapsto1\mapsto5\mapsto0$:

\[
\dynkin[%
edge length=0.75cm,
upside down,
labels={,1,6,2,3,4,5},%
labels*={0},%
involution/.style={-latex,densely dashed},
involutions={[in=-135,out=-45,relative]16;60;01}]
E[1]{6}
\]

\bigskip

We state results of this section as a proposition.
\end{subsec}

\begin{proposition}\label{p:Delta-ttt-inner}
With the assumptions and notation of this section,
 the inclusion map $\Delta\into\ttl(\R)$
 induces a $C$-equivariant bijective correspondence
 between  $\Delta$ and the set of orbits of\/ $\Wtil^\ssc$ in $\ttl(\R)$,
 where $C$ acts on
 $\ttl(\R)/\Wtil^\ssc$ via the isomorphism $C\simeq \Wtil^\ad/\Wtil^\ssc$,
 and  on $\Delta$ by permutations of barycentric coordinates as described above.
 \end{proposition}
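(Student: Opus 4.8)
The plan is to combine three classical ingredients. First, the fact (Bourbaki \cite[Section VI.2.2, Proposition 5(i)]{Bourbaki}, quoted in \ref{ss:barycentric-inner}) that the closed alcove $\Delta\subset\ttl(\R)$ defined by $x_0\ge0,\dots,x_\ell\ge0$ in barycentric coordinates is a \emph{fundamental domain} for the action of the affine Weyl group $\Wtil^\ssc$ on $\ttl(\R)$: every $\Wtil^\ssc$-orbit meets $\Delta$ in exactly one point. This already gives the bijection of sets $\Delta\isoto\ttl(\R)/\Wtil^\ssc$ induced by the inclusion, without reference to $C$. Second, the fact that $\Wtil^\ssc$ is normal in $\Wtil^\ad$ with quotient $\Wtil^\ad/\Wtil^\ssc\simeq P^\vee/Q^\vee=C$ (from \ref{ss:aff-W}), so that the residual action of $C$ on the orbit set $\ttl(\R)/\Wtil^\ssc$ is well defined; transporting it through the above bijection yields an action of $C$ on $\Delta$, which is what must be identified explicitly. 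Third, the description of that action via Bourbaki \cite[Section VI.2.3, Proposition 6]{Bourbaki} and the case-by-case analysis already carried out in \ref{ss:C-action}.

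Concretely I would argue as follows. Let $c\in C$ and pick a lift $\wtil\in\Wtil^\ad$, say $\wtil=\nu\cdot w$ with $\nu\in P^\vee$ representing $c$ and $w\in W$. For $x\in\Delta$ the point $\wtil\cdot x$ lies in some $\Wtil^\ssc$-orbit, hence is $\Wtil^\ssc$-equivalent to a unique point of $\Delta$; this defines $c\cdot x\in\Delta$, and one checks (using normality of $\Wtil^\ssc$) that it is independent of the choice of lift $\wtil$ and that $c\mapsto(x\mapsto c\cdot x)$ is a group action. To make it explicit one invokes the standard fact that for each vertex $\alpha_i$ of $\Dtil$ with $m_i=1$ the affine transformation $x\mapsto w_iw_0\,x+\ii\omega_i^\vee$ maps $\Delta$ onto itself (it sends the vertex $\ii\omega_i^\vee$ of $\Delta$ to the vertex $0$, i.e. $\alpha_0$, and permutes the remaining vertices accordingly), hence induces on $\Delta$ a permutation of the barycentric coordinates realizing an automorphism of $\Dtil$; this is precisely the content recalled in \ref{ss:C-action}, and since the nonzero cosets of $C$ are exactly the $[\omega_i^\vee]$ with $m_i=1$, the action of all of $C$ on $\Delta$ is by the subgroup of $\Aut\Dtil$ acting simply transitively on $\{\alpha_i : m_i=1\}$, case-by-case as in the diagrams above. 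The $C$-equivariance of the bijection $\Delta\isoto\ttl(\R)/\Wtil^\ssc$ is then immediate from the way the $C$-action on $\Delta$ was defined.

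The genuinely substantive input is the two Bourbaki results — that $\Delta$ is a fundamental domain and that $\Wtil^\ad=\Wtil^\ssc\rtimes(\text{stabilizer of }\Delta)$ with the stabilizer mapping isomorphically to the relevant subgroup of $\Aut\Dtil$ — together with the explicit case analysis already performed in \ref{ss:C-action}; given those, the proposition is essentially a repackaging. The only point requiring a little care, and the place I would expect to spend the most words, is the verification that the $C$-action on $\Delta$ obtained by "apply a lift in $\Wtil^\ad$, then fold back into $\Delta$ by $\Wtil^\ssc$" coincides with the concrete permutation-of-coordinates action described via the $\omega_i^\vee$; this is exactly Bourbaki's Proposition~6 of \cite[Section VI.2.3]{Bourbaki}, so the proof reduces to citing it and noting that our $C$ is the group denoted there and our $\Delta$ is the alcove there. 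No serious obstacle remains beyond correctly matching conventions (the enumeration of simple roots, the choice of lowest root $\alpha_0$, and the sign/scaling in \eqref{e:barycentric}) with those of the cited sources.
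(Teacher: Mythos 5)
Your proposal is correct and follows essentially the same route as the paper: the proposition is, as you say, a repackaging of the two Bourbaki facts (the closed alcove $\Delta$ is a strict fundamental domain for $\Wtil^\ssc$, and $\Wtil^\ad/\Wtil^\ssc\simeq C$ acts on $\Delta$ via the transformations $x\mapsto w_i w_0\hs x+\ii\omega_i^\vee$, i.e.\ by diagram automorphisms permuting the barycentric coordinates) together with the case-by-case identification already recorded in \ref{ss:C-action}. The paper gives no separate proof beyond these citations, so your write-up matches it; only your parenthetical about which vertex is sent to which under $x\mapsto w_iw_0\hs x+\ii\omega_i^\vee$ has the direction reversed (it sends the vertex $0$, corresponding to $\alpha_0$, to $\ii\omega_i^\vee$), which does not affect the argument.
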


\section{Case of an outer form of a simple compact group}
\label{s:outer}

\begin{subsec}
In this section, $\GG$ is a simply connected, simple, compact $\R$-group,
and $\tau\in\Aut\BRD(G)$ {\em is of order} 2.
Then $G$ is of one of the types $\AAA_m$ ($m\ge 2)$,
$\DDD_m$ ($m\ge3)$, or $\EEE_6$\hs.
We describe the quotient set $\ttl_0(\R)/\Wotil^\ssc$, the group $C_0=\widetilde{P}_0^\vee/\widetilde{Q}_0^\vee$
and the action of $C_0$ on $\ttl_0(\R)/\Wotil^\ssc$\hs.
We use the notation from Section~\ref{s:lattices}.

The restricted root systems $\Rtbar$ are tabulated
in Gorbatsevich, Onishchik and Vinberg \cite[Section 3.3.9, p.\,119]{OV2}.
If $\Rt$ is of type $\AAA_{2l}$  $(l\ge 1)$, then $\Rtbar$ is of type $\BBB\CCC_l$;
if $\Rt$ is of type $\AAA_{2l-1}$ $(l\ge3)$, then $\Rtbar$ is of type $\CCC_l$;
if $\Rt$ is of type $\DDD_{l+1}$ $(l\ge2)$, then $\Rtbar$ is of type $\BBB_l$;
if $\Rt$ is of type $\EEE_{6}$, then $\Rtbar$ is of type $\FFF_4$.

Recall that $\gl^{\pm\ttau}$ denote the $(\pm1)$-eigenspaces for $\ttau$ in $\gl$.
\end{subsec}

\begin{lemma}[{\cite[Section 3.3.9]{OV2}}]
$\gl^{\ttau}$ is a simple Lie subalgebra of $\gl$
and the adjoint representation of $\gl^{\ttau}$ in $\gl^{-\ttau }$ is irreducible.
\end{lemma}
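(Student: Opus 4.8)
The plan is to identify the root system of $\gl^{\ttau}$ relative to the torus $T_0$ as a \emph{full-rank} subsystem of the restricted root system $\Rtbar$, deduce simplicity from the connectedness of the associated Dynkin graph, and then obtain irreducibility of the module $\gl^{-\ttau}$ from the classification of symmetric pairs (or, failing that, from the explicit models in the four cases).

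First I would recall that $\gl^{\ttau}$ is reductive with $\tl_0=\tl^{\ttau}$ as a Cartan subalgebra: indeed $\tl_0$ is abelian and consists of semisimple elements, and its centralizer in $\gl$ is $\tl$ (because $\TT=\Zm_\GG(\TT_0)$), so its centralizer in $\gl^{\ttau}$ is $\tl\cap\gl^{\ttau}=\tl_0$. Hence the roots of $\gl^{\ttau}$ relative to $\tl_0$ form a subset $\Rtbar_{\ttau}\subseteq\Rtbar$, and by the description of restricted roots recalled just before Proposition~\ref{p:res-rt}, every $\bar\alpha$ with $\alpha\in\Sm$ lies in $\Rtbar_{\ttau}$ — its root space being spanned by $\e\alpha$ when $\tau(\alpha)=\alpha$ and by $\e\alpha+\e{\tau(\alpha)}$ when $\tau(\alpha)\ne\alpha$ — so $\Smbar\subseteq\Rtbar_{\ttau}$. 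In particular the center of $\gl^{\ttau}$, which lies in $\tl_0$ and is annihilated by every root in $\Rtbar_{\ttau}$, is annihilated by $\Smbar$; since $\Smbar$ spans $\tl_0^*$ by Proposition~\ref{p:res-rt}\eqref{i:res-sm}, this center is $0$, confirming that $\gl^{\ttau}$ is semisimple (as already noted in the text).

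Next I would prove simplicity. As $\Smbar\subseteq\Rtbar_{\ttau}\subseteq\Rtbar$ and $\Smbar$ is a basis of the full-rank root system $\Rtbar$ (Proposition~\ref{p:res-rt}\eqref{i:res-sm}), every element of $\Rtbar_{\ttau}$ is a sign-coherent integral combination of $\Smbar$, so $\Smbar$ is also a basis of $\Rtbar_{\ttau}$. The adjacency graph of the Dynkin diagram of $\Rtbar_{\ttau}$ on the vertex set $\Smbar$ records exactly the pairs $\bar\alpha,\bar\beta$ with $(\bar\alpha,\bar\beta)\ne0$, hence coincides with the adjacency graph of the Dynkin diagram of $\Rtbar$; the latter is connected, $\Rtbar$ being one of $\BBB\CCC_l$, $\CCC_l$, $\BBB_l$, $\FFF_4$. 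Therefore $\Rtbar_{\ttau}$ is irreducible and $\gl^{\ttau}$ is simple. Comparing types, one finds $\Rtbar_{\ttau}$ of type $\BBB_l$, $\CCC_l$, $\BBB_l$, $\FFF_4$ in the four cases, i.e. $\gl^{\ttau}\cong\mathfrak{so}_{2l+1}$, $\mathfrak{sp}_{2l}$, $\mathfrak{so}_{2l+1}$, $\mathfrak{f}_4$.

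Finally, for the irreducibility of $\gl^{-\ttau}$ as a $\gl^{\ttau}$-module I would invoke the classical fact that the isotropy representation of an irreducible symmetric pair is irreducible (see, e.g., \cite[Section 3.3.9]{OV2}), or verify it directly on the explicit models: in the four cases $(\gl,\gl^{\ttau},\gl^{-\ttau})$ is, respectively, $(\sgl_{2l+1},\mathfrak{so}_{2l+1},\mathrm{Sym}^2_0\,\C^{2l+1})$, $(\sgl_{2l},\mathfrak{sp}_{2l},\Lambda^2_0\,\C^{2l})$, $(\mathfrak{so}_{2l+2},\mathfrak{so}_{2l+1},\C^{2l+1})$, $(\mathfrak{e}_6,\mathfrak{f}_4,V_{26})$, and each of these modules is well known to be irreducible, of dimension $2l^2+3l$, $2l^2-l-1$, $2l+1$, $26$, matching $\dim\gl-\dim\gl^{\ttau}$. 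This last step is the one that genuinely resists a short structural proof: one checks easily that $\gl^{-\ttau}$ generates $\gl$ and that $[\gl^{-\ttau},\gl^{-\ttau}]=\gl^{\ttau}$, but promoting this to full $\gl^{\ttau}$-irreducibility of $\gl^{-\ttau}$ is essentially the symmetric-pair argument, so I would cite it rather than reprove it — this is where I expect the real work to lie.
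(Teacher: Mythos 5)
Your argument for simplicity is essentially the paper's: both proofs reduce to connectedness of the Dynkin diagram $\Dbar$ of $\gl^{\ttau}$, whose vertex set is $\Smbar$ and whose number of vertices equals $\dim\tl_0$. The one real difference is how connectedness is obtained. You read it off from the tabulated types of $\Rtbar$ (namely $\BBB\CCC_l$, $\CCC_l$, $\BBB_l$, $\FFF_4$), i.e.\ from the classification of the four cases; the paper instead computes the pairings $\langle\bar\alpha_i,\bar\alpha_j^\vee\rangle$ in terms of Cartan numbers of $\Rt$ (using Lemma~\ref{l:Q0P0}) and deduces that simple roots linked in $\D$ restrict to simple roots linked in $\Dbar$, so that connectedness of $\Dbar$ follows from connectedness of $\D$ with no case analysis. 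Your route is valid, but it is somewhat circular relative to the paper's stated purpose of replacing the ``explicit calculations in the four cases'' of \cite{OV2} by a conceptual argument.

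For the irreducibility of $\gl^{-\ttau}$, be careful with the ``classical fact'' you prefer to invoke: for a complex symmetric pair $(\gl,\gl^{\theta})$ with $\gl$ simple, the isotropy module $\gl^{-\theta}$ need \emph{not} be irreducible over $\C$. For the inner involution of $\sgl_{p+q}$ with fixed subalgebra $\mathfrak{s}(\mathfrak{gl}_p\oplus\mathfrak{gl}_q)$ it splits into two summands, as it does for every Hermitian type; so the statement is false in the generality in which you quote it (and if ``irreducible symmetric pair'' is taken to mean irreducibility of the isotropy representation, the appeal is circular). What saves the situation is that $\ttau$ is outer and pinned, and that is precisely what the argument the paper cites (\cite[Section 3.3.11]{OV2}, above Lemma~3.17 there) exploits. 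Your fallback — the explicit list of the four pairs with their isotropy modules and the dimension counts — is correct and does suffice, so your proof goes through; but the preferred branch of your last step, as stated, is a gap rather than a citation of a true theorem.
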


\begin{proof}
In \cite{OV2} the statement is verified by explicit calculations in the four above cases.
Here is a conceptual argument.

We compute the pairing between restricted simple roots and coroots,
with the notation of Lemma~\ref{l:Q0P0}:
$$
\langle\bar\alpha_i,\bar\alpha_j^\vee\rangle=\langle\alpha_i,\alpha_j^\vee\rangle,\qquad
\langle\bar\alpha_i,\bar\alpha_k^\vee\rangle=\langle\alpha_i,\alpha_k^\vee+\tau(\alpha_k^\vee)\rangle
=\langle\alpha_i,\alpha_k^\vee\rangle+\langle\alpha_i,\tau(\alpha_k^\vee)\rangle.
$$
It follows from this computation that simple roots $\alpha_p,\alpha_q$ of $\Rt$
which are linked in the Dynkin diagram $\D$ of $\Rt$,
that is, have negative Cartan number, restrict to simple roots $\bar\alpha_p,\bar\alpha_q$ of $\Rtbar$
which are linked in the Dynkin diagram $\Dbar$ of $\Rtbar$. Hence $\Dbar$ is connected.
Since $\Dbar$ is also the Dynkin diagram of $\gl^{\ttau}$ and the number of its vertices
is $\ell=\dim\tl_0$, the latter Lie algebra is simple.
Now the irreducibility of $\gl^{-\ttau}$ follows by an argument
in \cite[Section 3.3.11]{OV2}, right above Lemma~3.17 of loc.~cit.
\end{proof}

\begin{subsec}\label{ss:ext-outer}
Let $\bar\alpha_0$ denote the lowest weight of $T_0$ in $\gl^{-\ttau }$.
Then $\Smtil=\{\bar\alpha_0,\bar\alpha_1,\dots,\bar\alpha_{\ell}\}$ is an admissible system of roots
in the sense that the Cartan numbers
of all pairs $\bar\alpha_p,\bar\alpha_q$ with $p\ne q$
are non-positive.
It is encoded by a twisted affine Dynkin diagram
$\Dtil=\Dtil(G,T,B,\ttau)=\Dtil(\Rt,\Sm,\tau)$ in the usual way;
see \cite[Section 3.1.7]{OV2}.
There is a unique linear dependence
\begin{equation}\label{e:mj-outer}
m_0\bar\alpha_0+m_1\bar\alpha_1+\dots+m_{\ell}\bar\alpha_{\ell}=0,
\end{equation}
where $m_i$ are positive even integers and $m_0=2$.
These coefficients $m_i$ are tabulated in \cite[Section 3.3.9]{OV2};
see also Table~\ref{t:tab-t} below.
We write only the coefficients $m_i=2$.
For $^2\hm \AAA_{2\ell-1}$ and $^2\hm \DDD_{\ell+1}$, we write also the coefficients $c_i$ (modulo $\Z$)
in the decomposition of a representative
of the generator of $P_0/Q_0\simeq\Z/2\Z$
into a linear combination of simple restricted roots.
We write these coefficients $c_i$ only for simple roots with $m_i=2$.
The vertex corresponding to $\alpha_0$ is painted in black.

Each $x\in\ttl_0(\R)$ has \emph{barycentric coordinates} $x_0,x_1,\dots,x_{\ell}$ defined by
\[\bar\alpha_i(x)=\ii{x_i}\quad (i=1,\dots,\ell), \quad \bar\alpha_0(x)=\ii(x_0-\tfrac12).\]
 They satisfy the identity coming from \eqref{e:mj-outer}:
$$m_0x_0+m_1x_1+\dots+m_{\ell}x_{\ell}=1.$$
\end{subsec}

\tikzset{/Dynkin diagram,mark=o,affine mark=*,
edge length=0.75cm,arrow shape/.style={-{angle 90}}}

\begin{landscape}
\begin{longtable}[c]{l@{\qquad\qquad}c@{\qquad\quad}l@{\qquad}c}
\caption{Coefficients $m_i$ and $c_i$ on twisted affine Dynkin diagrams}
\label{t:tab-t}
\\[7ex]
\endfirsthead
\caption{(continued)}
\\[7ex]
\endhead
\endfoot
$^2\hm \AAA_2$
&\dynkin[%
edge length=0.75cm,
arrow shape/.style={-{angle 90}},
reverse arrows,
labels*={2,}
]
A[2]{2} &&
\\ \\
$^2\hm \AAA_{2\ell}$\ ($\ell\ge2$)
&\dynkin[%
edge length=0.75cm,
arrow shape/.style={-{angle 90}},
reverse arrows,
labels*={2}
]
A[2]{even} &&
\\ \\
$^2\hm \AAA_{2\ell-1}$\ ($\ell\ge3$)
&\dynkin[%
edge length=0.75cm,
arrow shape/.style={-{angle 90}},
labels={2,2,},
labels*={ , ,,,,,,2}
]
A[2]{odd}
& $\bar\omega_1$: &
\dynkin[%
edge length=0.75cm,
arrow shape/.style={-{angle 90}},
labels={,0,},
labels*={,,,,,,,\frac{1}{2}}
]
A[2]{odd}
\\ \\
$^2\hm \DDD_{\ell+1}$\ $(\ell\ge2)$
&\dynkin[%
edge length=0.75cm,
arrow shape/.style={-{angle 90}},
labels*={2,2,2,2,2,2}
]
D[2]{}
& $\bar\omega_\ell$: &
\dynkin[%
edge length=0.75cm,
arrow shape/.style={-{angle 90}},
labels*={,\frac{1}{2},\frac{2}{2},\frac{\ell-2}{2},\frac{\ell-1}{2},\frac{\ell}{2}}
]
D[2]{}
\\ \\
$^2\hm \EEE_6$
&\dynkin[%
edge length=0.75cm,
arrow shape/.style={-{angle 90}},
labels*={2,,,,2}
]
E[2]{6} &&
\end{longtable}
\end{landscape}

\begin{subsec}\label{ss:simplex-outer}
The simplex
\begin{equation*}\label{e:simplex}
\Delta=\{x\in\ttl_0(\R)\mid x_0,x_1,\dots,x_{\ell}\ge0\}
\end{equation*}
is a fundamental domain for the $\Wotil^\ssc$-action on $\ttl_0(\R)$;
see \cite[Section 3.3.10, Prop.\,3.15(3)]{OV2}.
The  group $C_0=\widetilde{P}_0^{\vee}/\widetilde{Q}_0^{\vee}\simeq\Wotil^{\ad}/\Wotil^\ssc$
acts on $\Delta\simeq\ttl_0(\R)/\Wotil^\ssc$
by permuting the barycentric coordinates in accordance with automorphisms of $\Dtil$.
A general description of this action is very similar to the one given in Subsection~\ref{ss:C-action}
for the action of $C$ on the fundamental simplex of $\Wtil^{\ssc}$.
Namely, the nonzero cosets of $C_0$ are represented
by the vectors $\nu\in\wt P_0^{\vee}$ such that $\ii\nu\in\Delta$,
which are exactly $\nu=\bar\omega_k^\vee/2$ such that $\alpha_k$ is moved by $\tau$ and $m_k=2$.
 If $w_0$, resp.\ $w_k$,
denote the longest element in $W_0$, resp.\ in the Weyl group $W_k$
of the root subsystem $\Rtbar_k$ generated by $\Smbar\smallsetminus\{\bar\alpha_k\}$,
then the transformation $x\mapsto w_k\hs w_0\hs x+\ii\bar\omega_k^{\vee}/2$
preserves $\Delta$ and gives the action of the respective coset
$[\bar\omega_k^\vee/2]\in C_0$ on~$\Delta$. The proof given in \cite[Section VI.2.3, Proposition 6]{Bourbaki}
for the non-twisted case works in the twisted case with obvious modifications.

Specifically, if $G$ is of type $\AAA_{2l}$ $(\ell\ge 1)$ or $\EEE_6$, then $C_0=\{0\}$.
If $G$ is of type $\AAA_{2l-1}$ $(\ell\ge3)$ or $\DDD_{l+1}$ $(\ell\ge 2)$,
then $C_0\simeq\Z/2\Z$ is generated by $[\bar\omega_k^\vee/2]$,
where $\bar\alpha_k$ is the unique vertex of $\Dbar$
which is interchanged with $\bar\alpha_0$ under the unique nontrivial (involutive) automorphism of $\Dtil$.
The coset $[\bar\omega_k^\vee/2]$ acts by this automorphism.

We state results of this section as a proposition.
\end{subsec}

\begin{proposition}\label{p:Delta-ttt-outer}
With the assumptions and notation of this section,
the inclusion $\Delta\into \ttl_0(\R)$
induces a $C_0$-equivariant bijective correspondence
between  $\Delta$ and the orbit set of\/ $\widetilde{W}^\ssc _0$ in $\ttl_0(\R)$,
where $C_0$ acts on
$\ttl_0(\R)/\Wotil^\ssc $ via the isomorphism $C_0\simeq \Wotil^{\ad}/\Wotil^\ssc $,
and on $\Delta$ by permutations of coordinates as described above.
\end{proposition}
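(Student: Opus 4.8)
The plan is to deduce the proposition from the material assembled in Subsections~\ref{ss:ext-outer} and~\ref{ss:simplex-outer}, in the same way Proposition~\ref{p:Delta-ttt-inner} is deduced in the inner case; only one point requires a genuine (if short) verification. First I would record the bijection $\Delta\isoto\ttl_0(\R)/\Wotil^\ssc$ induced by the inclusion $\Delta\into\ttl_0(\R)$. This is immediate from the fact, recalled in Subsection~\ref{ss:simplex-outer} and taken from \cite[Section 3.3.10, Prop.\,3.15(3)]{OV2}, that $\Delta$ is a \emph{strict} fundamental domain for the action of $\Wotil^\ssc$ on $\ttl_0(\R)$: every $\Wotil^\ssc$-orbit meets $\Delta$ in exactly one point. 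Under this bijection the barycentric coordinates $(x_0,\dots,x_\ell)$ of Subsection~\ref{ss:ext-outer} become coordinates on the orbit set.

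Next I would transport the $C_0$-action to $\Delta$. Since $\Wotil^\ssc$ is normal in $\Wotil^\ad$ with quotient $C_0\simeq\wt P^\vee_0/\wt Q^\vee_0$, the group $C_0$ acts on $\ttl_0(\R)/\Wotil^\ssc$, hence on $\Delta$. To describe this action concretely I would argue as in \cite[Section VI.2.3, Proposition 6]{Bourbaki}, adapted to the twisted affine setting: every coset of $C_0$ has a representative in $\Wotil^\ad$ of the form $\nu\cdot w$ with $\nu\in\wt P^\vee_0$ and $w\in W_0$, and among such representatives one may be chosen so as to stabilize $\Delta$; the nonzero cosets are then represented precisely by the vectors $\nu=\bar\omega_k^\vee/2$ with $\ii\nu\in\Delta$ --- that is, those indices $k$ with $\alpha_k$ moved by $\tau$ and $m_k=2$ --- the stabilizing representative being $x\mapsto w_k w_0 x+\ii\bar\omega_k^\vee/2$. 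Any such stabilizing affine transformation permutes the vertices of $\Delta$, hence permutes the barycentric coordinates, and, since it preserves all Cartan integers $\langle\bar\alpha_i,\bar\alpha_j^\vee\rangle$, it realizes an automorphism of the twisted affine Dynkin diagram $\Dtil$.

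Finally I would identify these automorphisms case by case, using Table~\ref{t:tab-t}: for $\Rt$ of type $\AAA_{2l}$ or $\EEE_6$ one finds $C_0=\{0\}$, while for $\Rt$ of type $\AAA_{2l-1}$ or $\DDD_{l+1}$ one gets $C_0\simeq\Z/2\Z$ acting by the unique nontrivial automorphism of $\Dtil$, namely the one interchanging $\bar\alpha_0$ with the distinguished vertex $\bar\alpha_k$ --- exactly as stated in Subsection~\ref{ss:simplex-outer}. I expect the main obstacle to be the middle step: verifying that $x\mapsto w_k w_0 x+\ii\bar\omega_k^\vee/2$ genuinely preserves $\Delta$ and represents the coset $[\bar\omega_k^\vee/2]\in C_0$. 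This requires knowing how $w_k w_0$ acts on the simple restricted roots $\bar\alpha_i$ and checking that the translation returns the image to the simplex; everything else is either a citation (the fundamental-domain statement from \cite{OV2}, the diagram-automorphism argument from \cite{Bourbaki}) or a bounded computation on the twisted affine Dynkin diagrams.
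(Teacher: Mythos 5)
Your proposal is correct and follows essentially the same route as the paper: the paper also treats this proposition as a summary of Subsections \ref{ss:ext-outer} and \ref{ss:simplex-outer}, citing \cite[Section 3.3.10, Prop.\,3.15(3)]{OV2} for the fundamental-domain statement and observing that the argument of \cite[Section VI.2.3, Proposition 6]{Bourbaki} for the transformations $x\mapsto w_k\hs w_0\hs x+\ii\bar\omega_k^{\vee}/2$ carries over to the twisted case with obvious modifications, followed by the same case-by-case identification of $C_0$ and its action on $\Dtil$.
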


\section{Case of an $\R$-simple non-absolutely simple group}
\label{s:nas}

\begin{subsec}
Now suppose that $\GG$ is simply connected and
 not simple, but $\thG$ is $\R$-simple.
This means that $\GG\simeq\GG'\times_\R\GG'$,
where $\GG'$ is a compact, simply connected, {\em simple} $\R$-group,
and $\theta$ swaps the two simple factors of $\GG$,
that is, $\theta(g_1,g_2)=(g_2,g_1)$ for $g_1,g_2\in G'$.

We choose a maximal torus $\TT=\TT'\times\TT'$ in $\GG$, where $\TT'$ is a maximal torus in~$\GG'$.
We also choose a Borel subgroup $B=B'\times B'$ in $G$ containing $T$,
where $B'$ is a Borel subgroup in $G'$ containing $T'$.

Then $\theta$ preserves $T$ and $B$.
Furthermore, a pinning of $(G',T',B')$ gives rise to a ``doubled'' pinning of $(G,T,B)$ preserved by $\theta$.
Hence $\theta=\ttau$.

The torus $\TT_0=\TT^\ttau$ given by
\[ T_0=\{(t,t)\,\mid\, t\in T'\}\subset T\]
is a maximal torus in the diagonal subgroup $\GG^\ttau\subset\GG=\GG'\times_\R\GG'$.
The torus $\TT_1$ is given by
\[ T_1=\{(t,t^{-1})\,\mid\, t\in T'\}\subset T.\]
We shall identify $G^\ttau$ with $G'$ and $T_0$ with $T'$, both embedded diagonally.
The representation of $G^\ttau$ in $\gl^{-\ttau}=\{(\xi,-\xi)\mid\xi\in\gl'\}$
is equivalent to the adjoint representation of $\gl'$.
\end{subsec}

\begin{subsec}\label{ss:res-nas}
Let $P^{\hs\prime}$, $Q^{\hs\prime}$, $P^{\hs\prime\hs\vee}$, $Q^{\hs\prime\hs\vee}$
denote the lattices of weights, roots, coweights, and coroots of $G'$ with respect to $T'$, correspondingly.
With the notation of Section~\ref{s:lattices}, we have decompositions:
\begin{align*}
P        &= P^{\hs\prime}\oplus P^{\hs\prime},               &   Q &= Q^{\hs\prime}\oplus Q^{\hs\prime}, \\
P^{\vee} &= P^{\hs\prime\hs\vee}\oplus P^{\hs\prime\hs\vee}, &
Q^{\vee} &= Q^{\hs\prime\hs\vee}\oplus Q^{\hs\prime\hs\vee}.
\end{align*}
The restriction of characters from $T$ to $T_0=T'$ is the map
$$\chi=(\chi_1,\chi_2)\,\longmapsto\,\bar\chi=\chi_1+\chi_2.$$
The restricted root system $\Rtbar$ is just the root system of $G'$ with respect to $T'$.
The restricted simple roots $\bar\alpha_1,\dots,\bar\alpha_\ell$
are the simple roots of $G'$ and $\bar\alpha_0$ is the lowest root of $G'$.
The lattices $P_0$ and $Q_0$ are identified with $P'$ and $Q'$, respectively.
The lattices $P_0^{\vee}=P^{\hs\prime\hs\vee}$ and $Q_0^{\vee}=Q^{\hs\prime\hs\vee}$
are diagonally embedded in $P^{\vee}$ and $Q^{\vee}$, respectively.
The projection
from $\tl$ to $\tl_0$ is the map
$$x=(x_1,x_2)\,\longmapsto\,\bar x=\half(x_1+x_2).$$
Under these identifications, we have
$$
\widetilde{P}_0^{\vee}=\half P^{\hs\prime\hs\vee}\quad\text{and}\quad
\widetilde{Q}_0^{\vee}=\half Q^{\hs\prime\hs\vee}.
$$
\end{subsec}

\begin{subsec}\label{ss:barycentric-nas}
The group $W_0$ is isomorphic to the Weyl group of $(G',T')$ and
$\Wotil^\ssc$ is isomorphic to the affine Weyl group of $G'$.
The simplex $\Delta=\half\Delta'$  is a  fundamental domain in $\ttl_0(\R)=\ttl'(\R)$ for $\Wotil^\ssc$,
where  $\Delta'$ is the fundamental simplex of the affine Weyl group of $G'$, as in Section \ref{s:inner}.

The barycentric coordinates on $\ttl_0(\R)=\ttl'(\R)$ are defined by
\begin{equation*}
\alpha_i(x)=\ii x_i\quad\text{for }i=1,\dots,\ell,\quad\alpha_0(x)=\ii(x_0-\half),
\end{equation*}
which coincides with definition \eqref{e:barycentric} in Section \ref{s:inner} except for
$\half$ instead of
$1$ in the last equality, so that
$x_0,x_1,\dots,x_\ell\ge0$ are still
the defining inequalities for~$\Delta$.
We set $m_i=2m'_i$ $(i=0,1,\dots,\ell)$, where $m_i'$ are the coefficients for $G'$ as in \eqref{e:sum-mj}. Then
\begin{equation*}
m_0x_0+m_1x_1+\dots+m_{\ell}x_{\ell}=1.
\end{equation*}
The group $C_0\simeq P^{\hs\prime\hs\vee}/Q^{\hs\prime\hs\vee}$
acts on $\Delta$ by permuting the barycentric coordinates in accordance
with automorphisms of the extended Dynkin diagram $\Dtil=\Dtil(G',T',B')$,
as in Section \ref{s:inner}.

We state results of this section as a proposition.
\end{subsec}

\begin{proposition}\label{p:Delta-ttt-nas}
With the assumptions and notation of this section,
the inclusion $\Delta\into \ttl_0(\R)\simeq\ttl'(\R)$
induces a $C_0$-equivariant bijective correspondence
between  $\Delta$ and the orbit set of $\Wotil^\ssc$ in $\ttl_0(\R)$,
where $C_0\simeq P^{\hs\prime\hs\vee}/Q^{\hs\prime\hs\vee}$ acts on
$\ttl_0(\R)/\Wotil^\ssc $ via the isomorphism $C_0\simeq \Wotil^\ad/\Wotil^\ssc$,
and on $\Delta=\half\Delta'$ by permutations of coordinates as described in Section \ref{s:inner}.
\end{proposition}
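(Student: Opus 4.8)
The plan is to deduce this statement from Proposition~\ref{p:Delta-ttt-inner} applied to the simple factor $\GG'$, by a single linear rescaling of $\ttl_0(\R)$ that absorbs the factor $\half$ appearing in \ref{ss:res-nas} and \ref{ss:barycentric-nas}. First I would collect the identifications of \ref{ss:res-nas}: under the diagonal identifications $G^\ttau=G'$, $T_0=T'$, $\ttl_0(\R)=\ttl'(\R)$, the restricted root system $\Rtbar$ is the root system of $G'$, so $W_0=W(G',T')$, while $\widetilde Q_0^\vee=\half Q^{\prime\vee}$ and $\widetilde P_0^\vee=\half P^{\prime\vee}$; hence $\Wotil^\ssc=\half Q^{\prime\vee}\rtimes W_0$ and $\Wotil^\ad=\half P^{\prime\vee}\rtimes W_0$. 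Denote by $\Wtil_{G'}^{\ssc}=Q^{\prime\vee}\rtimes W_0$ and $\Wtil_{G'}^{\ad}=P^{\prime\vee}\rtimes W_0$ the affine Weyl groups attached to $G'$ as in Section~\ref{s:inner}, acting on $\ttl'(\R)$, and let $\Delta'\subset\ttl'(\R)$ and $C'=P^{\prime\vee}/Q^{\prime\vee}\simeq\Wtil_{G'}^{\ad}/\Wtil_{G'}^{\ssc}$ be as there.

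The next step is to introduce the dilation $\delta\colon\ttl_0(\R)\to\ttl'(\R)$, $x\mapsto 2x$, which is a linear automorphism since $\ttl_0(\R)=\ttl'(\R)$. I would check that $\delta$ intertwines the affine action of $\Wotil^\ssc$ on $\ttl_0(\R)$ with that of $\Wtil_{G'}^{\ssc}$ on $\ttl'(\R)$: it commutes with the common linear part $W_0$ and carries the translation $x\mapsto x+\ii\nu$ by $\nu\in\half Q^{\prime\vee}$ to the translation by $2\nu\in Q^{\prime\vee}$, and the same holds for $\Wotil^\ad$ and $\Wtil_{G'}^{\ad}$. Hence $\delta$ induces a bijection $\ttl_0(\R)/\Wotil^\ssc\isoto\ttl'(\R)/\Wtil_{G'}^{\ssc}$ intertwining the action of $C_0=\widetilde P_0^\vee/\widetilde Q_0^\vee$ with that of $C'$ through the isomorphism $C_0\isoto C'$, $[\nu]\mapsto[2\nu]$, compatibly with the identifications $C_0\simeq\Wotil^\ad/\Wotil^\ssc$ and $C'\simeq\Wtil_{G'}^{\ad}/\Wtil_{G'}^{\ssc}$. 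Moreover $\delta$ maps $\Delta=\half\Delta'$ onto $\Delta'$, and comparing the definition of barycentric coordinates in \ref{ss:barycentric-nas} with \eqref{e:barycentric} one sees that $\delta$ sends the barycentric coordinates $(x_0,\dots,x_\ell)$ of a point of $\ttl_0(\R)$ to the barycentric coordinates, in the sense of Section~\ref{s:inner}, of its image in $\ttl'(\R)$ (up to the harmless overall factor $2$); so permuting the coordinates on $\Delta$ corresponds to the same permutation of coordinates on $\Delta'$.

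Finally I would apply Proposition~\ref{p:Delta-ttt-inner} to $G'$: the inclusion $\Delta'\into\ttl'(\R)$ induces a $C'$-equivariant bijection between $\Delta'$ and $\ttl'(\R)/\Wtil_{G'}^{\ssc}$, with $C'$ acting on $\Delta'$ by permutations of barycentric coordinates according to the automorphisms of $\Dtil(G',T',B')$ described there. Transporting this bijection back along $\delta^{-1}$ and using the three facts established in the preceding paragraph yields precisely the claimed $C_0$-equivariant bijection between $\Delta$ and $\ttl_0(\R)/\Wotil^\ssc$ with the asserted action. I do not expect a genuine obstacle: the only delicate point is keeping the factor $\half$ consistent across the lattices $\widetilde Q_0^\vee$, $\widetilde P_0^\vee$, the groups $C_0$ and $C'$, and the two normalizations of barycentric coordinates, and this is handled uniformly by the single dilation $\delta$.
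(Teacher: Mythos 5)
Your proposal is correct and follows essentially the same route as the paper: the paper states this proposition as a summary of Subsections \ref{ss:res-nas}--\ref{ss:barycentric-nas}, where $\Wotil^\ssc$ is identified with the affine Weyl group of $G'$ and $\Delta$ with $\half\Delta'$, which is precisely the rescaling you make explicit via the dilation $x\mapsto 2x$. Your version merely spells out the intertwining of the translation lattices, the simplices, the barycentric coordinates, and the groups $C_0\simeq C'$ under this dilation, all of which check out.
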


\section{Square roots of a central element}
\label{s:sqroots}

\begin{subsec}
We retain Notation \ref{n:not2} assuming additionally that
$\GG=(G,\sigma_c)$ is a compact \emph{semisimple} $\R$-group, not necessarily simply connected.
We write $\GG^\ssc$ for the universal cover of $\GG$.
The involutions $\sigma_c$ and $\theta$
lift to commuting involutions of~$G^\ssc$, which we denote by the same letters. Consider
the twisted group $\thG$ and its universal cover $\thG^\ssc$.
We have a decomposition into $\R$-simple factors
\begin{equation*}
\thG^\ssc = \thG^{(1)} \times_\R\dots\times_\R \thG^{(r)}.
\end{equation*}
It corresponds to a decomposition
\begin{equation*}
 \GG^\ssc =  \GG^{(1)} \times_\R\dots\times_\R  \GG^{(r)},
\end{equation*}
in which each pair $(\GG^{(k)},\theta^{(k)})$ is indecomposable,
where $\theta^{(k)}$ is the restriction of $\theta$ to $\GG^{(k)}$.

We consider
the spaces $\tl_0$, $\tl_0^*$, $\ttl_0(\R)$, and various lattices in them, see Section~\ref{s:lattices};
each of these objects decomposes into a direct sum of the corresponding objects for $G^{(k)}$,
which we denote by adding the superscript ${}^{(k)}$.
In particular,
\[\ttl_0(\R)=\ttl_0^{(1)}(\R)\oplus\dots \oplus \ttl_0^{(r)}(\R),\]
Similarly, the groups $W_0$ and $\Wotil^\ssc$ decompose
into the direct products of the corresponding groups for $G^{(k)}$.
We obtain a decomposition of the orbit set
\begin{equation*}
\ttl_0(\R)/\Wotil^\ssc=\ttl^{(1)}_0(\R)/\Wotil^{(1)\hshs\ssc}\times\dots\times\ttl^{(r)}_0(\R)/\Wotil^{(r)\hshs\ssc}
\end{equation*}
compatible with the decomposition of the group
\begin{equation*}
C_0=C^{(1)}_0\oplus\dots\oplus C^{(r)}_0,
\end{equation*}
where the group $C_0=\wt P^{\vee}_0\hmm/\wt Q^{\vee}_0$ acts on the orbit set $\ttl_0(\R)/\Wotil^\ssc$,
and each group $C^{(k)}_0=\wt P^{(k)\vee}_0\hmm\hmm/\wt Q^{(k)\vee}_0$
acts on the respective orbit set $\ttl_0^{(k)}(\R)/\Wotil^{(k)\hshs\ssc}$.
\end{subsec}

\begin{subsec}
There are three cases:
\begin{enumerate}
\renewcommand{\theenumi}{\alph{enumi}}
  \item\label{i:inner-k} $\GG^{(k)}$ is simple and $\theta^{(k)}$ is an {\em inner} involution of $\GG$;
  \item\label{i:outer-k} $\GG^{(k)}$ is simple and $\theta^{(k)}$ is an {\em outer} involution of $\GG$;
  \item\label{i:swap-k} $\GG^{(k)}=\GG^{(k)\hs\prime}\times_\R \GG^{(k)\hs\prime\prime}$ and
  $\theta^{(k)}$ swaps the isomorphic simple factors $\GG^{(k)\hs\prime}$ and $\GG^{(k)\hs\prime\prime}$.
\end{enumerate}
They were examined in detail in Sections \ref{s:inner}, \ref{s:outer}, and \ref{s:nas}, respectively.

Let $\Smtil=\Smtil^{(1)}\sqcup\dots\sqcup\Smtil^{(r)}$ denote the union
of the extended sets of simple restricted roots of $(G^{(k)},\ttau^{(k)})$,
where $\tau^{(k)}=\theta^{(k)}_\brd\in\Aut\BRD(G^{(k)})$;
see Subsections \ref{ss:ext-inner}, \ref{ss:ext-outer}, and \ref{ss:res-nas}.
We identify $\Smtil$  with the set of vertices of the diagram
\[\Dtil=\Dtil(G,T,B,\ttau)=\Dtil^{(1)}\sqcup\dots\sqcup \Dtil^{(r)},\]
where $\Dtil^{(k)}=\Dtil(G^{(k)},T^{(k)},B^{(k)},\ttau^{(k)})$
is an affine Dynkin diagram (twisted or not), as defined in Sections \ref{s:inner}--\ref{s:nas}.

Each point $x=(x^{(1)},\dots,x^{(r)})\in\ttl_0(\R)$ has barycentric coordinates $(x_\beta)_{\beta\in\Smtil}$\hssh,
which are defined separately for each component $x^{(k)}\in\ttl_0^{(k)}(\R)$
(see Subsections \ref{ss:barycentric-inner}, \ref{ss:ext-outer}, and \ref{ss:barycentric-nas}).
They satisfy
\begin{equation}\label{e:sum-bcc-k}
\sum_{\beta\in\Smtil^{(k)}}\!\!\! m_\beta x_\beta=1\quad\text{for each }k=1,\dots,r,
\end{equation}
where $m_\beta$ are the coefficients of the linear dependence of $\Smtil^{(k)}$
normalized as in Subsections \ref{ss:barycentric-inner}, \ref{ss:ext-outer}, and \ref{ss:barycentric-nas}.

The product
\[\Delta=\Delta^{(1)}\times\dots\times\Delta^{(r)}\]
is a fundamental domain for the group $\Wotil^\ssc$ acting on $\ttl_0(\R)$,
where $\Delta^{(k)}$ are the fundamental simplices
for the respective affine Weyl groups $\Wotil^{(k)\hshs\ssc}$ acting on $\ttl_0^{(k)}(\R)$;
see Subsections \ref{ss:barycentric-inner}, \ref{ss:simplex-outer}, and \ref{ss:barycentric-nas}.
The defining inequalities for $\Delta$ are $x_{\beta}\ge0$ over all $\beta\in\Smtil$.
The quotient group ${F}_0=\Wotil/\Wotil^\ssc\simeq\widetilde{X}_0^{\vee}/\widetilde{Q}_0^{\vee}$
acts on $\Delta\simeq\ttl_0(\R)/\Wotil^\ssc$
as a subgroup of $C_0$, where $C_0$ acts by permuting the barycentric coordinates in accordance with automorphisms of $\Dtil$.
\end{subsec}

\begin{proposition}\label{p:Delta-ttt-ss}
The inclusion $\Delta\into \ttl_0(\R)$
and the map $\Exphat\colon\ttl_0(\R) \rightarrow {\TT_0(\R)\cdot\htau}$ of  \eqref{e:ttt0-e0} induce
bijective correspondences between the orbit sets for the actions of ${F}_0$ in $\Delta$,
of\/ $\Wotil$ in $\ttl_0(\R)$, and of\/ $\widehat{W}_0$ in $\TT_0(\R)\cdot\htau$.
\end{proposition}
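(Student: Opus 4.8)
The plan is to assemble the material of Sections~\ref{s:logarithm}--\ref{s:nas} together with the decomposition set up in the present section. First I would invoke the decomposition $\GG^\ssc=\GG^{(1)}\times_\R\dots\times_\R\GG^{(r)}$ into indecomposable pairs $(\GG^{(k)},\theta^{(k)})$, with the induced decompositions $\ttl_0(\R)=\bigoplus_k\ttl_0^{(k)}(\R)$, $\Wotil^\ssc=\prod_k\Wotil^{(k)\ssc}$, $C_0=\bigoplus_k C_0^{(k)}$, and $\Delta=\prod_k\Delta^{(k)}$. For each $k$, depending on which of the types \eqref{i:inner-k}, \eqref{i:outer-k}, \eqref{i:swap-k} the pair $(\GG^{(k)},\theta^{(k)})$ belongs to, the relevant one of Propositions~\ref{p:Delta-ttt-inner}, \ref{p:Delta-ttt-outer}, \ref{p:Delta-ttt-nas} provides a $C_0^{(k)}$-equivariant bijection $\Delta^{(k)}\isoto\ttl_0^{(k)}(\R)/\Wotil^{(k)\ssc}$ induced by the inclusion $\Delta^{(k)}\into\ttl_0^{(k)}(\R)$.

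Next I would take the product over $k$. Since $\Wotil^\ssc$ acts on $\ttl_0(\R)$ componentwise, one has $\ttl_0(\R)/\Wotil^\ssc=\prod_k\bigl(\ttl_0^{(k)}(\R)/\Wotil^{(k)\ssc}\bigr)$, so the product of the bijections above is a bijection $\Delta\isoto\ttl_0(\R)/\Wotil^\ssc$, still induced by the inclusion $\Delta\into\ttl_0(\R)$ and equivariant for the componentwise action of $C_0=\bigoplus_k C_0^{(k)}$; by the description at the end of Section~\ref{s:sqroots}, this is exactly the action of $C_0$ on $\ttl_0(\R)/\Wotil^\ssc$, realised on $\Delta$ by permutations of barycentric coordinates in accordance with automorphisms of $\Dtil$. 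I would then restrict this action to the subgroup ${F}_0\subseteq C_0$ and pass to orbit sets, obtaining $\Delta/{F}_0\isoto\bigl(\ttl_0(\R)/\Wotil^\ssc\bigr)/{F}_0$, and finally identify the right-hand side with $\ttl_0(\R)/\Wotil$ via the canonical bijection recalled in Section~\ref{s:logarithm} coming from the isomorphism ${F}_0\simeq\Wotil/\Wotil^\ssc$. The composite $\Delta/{F}_0\isoto\ttl_0(\R)/\Wotil$ is then induced by the inclusion $\Delta\into\ttl_0(\R)$, as required.

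The last step is to apply Lemma~\ref{l:Wtil-What}: the map $\Exphat\colon\ttl_0(\R)\to\TT_0(\R)\cdot\htau$ induces a bijection $\Exp_\orb\colon\ttl_0(\R)/\Wotil\isoto(\TT_0(\R)\cdot\htau)/\Wohat$, and composing it with the bijection from the previous step yields all the asserted correspondences. Since every ingredient is already in place, the argument is essentially bookkeeping; the one subtlety I would be careful about is that ${F}_0$ need \emph{not} split as a product $\prod_k{F}_0^{(k)}$ --- the group $\GG$ itself need not be a direct product even though $\GG^\ssc$ is --- so one must work with the embedding ${F}_0\into C_0=\bigoplus_k C_0^{(k)}$ and the componentwise $C_0$-action rather than trying to factor the ${F}_0$-action, and one should verify that the product of the individual fundamental-domain bijections is again induced by the inclusion and is $C_0$-equivariant.
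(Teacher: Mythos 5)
Your proposal is correct and follows essentially the same route as the paper's proof: componentwise application of Propositions \ref{p:Delta-ttt-inner}, \ref{p:Delta-ttt-outer}, and \ref{p:Delta-ttt-nas}, passage to the product to get a $C_0$-equivariant bijection $\Delta\isoto\ttl_0(\R)/\Wotil^\ssc$, restriction to $F_0\subseteq C_0$, and a final appeal to Lemma~\ref{l:Wtil-What}. Your cautionary remark that $F_0$ need not factor as a product and must be handled via its embedding into $C_0$ is exactly the point the paper's phrase ``Since $F_0\subseteq C_0$'' is silently relying on.
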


\begin{proof}
Propositions \ref{p:Delta-ttt-inner}, \ref{p:Delta-ttt-outer}, and \ref{p:Delta-ttt-nas} show
that for each $k=1,\dots,r$,  the $C_0^{(k)}$-equivariant  map  $\Delta^{(k)}\longrightarrow\ttl_0^{(k)}(\R)/\Wotil^{(k)\hshs\ssc}$  is bijective.
Now it follows from the product structure  of $\Delta$, $\ttl_0(\R)$, $\Wotil^{\ssc}$, and $C_0$,
that the $C_0$-equivariant map $$\Delta\longrightarrow\ttl_0(\R)/\Wotil^{\ssc}$$ is bijective.
Since $F_0\subseteq C_0$, this bijective map is $F_0$-equivariant and induces a bijection of the orbit sets
\[\Delta/F_0\,\longisoto\, (\ttl_0(\R)/\Wotil^\ssc)/F_0=\ttl_0(\R)/\Wotil.\]
Finally, Lemma~\ref{l:Wtil-What} gives a bijection  $\ttl_0(\R)/\Wotil\isoto (\TT_0(\R)\cdot\htau)/\widehat{W}_0$\hs.
\end{proof}

Our aim is to describe a smaller orbit set for the action of $\Wohat$ on $(T_0\cdot\htau)^z_2\subset\TT_0(\R)\cdot\htau$.
We do it in terms of a combinatorial notion which we introduce now.

\begin{definition}
A {\em Kac labeling} of $\Dtil$ is a family of nonnegative integer  numerical labels
$\pp=(p_\beta)_{\beta\in\Dtil}$
at the vertices $\beta\in\Smtil$ of $\Dtil$ satisfying
\begin{equation}\label{e:vk-p-0}
 \sum_{\beta\in\Smtil^{(k)}}\!\!\! m_\beta p_\beta=2\quad\text{for each }k=1,\dots,r.
\end{equation}
Labelings of this kind were used by Kac \cite{Kac} in classification
of automorphisms of finite order (specifically, involutions) of semisimple Lie algebras.
We denote the set of Kac labelings of $\Dtil$ by $\Km(\Dtil)$.
\end{definition}

Note that a Kac labeling $\pp$ of $\Dtil=\Dtil^{(1)}\sqcup\dots\sqcup \Dtil^{(r)}$ is the same as
a family $(\pp^{(1)},\dots,\pp^{(r)})$, where each $\pp^\vvk$ is a Kac labeling of $\Dtil^\vvk$.

\begin{subsec}
To any $x\in\ttl_0(\R)$, we assign a family $\pp=\pp(x)=(p_\beta)_{\beta\in\Smtil}$
of \emph{real} numbers $p_\beta=2x_\beta$, which satisfy \eqref{e:vk-p-0} by \eqref{e:sum-bcc-k}.
This correspondence identifies $\ttl_0(\R)$ with the subspace of $\R^{\Smtil}$
defined by equations \eqref{e:vk-p-0}.
We denote the inverse correspondence as
$$\pp\,\mapsto\, x=x(\pp)=\frac\ii2\sum_{\beta\in\Smbar}p_\beta\bar\omega^\vee_\beta,$$
where $\bar\omega^\vee_\beta=\bar\omega^\vee_i$ is the fundamental coweight corresponding
to a simple restricted root $\beta=\bar\alpha_i$, with the notation of Lemma~\ref{l:Q0P0}.
We have $x\in\Delta$ if and only if $p_\beta\ge0$ for all $\beta\in\Smtil$.

For any character $\lambda\in X_0$, $\lambda=\sum_{\beta\in\Smbar}\hs c_\beta\hs\beta$, $c_\beta\in\Q$, put
$$
\langle\lambda,\pp\rangle=\sum_{\beta\in\Smbar}c_\beta\hs p_\beta=2\lambda(x(\pp))/\ii.
$$
Note that $\langle\lambda,\pp\rangle\in\Q$ whenever $\pp\in\Km(\Dtil)$,
and $\langle\lambda,\pp\rangle\in\Z$ if furthermore $\lambda\in Q_0$.

Recall that $\theta=\inn(\tto)\circ\ttau$, where $\tto\in T_0$ and $z=\tto^2\in Z(G)\cap T_0$. We write
\begin{equation}\label{e:l-z}
z=\exp 2\pi\ii\zeta,\quad \text{where } \zeta\in\tl_0\hs.
\end{equation}
For a character $\lambda\in X_0\subset\tl_0^*$, we have
\begin{equation}\label{e:z}
\lambda(z)=\lambda(\exp 2\pi\ii\zeta)=\exp \lambda(2\pi\ii\zeta)=\exp 2\pi\ii\lambda(\zeta).
\end{equation}
Since $z$ is an element of finite order in $T_0$, we see that $\lambda(z)$ is a root of unity.
Hence by \eqref{e:z} $\lambda(\zeta)\in\Q$ and, moreover, $\lambda(\zeta)\in\Z$ for $\lambda\in Q_0$
since $z\in Z(G)$ and therefore $\lambda(z)=1$ in this case.
It follows from \eqref{e:z} that the image of $\lambda(\zeta)$ in $\Q/\Z$
depends only on $z$ and not on the choice of $\zeta$, which is determined modulo~$X_0^\vee$.

The following theorem gives a combinatorial description
of the set $(T_0\cdot\htau)^z_2/\Wohat $ in terms of Kac labelings.
\end{subsec}

\begin{theorem}\label{t:p-z}
Let $\zeta\in\tl_0$ be as in \eqref{e:l-z}.
There is a  canonical bijection between the set $(T_0\cdot\htau)_2^z\hs/\Wohat$
and the set of ${F}_0$-orbits
in the set of Kac labelings $\pp\in\Km(\Dtil)$
satisfying
\begin{equation}\label{e:cond-z}
\langle\lambda,\pp\rangle\equiv\lambda(\zeta)\pmod\Z\quad\text{for all }[\lambda]\in X_0/Q_0\hs.
\end{equation}
The bijection sends the $F_0$-orbit of $\pp\in\Km(\Dtil)$ to the $\Wohat$-orbit of $\Exphat(x(\pp))\in T_0\cdot\htau$
with the notation of Section \ref{s:logarithm}.
\end{theorem}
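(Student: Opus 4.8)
The plan is to derive the theorem from Proposition~\ref{p:Delta-ttt-ss} by identifying, inside the fundamental domain $\Delta$, exactly those points that $\Exphat$ carries into $(T_0\cdot\htau)^z_2\subseteq\TT_0(\R)\cdot\htau$. Since $z\in Z(G)$, conjugation by $\Nt$, and hence the $\Wohat$-action, preserves $(T_0\cdot\htau)^z_2$; so this set is a union of $\Wohat$-orbits and $(T_0\cdot\htau)^z_2/\Wohat$ is genuinely a subset of $(\TT_0(\R)\cdot\htau)/\Wohat$. By Proposition~\ref{p:Delta-ttt-ss}, $\Exphat$ together with the inclusion $\Delta\into\ttl_0(\R)$ induces a bijection $\Delta/F_0\isoto(\TT_0(\R)\cdot\htau)/\Wohat$. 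Let $S\subseteq\Delta$ be the preimage of $(T_0\cdot\htau)^z_2$ under $\Delta\into\ttl_0(\R)\to\TT_0(\R)\cdot\htau$, the second arrow being $\Exphat$. Because $(T_0\cdot\htau)^z_2$ is $\Wohat$-stable and the two composites $\Delta\to(\TT_0(\R)\cdot\htau)/\Wohat$ coincide, $S$ is a union of $F_0$-orbits and the bijection restricts to $S/F_0\isoto(T_0\cdot\htau)^z_2/\Wohat$. Thus everything reduces to an explicit description of $S$.

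The key computation is the following. For $x\in\ttl_0(\R)$ one has $\htau^2=1$ and $\htau g\htau^{-1}=\ttau(g)$ in $\Ghat$, and $\ttau(\exp 2\pi x)=\exp 2\pi x$ since $\ttau(x)=x$; hence $\Exphat(x)^2=(\exp 2\pi x\cdot\htau)^2=\exp(2\pi\cdot 2x)$. Writing $x=x(\pp)$ with barycentric coordinates $p_\beta=2x_\beta$, the identity $\langle\lambda,\pp\rangle=2\lambda(x(\pp))/\ii$ gives $\lambda(\Exphat(x)^2)=\exp(2\pi\ii\langle\lambda,\pp\rangle)$ for every $\lambda\in X_0$, while $\lambda(z)=\exp(2\pi\ii\lambda(\zeta))$ by \eqref{e:z}. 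Since characters separate points of $T_0$, the condition $\Exphat(x)\in(T_0\cdot\htau)^z_2$ is equivalent to
\[\langle\lambda,\pp\rangle\equiv\lambda(\zeta)\pmod\Z\qquad\text{for all }\lambda\in X_0.\]
Taking $\lambda=\bar\alpha_i\in Q_0$ gives $p_i=\langle\bar\alpha_i,\pp\rangle\equiv\bar\alpha_i(\zeta)\equiv 0\pmod\Z$, because $z\in Z(G)$ forces $\bar\alpha_i(z)=1$ and hence $\bar\alpha_i(\zeta)\in\Z$; together with $x\in\Delta\Leftrightarrow p_\beta\ge0$ for all $\beta$ and the normalization $\sum_{\beta\in\Smtil^{(k)}}m_\beta p_\beta=2$ (automatic from \eqref{e:sum-bcc-k}) this gives $\pp\in\Km(\Dtil)$. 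Conversely, for $\pp\in\Km(\Dtil)$ one has $\langle\lambda,\pp\rangle\in\Z$ whenever $\lambda\in Q_0$, so the congruence above for all $\lambda\in X_0$ is equivalent to its holding on representatives of $X_0/Q_0$, i.e.\ to \eqref{e:cond-z}. Hence $S=\{x(\pp)\mid\pp\in\Km(\Dtil)\text{ satisfying }\eqref{e:cond-z}\}$.

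To finish, the map $\pp\mapsto x(\pp)$ is a bijection from the set of Kac labelings satisfying \eqref{e:cond-z} onto $S$, inverted by $x\mapsto(2x_\beta)_{\beta\in\Smtil}$, and it is $F_0$-equivariant since $F_0\subseteq C_0$ acts on both sides through the same permutations of $\Smtil$ induced by automorphisms of $\Dtil$. Composing with the bijection $S/F_0\isoto(T_0\cdot\htau)^z_2/\Wohat$ from the first paragraph yields the required canonical bijection, which by construction sends the $F_0$-orbit of $\pp$ to the $\Wohat$-orbit of $\Exphat(x(\pp))$, as claimed.

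I expect the work to be bookkeeping rather than a single difficult argument: one must keep the lattices $Q_0\subseteq X_0$ in $\tl_0^*$ and their duals straight, use Lemma~\ref{l:Q0P0} so that $\langle\bar\alpha_i,\bar\omega^\vee_j\rangle=\delta_{ij}$ (which validates $\lambda(x(\pp))=\tfrac{\ii}{2}\langle\lambda,\pp\rangle$ even in the non-reduced case $\Rtbar=\BBB\CCC_\ell$), and check that the $F_0$-action transported from $\Delta$ to the set of Kac labelings is exactly the permutation action named in the statement. The one genuinely delicate step is the implication from ``$\Exphat(x)^2=z$ with $x\in\Delta$'' to ``$\pp$ is integral'', where the centrality of $z$ is used essentially.
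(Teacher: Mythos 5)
Your proposal is correct and follows essentially the same route as the paper: both rest on Proposition~\ref{p:Delta-ttt-ss}, translate the condition $\Exphat(x)^2=z$ into the congruence \eqref{e:cond-z} via the duality of $X_0$ and $X_0^\vee$ (your "characters separate points" is the paper's "$2x=\ii\zeta+\ii\nu$ for some $\nu\in X_0^\vee$"), and then deduce integrality of the labels from the simple-root case of the congruence together with the relation $\sum m_\beta p_\beta=2$. The only cosmetic difference is that you restrict the bijection of Proposition~\ref{p:Delta-ttt-ss} to a $\Wohat$-stable subset and its $F_0$-stable preimage $S\subseteq\Delta$, whereas the paper phrases the same step as pulling back $\Wohat$-orbits to $\Wotil$-orbits.
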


\begin{proof}
Any element of $\TT_0(\R)\cdot\htau$ is of the form $t\cdot\htau=\Exphat(x)$ for some $x\in\ttl_0(\R)$,
so that $t=\Exp(x)\in T_0$ with the notation of Section \ref{s:lattices}.
We have $t\cdot\htau\in(T_0\cdot\htau)_2^z$ if and only if $t^2=z$.
Since $t^2=\Exp(2x)$ and $z=\Exp(\ii\zeta)$, the  condition $t^2=z$ is equivalent to
\begin{align*}
      && 2x&=\ii\zeta+\ii\nu && \text{for some }\nu\in X_0^\vee \\
\iff  && \ii\langle\lambda,\pp\rangle &= 2\lambda(x)=\ii\lambda(\zeta)+\ii\lambda(\nu)\equiv\ii\lambda(\zeta)\pmod{\ii\Z}
      && \text{for all }\lambda\in X_0\\
\iff  && \langle\lambda,\pp\rangle&\equiv\lambda(\zeta)\pmod\Z && \text{for all }\lambda\in X_0.
\end{align*}
In particular, taking $\lambda=\beta\in\Smbar\subset Q_0$, we get $p_\beta\equiv\beta(\zeta)\in\Z$.
Equality \eqref{e:vk-p-0} implies $p_\beta\in\Z$ for all $\beta\in\Smtil$, that is, $\pp\in\Z^\Smtil$.

The set of all $x\in\ttl_0(\R)$ such that $\pp=\pp(x)$ satisfies \eqref{e:cond-z} is $\Wotil$-stable,
being the preimage of a $\Wohat$-stable set $(T_0\cdot\htau)_2^z$ under $\Exphat$.
By Proposition~\ref{p:Delta-ttt-ss}, the $\Wotil$-orbits in this subset of $\ttl_0(\R)$ are the preimages
of the $\Wohat$-orbits in $(T_0\cdot\htau)_2^z$, and intersect $\Delta$ in $F_0$-orbits.
This yields a bijection between $(T_0\cdot\htau)_2^z\hs/\Wohat$
and the set of $F_0$-orbits of all $x\in\Delta$ with $\pp=\pp(x)$ satisfying \eqref{e:cond-z}.
The condition $x\in\Delta$ reads as $p_\beta\ge0$ for all $\beta\in\Smtil$.
Thus all $p_\beta$ are integer and non-negative, that is,  $\pp\in\Km(\Dtil)$, which completes the proof.
\end{proof}

\begin{subsec}\label{ss:can-R-str}
Theorem~\ref{t:p-z} implies, in particular, that the element $\tthat=\tto\cdot\htau\in(T_0\cdot\htau)_2^z$
is $\Wohat$-equivalent to an element of the form $\Exphat(x(\qq))$ for some Kac labeling $\qq\in\Km(\Dtil)$.
The action of $\Wohat$ on $(T_0\cdot\htau)_2^z$ comes from the conjugation action \eqref{e:Ntau-action} of $\NNt(\R)$
(see Lemma~\ref{l:N-M}\eqref{i:M} and Lemma~\ref{l:Ntau-action}).
If we replace $\tthat$ by an $\NNt(\R)$-conjugate, the involution $\theta$ will be replaced
by a conjugate one $\inn(g)\circ\theta\circ\inn(g)^{-1}$ with $g\in\NNt(\R)$.
This does not change the $\R$-group $\thG$ up to an isomorphism.
Thus we may assume without loss of generality that $\tthat=\Exphat(x(\qq))$ and $\tto=\Exp(x(\qq))$.

The corresponding semisimple $\R$-group $\thG$ is defined by the following data:
\begin{itemize}
  \item[\cc] a Dynkin diagram $\D$, which determines the Lie algebra $\gl$ and the simply connected group $G^\ssc$;
  \item[\cc] a lattice $X^\vee$ in between $P^\vee$ and $Q^\vee$ or,
  equivalently, a subgroup $F=X^\vee/Q^\vee\subseteq C=P^\vee/Q^\vee\simeq Z(G^\ssc)$,
  which determines the complex algebraic group $G=G^\ssc/F$;
  \item[\cc] a diagram automorphism $\tau\in\Aut\D$ preserving $F$,
  which determines the class of $\theta$ in $\Aut(G)/\Inn(G)$;
  \item[\cc] a Kac labeling $\qq\in\Km(\Dtil)$, which determines $\tto=\Exp(x(\qq))$
  and, in turn, the involution $\theta=\inn(\tto)\circ\ttau$
  and the real structure $\sigma=\theta\circ\sigma_c$ on $G$.
\end{itemize}
We write
\begin{equation*}\label{e:G(D,tau,qq,F)}
\thG=\GG(D,F,\tau,\qq).
\end{equation*}
The above discussion shows that every semisimple $\R$-group is isomorphic to a group of the form
$\GG(D,F,\tau,\qq)$ for some $D$, $F$, $\tau$, and $\qq$.
\end{subsec}

\section{Main theorem}
\label{s:mt-ss}

\begin{subsec}
We are now ready to state the main result of this article.  We retain Notation~\ref{n:not2}
and the notation from Sections \ref{s:lattices} and \ref{s:sqroots}.

Consider a semisimple $\R$-group $\GG(\D,F,\tau,\qq)$, as defined in Subsection~\ref{ss:can-R-str},
where $D$ is a Dynkin diagram, $F$ is a subgroup of $C=P^\vee/Q^\vee$,
$\tau\in\Aut\D$ is a diagram involution, and $\qq\in\Km(\Dtil)$ is a Kac labeling.

For any two Kac labelings $\pp,\qq\in\Km(\Dtil)$, set
$$\nu_{\pp,\qq}=2\bigl(x(\pp)-x(\qq)\bigr)/\ii=\sum_{\beta\in\Smbar}(p_\beta-q_\beta)\bar\omega^\vee_\beta\in P_0^\vee.$$
Let $\Km(\Dtil,X_0,\qq)$ denote the set of Kac labelings $\pp\in\Km(\Dtil)$ satisfying the condition
\begin{equation}\label{e:cond-q}
\langle\lambda,\pp\rangle\equiv\langle\lambda,\qq\rangle\pmod\Z\quad\text{for all }[\lambda]\in X_0/Q_0\hs.
\end{equation}
Recall that the finite abelian group $F_0=\wt X^\vee_0/\wt Q^\vee_0$ acts
by diagram automorphisms on $\Dtil$, and hence on $\Km(\Dtil)$.
\end{subsec}

\begin{main-theorem}\label{t:main-ss}
For $(D,F,\tau,\qq)$ as above, the group $F_0$, when acting on $\Km(\Dtil)$, preserves $\Km(\Dtil,X_0,\qq)$\hs.
We have $\nu_{\pp,\qq}\in X_0^\vee=\Hom(\GmC,T_0)$ whenever $\pp\in \Km(\Dtil,X_0,\qq)$, and the  map
\begin{equation}\label{e:Kac-H1}
\Km(\Dtil,X_0,\qq)\hs \to\hs (T_0)_2\into \Zl1(\R,\GG(D,F,\tau,\qq)),\quad \pp\mapsto \nu_{\pp,\qq}(-1)\in (T_0)_2
\end{equation}
induces a bijection between the set of $F_0$-orbits in $\Km(\Dtil,X_0,\qq)$
and the first Galois cohomology set $\Ho1(\R,\GG(D,F,\tau,\qq))$.
\end{main-theorem}

\begin{proof}
Recall that we have $\tto=\Exp(x(\qq))$ and $z=\tto^2=(\tto\cdot\htau)^2$.
Condition \eqref{e:cond-z} for a Kac labeling $\pp\in\Km(\Dtil)$
is equivalent to condition \eqref{e:cond-q},
since $\qq$ itself satisfies \eqref{e:cond-z} by Theorem~\ref{t:p-z}.
For $\pp\in\Km(\Dtil,X_0,\qq)$, it follows from \eqref{e:cond-q} that
$$\lambda(\nu_{\pp,\qq})=2\lambda\bigl(x(\pp)-x(\qq)\bigr)/\ii= \langle\lambda,\pp\rangle-\langle\lambda,\qq\rangle\in\Z\quad\text{for all }\lambda\in X_0,$$
that is, $\nu_{\pp,\qq}\in X_0^\vee$. Again by Theorem~\ref{t:p-z}, the map
\[ \Km(\Dtil,X_0,\qq)\hs\longrightarrow\hs(T_0\cdot\htau)_2^z\hs,\quad \pp\hs\longmapsto\hs\Exphat(x(\pp))=\Exp(x(\pp))\cdot\htau \]
induces a bijection $\Km(\Dtil,X_0,\qq)\hs/F_0\,\isoto\, (T_0\cdot\htau)_2^z\hs/\Wohat$\hs.
By  Corollary \ref{c:times-tau} the map
\[(T_0\cdot\htau)_2^z \,\longrightarrow\, (T_0)_2 \into \Zl1(\R,\GG(D,F,\tau,\qq)),\quad
          t\cdot\htau\longmapsto t\hs \tto^{-1} = t\cdot\Exp(-x(\qq)) \]
induces a bijection $(T_0\cdot\htau)_2^z\hs/\Wohat \isoto \Ho1(\R,\GG(D,F,\tau,\qq)).$
The composite bijection sends the $F_0$-orbit of $\pp$ to the cohomology class of
\[\Exp(x(\pp))\cdot\Exp(-x(\qq))=\Exp\bigl(x(\pp)-x(\qq)\bigr)=\exp 2\pi\bigl(x(\pp)-x(\qq)\bigr)= \exp\pi\ii\hs\nu_{\pp,\qq}=\nu_{\pp,\qq}(-1),\]
where the last equality follows from \eqref{e:exp-2-pi-i-z}. This completes the proof.
\end{proof}

Let us see what Theorem~\ref{t:main-ss} gives us
in the two particular cases of simply connected and adjoint groups.
A semisimple $\R$-group $\GG(D,F,\tau,\qq)$ is simply connected if and only if $F=0$.
In this case, we have $F_0=0$, $X=P$, and $X_0=P_0$.

\begin{corollary}\label{c:main-sc}
There is a canonical bijection between 
$\Km(\Dtil,P_0,\qq)$
and $\Ho1(\R,\GG(D,0,\tau,\qq))$ given by the map \eqref{e:Kac-H1}.
\end{corollary}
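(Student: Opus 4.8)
The plan is to derive Corollary~\ref{c:main-sc} directly from Main Theorem~\ref{t:main-ss} by specializing to the case $F=0$. First I would observe that $\GG(D,0,\tau,\qq)$ being simply connected means $G=G^\ssc$, so that $T=T^\ssc$ and hence $X=\X^*(T)=P$, equivalently $X^\vee=Q^\vee$. Under the projection $\tl^*\to\tl_0^*$ the lattice $X$ maps onto $X_0=\X^*(T_0)$, while by Subsection~\ref{ss:s-res-rt-wt} the lattice $P$ maps onto $P_0$; since $X=P$ this gives $X_0=P_0$ in the present situation. Consequently the set $\Km(\Dtil,X_0,\qq)$ of Kac labelings $\pp\in\Km(\Dtil)$ satisfying the congruence \eqref{e:cond-q} for all $[\lambda]\in X_0/Q_0$ is exactly $\Km(\Dtil,P_0,\qq)$.

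Next I would check that the acting group $F_0$ is trivial. By definition $F_0=\wt X_0^\vee/\wt Q_0^\vee$, where $\wt X_0^\vee$ and $\wt Q_0^\vee$ are the projections to $\tl_0$ of $X^\vee$ and of $Q^\vee$, respectively. As $X^\vee=Q^\vee$ we get $\wt X_0^\vee=\wt Q_0^\vee$, so $F_0=0$. (In the reducible case this also follows factor by factor from the decomposition $F_0=F_0^{(1)}\oplus\dots\oplus F_0^{(r)}$ recorded in Section~\ref{s:sqroots}, each summand vanishing because $F^{(k)}=0$.) Therefore the $F_0$-action on $\Km(\Dtil,P_0,\qq)$ by diagram automorphisms is trivial and the set of $F_0$-orbits coincides with $\Km(\Dtil,P_0,\qq)$ itself.

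Finally I would invoke Main Theorem~\ref{t:main-ss}, which asserts that the map \eqref{e:Kac-H1}, $\pp\mapsto\nu_{\pp,\qq}(-1)$, induces a bijection between the set of $F_0$-orbits in $\Km(\Dtil,X_0,\qq)$ and $\Ho1(\R,\GG(D,F,\tau,\qq))$. Substituting $F=0$, using the identification $X_0=P_0$, and the triviality of $F_0$ just established, this bijection becomes precisely the asserted canonical bijection $\Km(\Dtil,P_0,\qq)\isoto\Ho1(\R,\GG(D,0,\tau,\qq))$ given by \eqref{e:Kac-H1}. I do not expect any real obstacle here; the only point requiring a moment's care is the bookkeeping identification $X_0=P_0$ together with $\wt X_0^\vee=\wt Q_0^\vee$, i.e.\ that passing from $T$ to $T_0$ is compatible with the equalities $X=P$ and $X^\vee=Q^\vee$, which is immediate once one recalls that all the lattices involved are defined as images of $X$ and $X^\vee$ under the relevant projection maps.
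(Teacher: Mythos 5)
Your proposal is correct and follows exactly the paper's route: the corollary is obtained from Main Theorem \ref{t:main-ss} by noting that $F=0$ forces $X=P$, $X^\vee=Q^\vee$, hence $X_0=P_0$ and $\wt X_0^\vee=\wt Q_0^\vee$, so $F_0=0$ and the $F_0$-orbits are singletons. The extra care you take with the lattice bookkeeping under the projection to $\tl_0$ is exactly the (brief) justification the paper gives before stating the corollary.
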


This result in the special case when $\tau=\id$
was stated by  E.~B.~Vinberg in a 2008 email message to the first-named author.

In the adjoint case, we have $X=Q$, $F=C$, and hence $X_0=Q_0$, $F_0=C_0$.
It follows that condition \eqref{e:cond-q} is empty and $\Km(\Dtil,Q_0,\qq)=\Km(\Dtil)$.

\begin{corollary}\label{c:Kac}
There is a canonical bijection between 
$\Km(\Dtil)/C_0$ and $\Ho1(\R,\GG(D,C,\tau,\qq))$
given by the map \eqref{e:Kac-H1}.
\end{corollary}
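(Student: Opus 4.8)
The plan is to obtain Corollary~\ref{c:Kac} as a direct specialization of Main Theorem~\ref{t:main-ss} to the adjoint case $F = C$. First I would unwind what $F = C$ means for the lattices: by definition $F = X^\vee/Q^\vee$ and $C = P^\vee/Q^\vee$, so $F = C$ forces $X^\vee = P^\vee$, and dually $X = Q$ (equivalently, $G$ is adjoint, $G = G^\ad$ and $T = T^\ad$). Projecting to $\tl_0$ as in Section~\ref{s:lattices}, the lattice $X_0 = \X^*(T_0)$ is the image of $X = Q$, which by Lemma~\ref{l:Q0P0} is $Q_0$, the root lattice of $\Rtbar$; and $\wt X_0^\vee$, the image of $X^\vee = P^\vee$ in $\tl_0$, equals $\wt P_0^\vee$. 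Consequently $F_0 = \wt X_0^\vee/\wt Q_0^\vee = \wt P_0^\vee/\wt Q_0^\vee = C_0$, and the action of $F_0$ on $\Km(\Dtil)$ via the embedding $F_0 \into C_0$ is the full $C_0$-action by diagram automorphisms described in Sections~\ref{s:inner}--\ref{s:nas}.

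Next I would check that the defining condition~\eqref{e:cond-q} of $\Km(\Dtil, X_0, \qq)$ becomes vacuous: it is imposed for all classes $[\lambda] \in X_0/Q_0$, and since $X_0 = Q_0$ this index group is trivial, so every Kac labeling $\pp \in \Km(\Dtil)$ satisfies it. Hence $\Km(\Dtil, X_0, \qq) = \Km(\Dtil)$. Correspondingly $\nu_{\pp,\qq} = \sum_{\beta}(p_\beta - q_\beta)\bar\omega^\vee_\beta$ always lies in $P_0^\vee = X_0^\vee$, so the map~\eqref{e:Kac-H1} is defined on all of $\Km(\Dtil)$.

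Finally, substituting $X_0 = Q_0$, $F = C$, $F_0 = C_0$, and $\Km(\Dtil, X_0, \qq) = \Km(\Dtil)$ into Main Theorem~\ref{t:main-ss}, its conclusion reads precisely that the map~\eqref{e:Kac-H1} induces a bijection between the set of $C_0$-orbits in $\Km(\Dtil)$ and $\Ho1(\R, \GG(D,C,\tau,\qq))$, which is the assertion. There is no genuine obstacle here; the only point needing care is matching the adjoint normalization $X = Q$, $X^\vee = P^\vee$ of the (co)character lattices with the general notation of Section~\ref{s:lattices}, which is what the first step does.
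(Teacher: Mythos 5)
Your proposal is correct and is essentially identical to the paper's own argument: the paper likewise observes that in the adjoint case $X=Q$, $F=C$, hence $X_0=Q_0$ and $F_0=C_0$, so condition \eqref{e:cond-q} is vacuous and $\Km(\Dtil,Q_0,\qq)=\Km(\Dtil)$, after which Main Theorem~\ref{t:main-ss} gives the bijection directly. No issues.
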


The adjoint $\R$-group $\GG(D,C,\tau,\qq)=\thG^\ad$ is the identity component of
the algebraic group $\Aut G$ equipped with the real structure given by the conjugation action of $\sigma=\theta\circ\sigma_c$.
It is well known that the cohomology classes in $\Ho1(\R,\thG^\ad)$ are in a bijective correspondence
with the $G^\ad$-conjugacy classes of real structures $\sigma'=\theta'\circ\sigma_c$ on $G$
such that $\theta'_\brd=\tau$; see Serre \cite[Section III.1]{Serre}.
Thus Corollary~\ref{c:Kac} gives a classification of inner
forms of a semisimple group $\thG$ in terms of Kac labelings.
This result goes back to Kac \cite{Kac0}, \cite{Kac};
see also \cite[Sections 5.1.4--5.1.6]{OV} and \cite[Sections 4.1.3--4.1.5]{OV2}.

\section{Twisting and functoriality}
\label{s:twisting}

\begin{subsec}\label{ss:twist-def}
Recall that if $\GG=(G,\sigma)$ is a real algebraic group and $a\in \Zl1(\R,\GG^\ad)$,
then the $a$-twist of $\GG$ is $\0\GG{a}:=(G,a\circ \sigma)$.
Here $\GG^\ad$ is identified with
the group $\Inn G$ equipped with the real structure given by the conjugation action of $\sigma$.
The same notation $\0\GG{a}$ is used for the twist of $\GG$
 by a cocycle $a\in \Zl1(\R,\GG)$ acting on $G$ by~$\inn(a)$.
See Serre \cite[Section I.5.3]{Serre}.

We use the notation of Subsection \ref{ss:can-R-str}.
Having fixed $\D$, $F$, and $\tau$, we shall write $\GG_\qq=\GG(D,F,\tau,\qq)$ for brevity.

Let $\Exp^\ad:\tl\to T^{\ad}$ denote the scaled exponential map $x\mapsto\exp(2\pi x)$ in the adjoint group.
Identifying $G^\ad$ with $\Inn G$, we have $\Exp^\ad(x)=\inn(\Exp(x))$.
\end{subsec}

\begin{proposition}\label{p:twisting-ad}
Let $\qq'\in \Km(\Dtil)$ and consider the 1-cocycle
$$a=\Exp^\ad\bigl(x(\qq')-x(\qq)\bigr)
\in (T^\ad_0)_2\subset \Zl1(\R,\GG_\qq^\ad).$$
Then there is a canonical isomorphism  $\0{\GG_\qq}a\isoto\GG_{\qq'}$\hs.
\end{proposition}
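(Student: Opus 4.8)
\emph{Proof plan.} The plan is to show that, with the normalizations fixed in Subsection~\ref{ss:can-R-str}, the twist $\0{\GG_\qq}a$ is \emph{equal} to $\GG_{\qq'}$ as a pair consisting of $G$ together with a real structure, so that the asserted canonical isomorphism is simply $\id_G$. Recall $\GG_\qq=(G,\sigma)$ with $\sigma=\theta\circ\sigma_c$, $\theta=\inn(\tto)\circ\ttau$, $\tto=\Exp(x(\qq))$, and likewise $\GG_{\qq'}=(G,\sigma')$ with $\sigma'=\inn\bigl(\Exp(x(\qq'))\bigr)\circ\ttau\circ\sigma_c$. Set $s=\Exp\bigl(x(\qq')-x(\qq)\bigr)\in T_0$; under the identification $G^\ad=\Inn G$ we then have $a=\Exp^\ad\bigl(x(\qq')-x(\qq)\bigr)=\inn(s)$.

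First I would verify that $a$ is a genuine cocycle in $\Zl1(\R,\GG_\qq^\ad)$. Since $x(\qq')-x(\qq)=\tfrac{\ii}{2}\nu_{\qq',\qq}$ with $\nu_{\qq',\qq}=\sum_{\beta\in\Smbar}(q'_\beta-q_\beta)\bar\omega^\vee_\beta\in P_0^\vee$ (Lemma~\ref{l:Q0P0}) and $\ker(\Exp^\ad|_{\tl_0})=\ii P_0^\vee=\ii(P^\vee)^\tau$, one gets $a^2=\Exp^\ad(\ii\nu_{\qq',\qq})=1$, so $a\in(T_0^\ad)_2$; in particular $s^2\in Z(G)$, which is finite since $G$ is semisimple, so $s$ is of finite order. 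As $\TT_0$ is a compact torus, every element of finite order in $T_0$ is fixed by $\sigma_c$; moreover $\ttau$ fixes $T_0=(T^\ttau)^0$ pointwise and $\inn(\tto)$ fixes $s$ because $\tto,s$ lie in the commutative group $T_0$. Hence $\sigma(s)=\theta(\sigma_c(s))=\theta(s)=s$, so $\sigma^\ad(a)=\inn(\sigma(s))=a$ and $a\cdot\sigma^\ad(a)=a^2=1$, which is the cocycle identity.

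Then I would compute the twisted real structure directly. By the definition of the twist (Subsection~\ref{ss:twist-def}), $\0{\GG_\qq}a=(G,\,a\circ\sigma)=(G,\,\inn(s)\circ\sigma)$. Using that $\Exp$ is a homomorphism of the additive group $\tl$, we have $\inn(s)\circ\inn(\tto)=\inn(s\,\tto)$ and $s\,\tto=\Exp\bigl(x(\qq')-x(\qq)\bigr)\cdot\Exp\bigl(x(\qq)\bigr)=\Exp\bigl(x(\qq')\bigr)$, hence
\[
\inn(s)\circ\sigma=\inn(s)\circ\inn(\tto)\circ\ttau\circ\sigma_c=\inn\bigl(\Exp(x(\qq'))\bigr)\circ\ttau\circ\sigma_c=\sigma'.
\]
Therefore $\0{\GG_\qq}a=(G,\sigma')=\GG_{\qq'}$, and $\id_G$ is the desired isomorphism. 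The whole argument is short; the only point needing care is the first step, namely that $a$ genuinely represents a class in the cohomology of the \emph{adjoint} group with its $\sigma$-twisted Galois action, and this comes down to the elementary fact that $s$ is a finite-order element of the compact torus $\TT_0$ and so is fixed both by $\sigma_c$ and by $\ttau$. I expect no serious obstacle.
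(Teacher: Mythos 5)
Your proof is correct and follows essentially the same route as the paper: the key step is the one-line computation $a\circ\sigma_\qq=\inn(s)\circ\inn(\tto)\circ\ttau\circ\sigma_c=\inn\bigl(\Exp(x(\qq'))\bigr)\circ\ttau\circ\sigma_c=\sigma_{\qq'}$, which is exactly the paper's argument. Your preliminary verification that $a$ is indeed a cocycle (via $\nu_{\qq'\!,\qq}\in P_0^\vee$ and the fact that finite-order elements of the compact torus $\TT_0$ are fixed by $\sigma_c$ and $\ttau$) is a correct addition; the paper simply asserts this as part of the statement.
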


\begin{proof}
Let $\sigma_\qq$ denote the real structure on $G$ corresponding to the real form
$\GG_\qq$; then $\sigma_\qq=\Exp^\ad\bigl(x(\qq)\bigr)\circ\ttau\circ\sigma_c$.
Similarly, $\sigma_{\qq'}=\Exp^\ad\bigl(x(\qq')\bigr)\circ\ttau\circ\sigma_c$.
We see that
\[a\circ\sigma_\qq=
      \Exp^\ad\bigl(x(\qq')-x(\qq)\bigr)\circ\Exp^\ad\bigl(x(\qq)\bigr)\circ\ttau\circ\sigma_c
      =\Exp^\ad\bigl(x(\qq')\bigr)\circ\ttau\circ\sigma_c
      =\sigma_{\qq'}\hs,\]
as required.
\end{proof}

Now let $\qq'\in \Km(\Dtil,X_0,\qq)$, that is, $\qq'\in\Km(\Dtil)$
and $\langle\lambda,\qq'\rangle-\langle\lambda,\qq\rangle\in\Z$ for all $[\lambda]\in X_0/Q_0$.
With the notation of Theorem~\ref{t:main-ss},
consider the 1-cocycle $$a=\nu_{\qq'\!,\qq}(-1)=\Exp\bigl(x(\qq')-x(\qq)\bigr)\in (T_0)_2\subset \Zl1(\R,\GG_\qq)$$
and the twisting bijection
$\tw_a\colon \Ho1(\R,\0{\GG_\qq}a)\to \Ho1(\R,\GG_\qq)$
of Serre \cite[I.5.3, Proposition 35  bis]{Serre}.
By Proposition \ref{p:twisting-ad} we may identify $\0{\GG_\qq}a$ with $\GG_{\qq'}$.
Thus we obtain a map
\begin{equation*}
\tw_a\colon \Ho1(\R,\GG_{\qq'})\to \Ho1(\R,\GG_\qq)
\end{equation*}
sending the neutral cohomology class $[1]\in\Ho1(\R,\GG_{\qq'})$  to $[a]\in \Ho1(\R,\GG_\qq)$.

\begin{proposition}\label{p:inner-twist}
$\tw_a[\nu_{\pp,\qq'}(-1)]=[\nu_{\pp,\qq}(-1)]$ for all $\pp\in \Km(\Dtil,X_0,\qq')=\Km(\Dtil,X_0,\qq)$.
\end{proposition}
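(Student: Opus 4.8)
The plan is to unwind the definition of the twisting bijection $\tw_a$ à la Serre and then observe that the map $\pp\mapsto\nu_{\pp,\qq}(-1)$ is "additive in the base point": passing from $\qq$ to $\qq'$ shifts the cocycle by exactly $a=\nu_{\qq',\qq}(-1)$, which is what twisting by $a$ does. So the proof will be a one-line computation inside the abelian torus $T_0$, preceded by some bookkeeping.

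First I would recall the concrete form of $\tw_a$ from Serre \cite[I.5.3, Proposition 35 bis]{Serre}. For $\Gamma=\{1,\gamma\}$ a cocycle is determined by its value at $\gamma$, and if $\sigma_\qq$ denotes the real structure of $\GG_\qq$ and $\sigma'=\inn(a)\circ\sigma_\qq=\sigma_{\qq'}$ that of $\0{\GG_\qq}a$ (by Proposition \ref{p:twisting-ad}, where $\inn(a)=\Exp^\ad\bigl(x(\qq')-x(\qq)\bigr)$), then $\tw_a$ sends a cocycle $b\in\Zl1(\R,\0{\GG_\qq}a)$ to $b\cdot a\in G$; one checks directly that $b\cdot a\in\Zl1(\R,\GG_\qq)$, since $b\cdot\sigma'(b)=1$ gives $a\,\sigma_\qq(b)\,a^{-1}=b^{-1}$, whence $(ba)\cdot\sigma_\qq(ba)=b\,\bigl(a\,\sigma_\qq(b)\,a^{-1}\bigr)\,a\,\sigma_\qq(a)=b\cdot b^{-1}\cdot 1=1$ because $a$ is a cocycle for $\GG_\qq$. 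Under the equality of pairs $\0{\GG_\qq}a=(G,a\circ\sigma_\qq)=(G,\sigma_{\qq'})=\GG_{\qq'}$ of Proposition \ref{p:twisting-ad}, the element $\nu_{\pp,\qq'}(-1)\in(T_0)_2$, which is a cocycle for $\GG_{\qq'}$ by Main Theorem \ref{t:main-ss}, may be taken as the $b$ above. (Here I also use that $\Km(\Dtil,X_0,\qq')=\Km(\Dtil,X_0,\qq)$: the hypothesis $\qq'\in\Km(\Dtil,X_0,\qq)$ makes condition \eqref{e:cond-q} relative to $\qq'$ and relative to $\qq$ coincide, so the statement is well-posed and $\nu_{\pp,\qq}(-1)\in(T_0)_2\subset\Zl1(\R,\GG_\qq)$ is legitimate by Main Theorem \ref{t:main-ss}.)

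Then I would run the computation. By the proof of Main Theorem \ref{t:main-ss} we have $\nu_{\pp,\qq'}(-1)=\Exp\bigl(x(\pp)-x(\qq')\bigr)$ and $a=\nu_{\qq',\qq}(-1)=\Exp\bigl(x(\qq')-x(\qq)\bigr)$, both lying in the commutative torus $T_0$; since $\Exp\colon\tl_0\to T_0$ is a homomorphism,
\begin{align*}
\tw_a\bigl[\nu_{\pp,\qq'}(-1)\bigr]
&=\bigl[\,\nu_{\pp,\qq'}(-1)\cdot\nu_{\qq',\qq}(-1)\,\bigr]
=\bigl[\,\Exp\bigl(x(\pp)-x(\qq')\bigr)\cdot\Exp\bigl(x(\qq')-x(\qq)\bigr)\,\bigr]\\
&=\bigl[\,\Exp\bigl(x(\pp)-x(\qq)\bigr)\,\bigr]=\bigl[\,\nu_{\pp,\qq}(-1)\,\bigr].
\end{align*}
Equivalently, one may simply note that $\nu_{\pp,\qq'}+\nu_{\qq',\qq}=\nu_{\pp,\qq}$ in $X_0^\vee$ straight from the definition of $\nu_{\cdot,\cdot}$, and that $\nu\mapsto\nu(-1)$ is a homomorphism $X_0^\vee\to T_0$. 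Since the whole argument takes place inside the single abelian group $T_0$, the only point requiring attention is conventional bookkeeping — fixing the correct side ($b\cdot a$, not $a\cdot b$) in Serre's twisting formula and verifying that the isomorphism of Proposition \ref{p:twisting-ad} transports cocycles as expected — and that ordering ambiguity is in any case harmless here; so I anticipate no real obstacle.
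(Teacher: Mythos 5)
Your proposal is correct and follows essentially the same route as the paper: the paper's proof likewise observes that $\tw_a$ is induced on cocycles by $a'\mapsto a'a$, that $\nu_{\pp,\qq}=\nu_{\pp,\qq'}+\nu_{\qq',\qq}$, and that the product $\nu_{\pp,\qq'}(-1)\cdot\nu_{\qq',\qq}(-1)$ equals $\nu_{\pp,\qq}(-1)$. Your extra verification that $b\cdot a$ is indeed a cocycle for $\GG_\qq$ is correct bookkeeping that the paper leaves to Serre's reference.
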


\begin{proof}
Note that $\nu_{\pp,\qq}=\nu_{\pp,\qq'}+\nu_{\qq',\qq}$.
The map $\tw_a$ is induced by the map on cocycles
$$\Zl1(\R,\GG_{\qq'})\to \Zl1(\R,\GG_\qq),\quad a'\mapsto a'a,$$
sending $\nu_{\pp,\qq'}(-1)$ to
\begin{equation*}
\nu_{\pp,\qq'}(-1)\cdot a= \nu_{\pp,\qq'}(-1)\cdot\nu_{\qq',\qq}(-1)= \nu_{\pp,\qq}(-1).\qedhere
\end{equation*}
\end{proof}

\begin{subsec}
Let
\[\varphi\colon \GG\to \GG'\]
be a {\em normal} homomorphism of not necessarily compact semisimple $\R$-groups.
Here ``normal" means that $\varphi(\GG)$ is normal in $\GG'$.
It is easy to describe the induced map
\begin{equation}\label{e:phi_*}
\varphi_*\colon \Ho1(\R,\GG)\to \Ho1(\R,\GG')
\end{equation}
using the descriptions of $\Ho1(\R,\GG)$ and $\Ho1(\R,\GG')$ given in Main Theorem \ref{t:main-ss}.
In Proposition \ref{p:phi_*} below we state the corresponding assertion
in the special case when $\varphi$ is an isogeny (a surjective homomorphism with finite kernel).
We leave the task of stating the general assertion to the interested reader.
\end{subsec}

\begin{subsec}
Let  $\varphi\colon \GG\to \GG'$ be an isogeny
of semisimple {\em not necessarily compact} $\R$-groups.
We compute the induced  map \eqref{e:phi_*} in Galois cohomology.

Let $\TT\subset \GG$ be a maximal torus containing a maximal compact torus $\TT_0$ of $\GG$.
Set $\TT'=\varphi(\TT)$, $\TT_0'=\varphi(\TT_0)\subseteq \TT'$.
Then $\TT'$ is a maximal torus of $\GG'$ and $\TT_0'$ is a maximal compact torus of $\GG'$.
Let $X,X',X_0,X'_0$ denote the character lattices of $T,T',T_0,T'_0$, respectively.
Write
\[\GG=\GG(D,F,\tau,\qq),\qquad \GG'=\GG(D,F',\tau,\qq)\]
(with the same $D,\tau,\qq$).
Here $F=X/Q$ and $F'=X'/Q$.
For any $\pp\in\Km(\Dtil)$, the vector $\nu_{\pp,\qq}$ may be regarded
as a cocharacter of $T_0$ if $\pp\in\Km(\Dtil,X_0,\qq)$,
and as a cocharacter of $T'_0$ if $\pp\in\Km(\Dtil,X_0'\hs,\qq)$,
in which case we denote it as $\nu^{\hssh\prime}_{\pp,\qq}$.
Then by Main Theorem \ref{t:main-ss} we have
\begin{align*}
\Ho1(\R,\GG)  &= \{\hs               [\nu_{\pp,\qq}(-1)] \hs\mid\hs \pp\in \Km(\Dtil,X_0,\qq)     \hs\}, \\
\Ho1(\R,\GG') &= \{\hs [\nu^{\hssh\prime}_{\pp,\qq}(-1)] \hs\mid\hs \pp\in \Km(\Dtil,X_0'\hs,\qq) \hs\}.
\end{align*}
Note that $\Km(\Dtil,X_0\hs,\qq)\subseteq\Km(\Dtil,X_0',\qq)$, since $X_0\supseteq X_0'$.
\end{subsec}

\begin{proposition}\label{p:phi_*}
For $\pp\in \Km(\Dtil,X_0\hs,\qq)$ we have  $\varphi(\nu_{\pp,\qq}(-1))=\nu^{\hssh\prime}_{\pp,\qq}(-1)$, and hence
\[\varphi_*[\nu_{\pp,\qq}(-1)]=[\nu^{\hssh\prime}_{\pp,\qq}(-1)].\]
\end{proposition}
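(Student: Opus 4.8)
The plan is to reduce everything to the observation that an isogeny induces an \emph{isomorphism} on Lie algebras, so that the combinatorial vectors $\nu_{\pp,\qq}$ are literally unchanged and only the lattice in which they are required to lie changes. First I would record that $d\varphi$ identifies $\tl$ with $\tl'$, and accordingly $\tl_0=\tl^\ttau$ with $\tl_0'=\tl'^{\ttau}$ (the diagram automorphism $\ttau$ being common to $\GG$ and $\GG'$ and intertwined by $\varphi$), so that under the canonical embeddings $\X_*(T_0)\into\tl_0$ and $\X_*(T_0')\into\tl_0'$ of \ref{not-conv} the map $\varphi_*\colon\X_*(T_0)\to\X_*(T_0')$ becomes the inclusion $X_0^\vee\into X_0'^\vee$ of full-rank sublattices of $\tl_0$. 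All the ingredients entering Main Theorem~\ref{t:main-ss} — the affine diagram $\Dtil$, the restricted simple roots, the fundamental coweights $\bar\omega^\vee_\beta\in P_0^\vee$, the points $x(\pp),x(\qq)\in\ttl_0(\R)$, and hence the vector $\nu_{\pp,\qq}=\sum_{\beta\in\Smbar}(p_\beta-q_\beta)\bar\omega^\vee_\beta\in P_0^\vee$ — are built solely from $\D$, $\tau$, and the Kac labelings, which are the same for $\GG$ and $\GG'$; in particular $\varphi$ maps $\tto=\Exp(x(\qq))\in T_0$ to the element $\tto'\in T_0'$ given by the same formula, so $\varphi$ intertwines $\theta$ with $\theta'$, $\sigma_c$ with $\sigma_c'$, $\sigma$ with $\sigma'$, and $\TT_0$ with $\TT_0'$.

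Next, fix $\pp\in\Km(\Dtil,X_0,\qq)$. By the computation carried out in the proof of Main Theorem~\ref{t:main-ss}, condition \eqref{e:cond-q} forces $\nu_{\pp,\qq}\in X_0^\vee$, so that $\nu_{\pp,\qq}\colon\GmC\to T_0$ is a genuine cocharacter and $\nu_{\pp,\qq}(-1)\in (T_0)_2\subseteq\Zl1(\R,\GG)$ represents the class $[\nu_{\pp,\qq}(-1)]$. Composing $\nu_{\pp,\qq}$ with $\varphi|_{\TT_0}\colon\TT_0\to\TT_0'$ yields a cocharacter of $T_0'$; by naturality of the embedding into $\tl_0$ its underlying vector is again $\nu_{\pp,\qq}$, and since $\pp\in\Km(\Dtil,X_0,\qq)\subseteq\Km(\Dtil,X_0',\qq)$ this cocharacter is by definition $\nu^{\hssh\prime}_{\pp,\qq}$. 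Evaluating at $-1$ and using that $\varphi$ is a homomorphism gives $\varphi\bigl(\nu_{\pp,\qq}(-1)\bigr)=\nu^{\hssh\prime}_{\pp,\qq}(-1)\in (T_0')_2$.

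Finally I would invoke functoriality of Galois cohomology. Since $\varphi\colon\GG\to\GG'$ is a morphism of $\R$-groups it commutes with the real structures, hence maps $\Zl1(\R,\GG)$ into $\Zl1(\R,\GG')$ by $a\mapsto\varphi(a)$, and the induced map $\varphi_*$ on $\Ho1$ is $[a]\mapsto[\varphi(a)]$; see Serre \cite[Section~I.5.3]{Serre}. By Main Theorem~\ref{t:main-ss} applied to $\GG$ and to $\GG'$, the cocycle $\nu_{\pp,\qq}(-1)$ represents $[\nu_{\pp,\qq}(-1)]\in\Ho1(\R,\GG)$ and $\nu^{\hssh\prime}_{\pp,\qq}(-1)\in (T_0')_2\subseteq\Zl1(\R,\GG')$ represents $[\nu^{\hssh\prime}_{\pp,\qq}(-1)]\in\Ho1(\R,\GG')$. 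Combining with the previous paragraph, $\varphi_*[\nu_{\pp,\qq}(-1)]=[\varphi(\nu_{\pp,\qq}(-1))]=[\nu^{\hssh\prime}_{\pp,\qq}(-1)]$, which is the assertion.

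The only real content — and the point I would be most careful about — is the compatibility claimed in the first paragraph: one must check that the auxiliary data (maximal torus $\TT$, Borel $B$, pinning $(\e\alpha)_{\alpha\in\Sm}$, and the element $\tto$) used to present $\GG$ and $\GG'$ as $\GG(D,F,\tau,\qq)$ and $\GG(D,F',\tau,\qq)$ can be, and have been, chosen so that $\varphi(\TT)=\TT'$, $\varphi(B)=B'$, $\varphi$ carries the pinning of $G$ to that of $G'$, and $\varphi(\tto)=\tto'$. Granting the setup of the preceding subsection — the \emph{same} $D$, $\tau$, $\qq$ for both groups, obtained from compatible choices — this is routine bookkeeping with the constructions of Sections~\ref{s:prelim-complex}--\ref{s:sqroots}; once it is in place the statement is immediate, because on Lie algebras $\varphi$ is an isomorphism that we use simply as an identification.
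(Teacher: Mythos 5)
Your proof is correct and is essentially the unique natural argument here: the paper itself dismisses the proposition with the single word ``Obvious,'' and your write-up just spells out why --- the isogeny identifies Lie algebras, so $\varphi\circ\nu_{\pp,\qq}=\nu^{\hssh\prime}_{\pp,\qq}$ as cocharacters, and functoriality of $\Ho1$ does the rest. The compatibility of auxiliary choices you flag at the end is already built into the setup preceding the proposition (where $\TT'=\varphi(\TT)$, $\TT_0'=\varphi(\TT_0)$, and both groups are written with the same $D$, $\tau$, $\qq$), so nothing further is needed.
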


\begin{proof} Obvious. \end{proof}

\section{Example: real forms of $\EEE_7$}
\label{s:E7}

Let $\GG$ be the simply connected compact group $\GG$ of type $\EEE_7$.
In the figure below, on the left we give the extended Dynkin diagram $\Dtil$ of $G$
with the numbering of vertices of Onishchik and Vinberg \cite[Table 7, Type I]{OV}.
We paint the lowest root in black.
On the right we give $\Dtil$  with the coefficients $m_i$
from \cite[Table 6]{OV}; see~\eqref{e:sum-mj}.

We have $X=P$, and $P/Q\simeq \Z/2\Z$ is generated by $[\lambda]$ for
\begin{equation}\label{e:lambda-E7}
\lambda=\half(\alpha_1+\alpha_3+\alpha_7),
\end{equation}
see \cite[Table 3]{OV}.
In the  diagram on the right we paint in black the roots appearing
(with  non-integer  half-integer coefficients) in formula \eqref{e:lambda-E7}:

\tikzset{/Dynkin diagram,mark=o,affine mark=*,
edge length=0.75cm,arrow shape/.style={-{angle 90}}}

\begin{align*}
&\dynkin[%
edge length=0.75cm,
backwards,
upside down,
affine mark=*,
labels={0,6,7,5,4,3,2,1},
label directions={,,left,,,,,}%
]%
E[1]{7}
\qquad\qquad
&\dynkin[%
edge length=0.75cm,
backwards,
upside down,
affine mark=o,
labels={1,2,2,,,,2,1},
label directions={,,left,,,,,}%
]
E[1]{o*oo*o*}
\end{align*}

The Kac labelings of $\Dtil$ are:
\begin{align*}
\qq^{(1)}&=000\two{0}{0}002\qquad \qq^{(2)}=200\two{0}{0}000 &
\qq^{(3)}&=100\two{0}{0}001 \\
\qq^{(4)}&=000\two{0}{0}010\qquad \qq^{(5)}=010\two{0}{0}000 &
\qq^{(6)}&=000\two{0}{1}000\,.
\end{align*}

Since all automorphisms of $G$ are inner,
the real forms of $\EEE_7$ correspond to the elements of $\Ho1(\R, \GG^\ad)$,
and by Corollary \ref{c:Kac} they correspond
to the orbits of $C=P^\vee/Q^\vee\simeq\Z/2\Z$ in the set  $\Km(\Dtil)$ of Kac labelings of $\Dtil$.
These orbits  are:
$$\{\qq^{(1)}, \qq^{(2)}\},\quad \{\qq^{(3)}\},\quad \{\qq^{(4)}, \qq^{(5)}\},\quad \{\qq^{(6)}\},$$
hence $\# \Ho1(\R,\GG^\ad)=4$.
We write $\GG_\qq$ for the real form of $\GG$ defined by the Kac labeling $\qq$.
In particular,  the compact form corresponds to $\qq^{(1)}$.

Concerning $\Ho1(\R,\GG_\qq)$, condition \eqref{e:cond-q} defining $\Km^{(\qq)}:=\Km(\Dtil,P,\qq)$ reads as
\begin{equation*}
\half(p_1+p_7) \equiv \half(q_1+q_7) \pmod{\Z},
\end{equation*}
(note that $p_3=0$ and $q_3=0$),
which is equivalent to
\begin{equation*}
p_1+p_7 \equiv q_1+q_7 \pmod{2}.
\end{equation*}
We say that a labeling $\pp\in\Km$ is {\em even} (resp.\ {\em odd})
if the sum over the black vertices
\[p_1+p_7\]
is even (resp.\ odd).
Then $\Km^{(\qq)}$ is the set of the labelings $\pp\in \Km$ of the same parity as $\qq$.
By Corollary~\ref{c:main-sc},
the first Galois cohomology set $\Ho1(\R, \GG_\qq)$
is in a bijection with the set $\Km^{(\qq)}$.

For $\GG_\qq=\EEE_{7(-133)}$ (the compact form) we take $\qq=\qq^{(1)}$.
For $\GG_\qq={\bf EVI}=\EEE_{7(-5)}$ we take $\qq=\qq^{(5)}$; see \cite[Table 7]{OV}.
Both labelings $\qq^{(1)}$ and $\qq^{(5)}$ are even.
We see that in both cases the set  $\Km^{(\qq)}$ is the set
$$\Km^\even=\{\qq^{(1)},\qq^{(2)},\qq^{(4)},\qq^{(5)}\}$$
of all {\em even} labelings of $\Dtil$.
The set $\Ho1(\R,\GG_\qq)$ is in a bijection with the set $\Km^\even$.
In particular, $\#\Ho1(\R,\GG_\qq)=4$ in both the compact case and ${\bf EVI}$.

For $\GG_\qq={\bf EV}=\EEE_{7(7)}$ (the split form) we take $\qq=\qq^{(6)}$,
and for $\GG_\qq={\bf EVII}=\EEE_{7(-25)}$ (the Hermitian form)
we take $\qq=\qq^{(3)}$; see \cite[Table 7]{OV}.
Both labelings $\qq^{(6)}$ and $\qq^{(3)}$ are odd.
In both cases the set  $\Km^{(\qq)}$ is the set
$$\Km^\odd=\{\qq^{(3)},\qq^{(6)}\}$$
of all {\em odd} labelings of $\Dtil$.
The set $\Ho1(\R, \GG_\qq)$ is in a bijection with the set $\Km^\odd$.
In particular,  $\#\Ho1(\R, \GG_\qq)=2$ in both cases ${\bf EV}$ and ${\bf EVII}$.

In each case the element $\qq\in\Km^{(\qq)}$ corresponds to the neutral element of $\Ho1(\R,\GG_\qq)$.

Note that  $\Ho1(\R,\EEE_{7(-5)})$ was earlier computed
by Garibaldi and Semenov \cite[Example 5.1]{GS},
and  $\Ho1(\R,\EEE_{7(7)})$ was earlier computed by Conrad \cite[Proof of Lemma 4.9]{Conrad}.
The cardinalities of  $\Ho1(\R,\GG)$ for all simply connected absolutely simple groups $\GG$,
in particular, for all four real forms of the simply connected group of type $\EEE_7$,
were computed by Adams and Ta\"\i bi \cite{AT} and also by Borovoi and Evenor \cite{BE}.

\section{Example: half-spin groups}
\label{s:half-spin}

Let $\GG$ be the compact group of type $\DDD_\ell$
 with even $\ell\ge 4$ with the cocharacter lattice
\[ X^\vee =\langle Q^\vee,\omega_{\ell-1}^\vee\rangle. \]
This compact group is neither simply connected nor adjoint,
and it is  isomorphic to $\SO_{2\ell}$ only if  $\ell=4$.
It is called a half-spin group.

In the figure below, on the left we give the extended Dynkin diagram $\Dtil$ of $G$
with the numbering of vertices of
Onishchik and Vinberg \cite[Table 7, Type I]{OV}. We paint the lowest root in black.
On the right we give $\Dtil$
with the coefficients $m_i$ from \cite[Table 6]{OV}; see \eqref{e:sum-mj}.

We show that the character lattice $X$  is generated by $Q$ and the weight
\begin{equation}\label{e:X-alphas}
 \lambda:=(\alpha_1+\alpha_3+\alpha_5+\dots+\alpha_{\ell-3}+\alpha_\ell)/2.
\end{equation}
Indeed, $\langle\lambda,\omega^\vee_{\ell-1}\rangle=0$
and $\langle\lambda,\alpha^\vee\rangle=0,1,-1\in\Z$ for any $\alpha\in\Sm$.
Since $\omega^\vee_{\ell-1}$ and the coroots  $\alpha^\vee$ generate $X^\vee$, we
see that $\lambda\in X$.
Since $\lambda\notin Q$ and $[X:Q]=2$, we conclude that $X=\langle Q,\lambda\rangle$.
In the diagram on the right we paint in black the roots
that appear (with non-integer half-integer coefficients)
in formula \eqref{e:X-alphas} for $\lambda$.

\tikzset{/Dynkin diagram,mark=o,
edge length=0.75cm,arrow shape/.style={-{angle 90}}}

\begin{align*}
\dynkin[%
labels*={,,2,3,\ell-3,,,},
labels={0,1,,,,\ell-2,\ell-1,\ell},
label directions={,,,,,right,,}
]
D[1]{}
\qquad\qquad\qquad
\dynkin[edge length=0.75cm,
affine mark=o,
arrow shape/.style={-{angle 90}},
labels={1,1,,,,,,1,1},
labels*={,,2,2,2,2,2,,}
]
D[1]{*o*o.*oo*}
\end{align*}

Let $\pp$ be a Kac labeling of $\Dtil$.
We say that $\pp$ is {\em even} (resp., {\em odd}) if the sum over the black vertices
\[p_1+p_3+p_5+\dots+p_{\ell-3}+p_\ell\]
is even (resp., odd).
If $\qq\in\Km(\Dtil)$, then by \eqref{e:cond-q} the set
$\Km^{(\qq)}:=\Km(\Dtil,X,\qq)$ consists of all
Kac labelings of the same parity as $\qq$.

The group $F=X^\vee/Q^\vee=\{\hs 0,\hs[\omega_{\ell-1}^\vee]\hs\}$ acts on $\Dtil$
and on $\Km(\Dtil)$.
The nontrivial element $[\omega_{\ell-1}^\vee]\in F$
acts as the reflection with respect to the vertical  axis of symmetry of $\Dtil$
(see Subsection \ref{ss:C-action}) and clearly preserves the parity of labelings.
We say that an $F$-orbit in $\Km$ is even (resp., odd),
if it consists of even (resp., odd) labelings.

Let $\qq$ be a Kac labeling of $\Dtil$.
By Main Theorem \ref{t:main-ss},
the cohomology set $\Ho1(\R,\GG_\qq)$ is in a  bijection with the set $\Km^{(\qq)}/F$,
that is, with the set of $F$-orbits in $\Km$ of the same parity as $\qq$.
Thus in order to compute  $\Ho1(\R,\GG_\qq)$ for all labelings $\qq$ of $\Dtil$, it suffices to compute
the sets of the even and odd $F$-orbits, respectively.
We compute also the cardinalities
$h^\even(\DDD_\ell)$ and $h^\odd(\DDD_\ell)$ of these sets.

For representatives of even $F$-orbits we take
\begin{align*}
&\gf{1}{0}0 \cdots 0\gf{1}{0}, \qquad
\gf{0}{1}0 \cdots 0\gf{0}{1}, \qquad
\gf{2}{0}0 \cdots 0\gf{0}{0}, \qquad
\gf{0}{2}0 \cdots 0\gf{0}{0}, \qquad
\text{and} \\& \\
&\gf{0}{0}0 \cdots 1 \cdots 0\gf{0}{0} \quad
\text{with $1$ at $2j$ for each integer $j$ with $1<2j\le \ell/2$.}
\end{align*}
Thus
\[h^\even(\DDD_{\ell})=\lfloor \ell/4\rfloor+4.\]

For representatives of odd $F$-orbits we take
\[
\gf{1}{1}0 \cdots 0 \gf{0}{0}, \qquad\quad
\gf{1}{0}0 \cdots 0 \gf{0}{1}, \qquad\quad
\text{and} \quad
\gf{0}{0}0 \cdots 1 \cdots 0\gf{0}{0} \quad
\text{with $1$ at $2j+1$}
\]
for each integer $j$ with $1<2j+1\le \ell/2$.
Thus
\[ h^\odd(\DDD_{\ell})=\lceil \ell/4\rceil+1.\]

We conclude that if $\qq$ is an even labeling, then $\#\Ho1(\R,\GG_\qq)=\lfloor \ell/4\rfloor+4$,
while  if $\qq$ is an odd labeling, then $\#\Ho1(\R,\GG_\qq)=\lceil \ell/4\rceil+1$.

Note that if $\ell>4$, then our compact half-spin group $\GG$ has no outer automorphisms;
hence all its real forms are {\em inner}  forms,
and we have computed the Galois cohomology for all real forms of $\GG$.

Note also that for the compact half-spin group $\GG$ we have
\[ \# \Ho1(\R,\GG)=h^\even(\DDD_{\ell})=\lfloor \ell/4\rfloor+4. \]
For comparison, $\#\Ho1(\R,\SO_{2\ell})=\ell+1$.
We have $\lfloor \ell/4\rfloor+4=\ell+1$ for an even natural number $\ell$ if and only if $\ell=4$.
(In this case, because of triality, our half-spin group $\GG$ is isomorphic to $\SO_8$.)

\section{Example: an almost direct product of $\EEE_7$ and $\SL_{m,{\Hquat}}$.}
\label{s:E7xSL(m,H)}

Let $\GG^\ssc=\EEE_7\times\SL_{m,{\Hquat}}$, where by $\EEE_7=\EEE_{7(-133)}$
we denote the compact simply connected $\R$-group of type $\EEE_7$,
and $\SL_{m,{\Hquat}}$ is the quaternionic real form of $\SL_{2m}$.
Let $\mu_{2m}$ denote the cyclic group generated by a primitive $2m$-th root of unity~$\zeta_{2m}$.
We identify $\mu_{2m}$ with the center of $\SL_{2m}(\C)$
consisting of scalar matrices, and embed $\mu_{2m}$ into $Z(G^\ssc)$
by sending $\zeta_{2m}$ to $(-1,\zeta_{2m})$, where $-1$
denotes the nontrivial central element of $\EEE_7$.
We set $\GG=\GG^\ssc/\mu_{2m}$\hs. Then $\GG$ is a double cover
of the adjoint group $\EEE_7^\ad\times\PGL_{m,{\Hquat}}$.
We wish to compute the Galois cohomology for all inner forms of $\GG$
(which are outer forms of a compact form of $\GG$).

By Corollary \ref{c:Kac}, the inner forms of $\GG$ are classified
by Kac labelings $\qq$ of the diagram $\Dtil=\Dtil_\eee\sqcup\Dtil_\aaa$,
where $\Dtil_\eee$ is as in Section \ref{s:E7}
and $\Dtil_\aaa$ is given at the figure below.
We denote various objects (restricted roots, Kac labelings, \dots) related to
$\Dtil_\eee$ or $\Dtil_\aaa$ by the subscripts (or superscripts)
$\eee$ and $\aaa$, respectively.

On the left-hand side of the figure below we give the twisted affine Dynkin diagram $\Dtil_\aaa$
of $\SL_{m,{\Hquat}}$ with the numbering of vertices of \cite[Table 7, Type III]{OV}.
We paint the root $\alpha^\aaa_0$ in black.
On the right-hand side we give $\Dtil_\aaa$ with the coefficients $m_i$
from the Table in \cite[Section 3.3.9]{OV2}; see \eqref{e:mj-outer}.

The group $X_0/Q_0\simeq \Z/2\Z$ is generated by $[\lambda]$ with
\begin{equation}\label{e:lambda-E7-A}
\lambda=\half(\alpha^\eee_1+\alpha^\eee_3+\alpha^\eee_7+\alpha^\aaa_m).
\end{equation}
In the diagram on the right we paint in black the root $\alpha^\aaa_m$ appearing
(with  non-integer  half-integer coefficient) in formula \eqref{e:lambda-E7-A}:

\tikzset{/Dynkin diagram,mark=o,affine mark=*,
edge length=0.75cm,arrow shape/.style={-{angle 90}}}

\begin{align*}
\dynkin[%
edge length=0.75cm,
arrow shape/.style={-{angle 90}},
labels={0,1,2},
labels*={,,,3,4,,m-1,m}
]
A[2]{odd}
\qquad\qquad\qquad
\dynkin[%
affine mark=o,
labels={2,2,,,,,,},
labels*={,,,,,,,2},
odd
]
A[2]{oooo.oo*}
\end{align*}

The Kac labelings $\qq_\eee^{(1)},\dots,\qq_\eee^{(6)}$ of $\Dtil_\eee$
are written down in Section~\ref{s:E7}.
The labelings $\qq_\eee^{(1)}$, $\qq_\eee^{(2)}$, $\qq_\eee^{(4)}$, and $\qq_\eee^{(5)}$ are even,
while the labelings $\qq_\eee^{(3)}$ and $\qq_\eee^{(6)}$ are odd.

We write down the Kac labelings of $\Dtil_\aaa$:
\[
\qq_\aaa^{(1)}=   \gf{1}{0}0 \cdots  00, \qquad \quad
\qq_\aaa^{(2)}=   \gf{0}{1}0 \cdots 00, \qquad \quad
\qq_\aaa^{(3)}=   \gf{0}{0}0 \cdots 01.
\]
The labelings $\qq_\aaa^{(1)}$ and $\qq_\aaa^{(2)}$ are even
(that is, the labels $\qq_{\aaa,m}^{(1)}$ and $\qq_{\aaa,m}^{(2)}$ are even),
while the labeling $\qq_\aaa^{(3)}$ is odd.

We have $\#\Km(\Dtil_\eee)=6$, $\#\Km(\Dtil_\aaa)=3$;
hence $\#\Km(\Dtil)=6\cdot 3=18$.

We say that a labeling $\qq=(\qq_\eee,\qq_\aaa)$
is \emph{even} (resp.\ \emph{odd}) if the sum
\[q^\eee_1+q^\eee_3+q^\eee_7+q^\aaa_m\]
is even (resp.\ odd).
Clearly, $\qq=(\qq_\eee,\qq_\aaa)$ is even if and only if
either both labelings $\qq_\eee$ and $\qq_\aaa$ are even or they both are odd.
By \eqref{e:cond-q} the set $\Km(\Dtil,X_0,\qq)$
consists of all Kac labelings of the same parity as $\qq$.

The group $F_0=\wt X_0^\vee/\wt Q_0^\vee$ is of order 2 with the nontrivial element
acting on the diagrams $\Dtil_\eee$ and $\Dtil_\aaa$
by the only nontrivial automorphisms of these diagrams,
respectively. This action clearly preserves the parity  of $\qq=(\qq_\eee,\qq_\aaa)$.
We say that an orbit of $F_0$ in $\Km(\Dtil)$ is even (resp.\ odd)
if it consists of even (resp.\ odd) labelings.

We write down all even $F_0$-orbits in $\Km(\Dtil)$:
\begin{align*}
&(\qq_\eee^{(1)},\qq_\aaa^{(1)})\longleftrightarrow (\qq_\eee^{(2)},\qq_\aaa^{(2)}),
    \qquad &(\qq_\eee^{(1)},\qq_\aaa^{(2)})\longleftrightarrow (\qq_\eee^{(2)},\qq_\aaa^{(1)}),\\
&(\qq_\eee^{(4)},\qq_\aaa^{(1)})\longleftrightarrow (\qq_\eee^{(5)},\qq_\aaa^{(2)}),
   \qquad &(\qq_\eee^{(4)},\qq_\aaa^{(2)})\longleftrightarrow (\qq_\eee^{(5)},\qq_\aaa^{(1)}),\\
&(\qq_\eee^{(3)},\qq_\aaa^{(3)}),  &(\qq_\eee^{(6)},\qq_\aaa^{(3)}).
\end{align*}

We write down all  odd $F_0$-orbits in $\Km(\Dtil)$:
\begin{align*}
&(\qq_\eee^{(1)},\qq_\aaa^{(3)})\longleftrightarrow (\qq_\eee^{(2)},\qq_\aaa^{(3)}),
     \qquad &(\qq_\eee^{(4)},\qq_\aaa^{(3)})\longleftrightarrow (\qq_\eee^{(5)},\qq_\aaa^{(3)}),\\
&(\qq_\eee^{(3)},\qq_\aaa^{(1)})\longleftrightarrow (\qq_\eee^{(3)},\qq_\aaa^{(2)}),\qquad
&(\qq_\eee^{(6)},\qq_\aaa^{(1)})\longleftrightarrow (\qq_\eee^{(6)},\qq_\aaa^{(2)}).
\end{align*}

We see that there are six even $F_0$-orbits and four odd $F_0$-orbits in $\Km(\Dtil)$.

Now let $\GG_\qq$ denote the twisted form of our group $\GG=(\EEE_7\times\SL_{m,{\Hquat}})/\mu_{2m}$
corresponding to the Kac labeling $\qq=(\qq_\eee,\qq_\aaa)$.
By Main Theorem \ref{t:main-ss}, if $\qq$ is even, then $\Ho1(\R,\GG_\qq)$
is in a bijection with the set of even $F_0$-orbits in $\Km(\Dtil)$ and hence $\#\Ho1(\R,\GG_\qq)=6$.
If $\qq$ is odd, then $\Ho1(\R,\GG_\qq)$ is in a bijection
with the set of odd $F_0$-orbits in $\Km(\Dtil)$ and hence $\#\Ho1(\R,\GG_\qq)=4$.


\begin{thebibliography}{22}

\bibitem{AT}
J. Adams, O. Ta{\"\i}bi,
{\em Galois and Cartan cohomology of real groups},
Duke Math. J. {\bf 167} (2018), no. 6, 1057--1097.

\bibitem{Borel-66}
A. Borel,
{\em  Linear algebraic groups},
in: \emph{Algebraic Groups and Discontinuous Subgroups}, Proc. Sympos. Pure Math., Vol. 9, Amer. Math. Soc., Providence, R.I., 1966, pp. 3--19.

\bibitem{Bo}
M. Borovoi, {\em Galois cohomology of real reductive groups
and real forms of simple Lie algebras}, Funct. Anal. Appl.
{\bf 22} (1988), no. 2, 135--136.

\bibitem{Borovoi-Memoir}
M. Borovoi,
{\em Abelian Galois cohomology of reductive groups},
Mem. Amer. Math. Soc. {\bf 132} (1998), no. 626.

\bibitem{Borovoi-arXiv}
M. Borovoi,
{\em Galois cohomology of reductive algebraic groups over the field of real numbers},
{\tt arXiv:1401.5913 [math.GR]}.

\bibitem{BE}
M. Borovoi, Z. Evenor,
{\em Real homogeneous spaces, Galois cohomology, and Reeder puzzles}, J. Algebra {\bf 467} (2016), 307--365.

\bibitem{BKLR}
M. Borovoi,  B. Kunyavski\u\i,  N. Lemire,  Z. Reichstein,
{\em Stably Cayley groups in characteristic zero},
Int. Math. Res. Not. IMRN {\bf 2014}, no. 19,  5340--5397.

\bibitem{Bourbaki}
N. Bourbaki,
{\em Lie Groups and Lie Algebras, Chapters 4--6}, Elements of Mathematics, Springer-Verlag, Berlin, 2002.

\bibitem{Conrad-Reductive}
B. Conrad,
{\em Reductive group schemes}, in:
\emph{Autour des sch\'emas en groupes}, Vol. I,
Panor. Synth\`eses \textbf{42/43}, Soc. Math. France, Paris, 2014, pp.~93--444.

\bibitem{Conrad}
B. Conrad, {\em Non-split reductive groups over $\mathbf{Z}$}, in:
\emph{Autour des sch\'emas en groupes}, Vol. II,
Panor. Synth\`eses \textbf{46}, Soc. Math. France, Paris, 2015, pp.~193--253.

\bibitem{Cornulier}
Y. Cornulier
(\url{https://mathoverflow.net/users/14094/ycor}),
{\em Anti-holomorphic involutions of a complex linear algebraic group}, MathOverflow,
\url{https://mathoverflow.net/q/342328} (version: 2019-09-24).

\bibitem{Gant}
F. Gantmacher,
{\em Canonical representation of automorphisms of a complex semi-simple Lie group},
Rec. Math. [Mat. Sbornik] N.S. {\bf  5(47)} (1939), 101--146.

\bibitem{GS}
S. Garibaldi, N. Semenov,
{\em Degree $5$ invariant of $E_8$}, Int. Math. Res. Not. IMRN {\bf 2010}, no. 19, 3746--3762.

\bibitem{OV2}
 V. V. Gorbatsevich, A. L. Onishchik,  E. B. Vinberg,
{\em Structure of Lie groups and Lie algebras},
{Lie Groups and Lie Algebras III,}
Encyclopaedia of Mathematical Sciences, Vol. 41,
Springer-Verlag,  Berlin, 1994.

\bibitem{Humphreys}
J. E.  Humphreys,
{\em  Linear Algebraic Groups}, Graduate Texts in Mathematics,
Vol. 21, Springer-Verlag, New York, 1975.

\bibitem{Jahnel}
J. Jahnel, {\em The Brauer-Severi variety associated with a central simple algebra:
a survey}, Preprint server: Linear Algebraic Groups and Related Structures, no. 52, 2000,
\url{https://www.math.uni-bielefeld.de/LAG/man/052.pdf}.

\bibitem{Kac0}
V. G. Kac,
{\em Graded Lie algebras and symmetric spaces},
Funct. Anal.  Appl. {\bf 2} (1968), no. 2, 182--183.

\bibitem{Kac}
V. G. Kac,
{\em Automorphisms of finite order of semisimple Lie algebras},
Funct. Anal.  Appl. {\bf 3} (1969),  no. 3, 252--254.

\bibitem{Milne}
J. S. Milne,
{\em Algebraic Groups.
The Theory of Group Schemes of Finite Type over a Field},
Cambridge Studies in Advanced Mathematics, Vol. 170,
Cambridge University Press, Cambridge, 2017.

\bibitem{OV}
A. L. Onishchik, E. B. Vinberg,
{\em Lie Groups and Algebraic Groups},
Springer Series in Soviet Mathematics, Springer-Verlag, Berlin, 1990.

\bibitem{Sansuc}
J.-J. Sansuc,
{\em Groupe de Brauer et arithm\'etique des groupes alg\'ebriques lin\'eaires sur un corps de nombres},
J. Reine Angew. Math. {\bf 327} (1981), 12--80.

\bibitem{Serre-AGCF}
J.-P. Serre,
{\em Algebraic Groups and Class Fields},
Graduate Texts in Mathematics, Vol. 117, Springer-Verlag, New York, 1988.

\bibitem{Serre}
J.-P. Serre,
{\em Galois Cohomology}, Springer-Verlag, Berlin, 1997.

\bibitem{Springer-Corvallis}
T. A. Springer, \emph{Reductive groups}, in: \emph{Automorphic Forms, Representations and {$L$}-functions},
Proc. Sympos. Pure Math., Vol. 33, Part 1, Amer. Math. Soc., Providence, R.I., 1979, pp.~3--27.

\bibitem{Timashev}
D. A. Timashev, \textit{Homogeneous Spaces and Equivariant Embeddings},
Invariant Theory and Algebraic Transformation Groups VIII,
Encyclopaedia of Mathematical Sciences, Vol. 138, Springer-Verlag, Berlin, 2011.

\end{thebibliography}
\end{document}